\theoremstyle{plain}
\newtheorem{thm}{Theorem}
\newtheorem{prop}{Proposition}
\newtheorem{lem}[prop]{Lemma}
\newtheorem{cor}[prop]{Corollary}
\theoremstyle{definition}
\newtheorem{df}[prop]{Definition}
\newtheorem{claim}{Claim}
\newtheorem*{ex}{Example}
\newtheorem*{assumptions}{Assumptions}
\theoremstyle{remark}
\newtheorem*{rk}{Remark}
\newcommand{\arcd}{\ar@{-}@/_/} 
\newcommand{\arcu}{\ar@{-}@/^/} 
\newcommand{\tra}{\ar@{-}} 
\newcommand{\xysmall}{\xymatrixrowsep{5pt}\xymatrixcolsep{10pt}\xymatrix} 
\newcommand{\xymini}{\xymatrixrowsep{5pt}\xymatrixcolsep{7pt}\xymatrix}
\newcommand{\xytiny}{\xymatrixrowsep{1pt}\xymatrixcolsep{3pt}\xymatrix}
\renewcommand{\top}{\text{top}}
\newcommand{\bottom}{\text{bottom}}
\newcommand{\tensorel}{\underset{\scriptscriptstyle e_lAe_l}{\otimes}}
\newcommand{\tensorover}[1]{\underset{\scriptscriptstyle #1}{\otimes}}
\renewcommand{\mod}{\text{\textnormal{mod}}} 
\newcommand{\Hom}{\text{\textnormal{Hom}}}
\newcommand{\Ext}{\text{\textnormal{Ext}}} 
\newcommand{\End}{\text{\textnormal{End}}}
\renewcommand{\th}{\text{\textnormal{\small th}}\hspace*{.6ex}}
\begin{document}

\title{Permutation modules for cellularly stratified algebras}
\author{Inga Paul}
\address{Institut f\"ur Algebra und Zahlentheorie, Universit\"at Stuttgart}
\email{inga.paul@mathematik.uni-stuttgart.de}

\date{\today}

\begin{abstract}
Permutation modules play an important role in the representation theory of the symmetric group. Hartmann and Paget defined permutation modules for non-degenerate Brauer algebras. We generalise their construction to a wider class of algebras, namely cellularly stratified algebras, satisfying certain conditions. Partition algebras are shown to satisfy these conditions, provided the characteristic of the underlying field is large enough. Thus we obtain a definition of permutation modules for partition algebras. 
\end{abstract}

\maketitle

\textbf{Keywords.} cellular algebras, permutation modules, Young modules, partition algebras\\
 
\section{Introduction}
The Specht modules $S^\lambda$ are cornerstones of the representation theory of symmetric groups $\Sigma_r$. In characteristic zero, they form a complete set of simple modules (\hspace*{-3pt}\cite[Theorem 3]{Jirred}). In arbitrary characteristic $p$, the simple modules occur as top quotients $S^\lambda / S^\lambda \cap {S^\lambda}^\perp$ of Specht modules, in case $\lambda$ is a $p$-regular partition\footnote{For $p$-singular partitions $\lambda$, $S^\lambda / S^\lambda \cap {S^\lambda}^\perp$ is zero.} of $r$ (\hspace*{-3pt}\cite[Theorem 2]{Jirred}). In the more general case of cellular algebras, introduced by Graham and Lehrer \cite{GL} in 1996, the cell modules $\Theta(\lambda)$ take the role of Specht modules $S^\lambda$ or their duals $S_\lambda$. 

Another cornerstone in the representation theory of symmetric groups are the permutation modules $M^\lambda = k\Sigma_r \tensorover{k\Sigma_\lambda} k$. By James' Submodule Theorem (\hspace*{-3pt}\cite[Theorem 1]{Jirred}), $M^\lambda$ has a unique direct summand $Y^\lambda$, called Young module, containing $S^\lambda$ as a submodule. Since Young modules are self-dual (cf. \cite[2.2.1 (b)]{ErdSchurFinalType}), $Y^\lambda$ can also be characterised as the only direct summand of $M^\lambda$ with quotient $S_\lambda$. Young modules for different partitions are non-isomorphic (\hspace*{-3pt}\cite[Theorem 3.1 (iii)]{Jtriv}). All direct summands of $M^\lambda$ are Young modules $Y^\mu$, with $\mu \leq \lambda$ and $Y^\lambda$ appears exactly once (\hspace*{-3pt}\cite[Theorem 3.1 (i)]{Jtriv}). \\

Cellularly stratified algebras, introduced by Hartmann, Henke, K\"onig and Paget (\hspace*{-3pt}\cite{HHKP}) in 2010, are cellular algebras with additional structure. The aim of this article is to generalise the well-known results about permutation modules for symmetric groups to cellularly stratified algebras containing group algebras of symmetric groups, or their Hecke algebras, as subalgebras. Young modules for cellularly stratified algebras have already been used in \cite{HHKP}. They were defined abstractly via iterated universal extensions.  While this definition is useful for theoretical considerations, the construction of iterated universal extensions might be hard in examples. Extending the construction of Young modules for Brauer algebras of Hartmann and Paget (\hspace*{-3pt}\cite{HP}), we present an explicit construction of Young modules (Theorem \ref{def: Y}), which coincides, under additional assumptions stated in Section \ref{assumptions}, with the abstract definition in \cite{HHKP} (Corollary \ref{cor: young modules coincide}). This provides new proofs for some results of \cite{HHKP}, e.g. a method of finding all indecomposable (relative) projective modules (\hspace*{-3pt}\cite[Proposition 12.3]{HHKP}) and Schur-Weyl duality (\hspace*{-3pt}\cite[Theorem 13.1]{HHKP}). The fact that two Young modules with different indices are non-isomorphic follows from the construction (Corollary \ref{cor: Young modules not isomorphic}). \\

The structural main result of this article is the decomposition of permutation modules $M(l,\lambda)$ into Young modules $Y(m,\mu)$ (Theorem \ref{main thm}). In order to decompose permutation modules for symmetric groups, James used Schur algebras via Schur-Weyl duality and PIMs. There is a Schur-Weyl duality between cellularly stratified algebras and certain quasi-hereditary algebras, which can be regarded as Schur algebras associated to the cellularly stratified algebras, by \cite[Theorem 13.1]{HHKP}.  

Our homological main result is to show that the Young modules $Y(l,\lambda)$ admit filtrations by cell modules (Theorem \ref{thm: cell filtration}) and are relative projective in the category $\mathcal{F}(\Theta)$ of modules admitting cell filtrations (Theorem \ref{thm: relative projective}). These statements hold provided the cellularly stratified algebra satisfies the additional assumptions stated in Section \ref{assumptions}. This generalises a result from Hemmer and Nakano \cite[Proposition 4.1.1]{HN} for Hecke algebras and enables us to prove the analogue of James' theorem on the decomposition of permutation modules.  \\

This article was inspired by the results of Hartmann and Paget \cite{HP} for Brauer algebras. We apply the theory developed here to Brauer algebras (Section \ref{sec: applications Brauer algebras}) and recover their results (Theorem \ref{thm: Brauer algebras}), thus providing new proofs. 

Further applications to partition algebras (Section \ref{sec: applications partition algebras}) show that, provided the characteristic of the field is large enough, we can construct permutation modules for partition algebras with the desired properties (Theorem \ref{thm: partition algebras}). In order to have the homological Hemmer-Nakano-type results, we need filtrations of restrictions of cell modules to symmetric groups (\hspace*{-3pt}\cite[Theorem 1]{arxiv}) and filtrations of restrictions of permutation modules to symmetric groups. In Proposition \ref{prop: partition algebras} we show that the restriction of a permutation module to a group algebra of a symmetric group is isomorphic to a direct sum of permutation modules over this symmetric group. You can find an example (\ref{appendix: example}) and a \texttt{GAP} algorithm (\ref{appendix: code}) to compute the occurring permutation modules in the Appendix.

The approach fails for BMW algebras, the third main example for cellularly stratified algebras in \cite{HHKP}, since the appearing Hecke algebras are not subalgebras of BMW algebras. However, this is satisfied for $q$-Brauer algebras, another deformation of Brauer algebras, and there is hope that the theory applies in this case. 

\section{Preliminaries}\label{sec: preliminaries}
Let $k$ be an algebraically closed field, $r$ a natural number and $A$ an associative $k$-algebra. We denote the symmetric group on $r$ letters by $\Sigma_r$; its Iwahori-Hecke algebra is denoted by $\mathcal{H}_{k,q}(\Sigma_r)$, for some unit $q \in k$. Let $h$ be the smallest integer such that $\sum\limits_{i=0}^{h-1} q^i =0$. If $q=1$, then $h=\text{\textnormal{char}}k$. If $q$ is an $n$\th root of unity, then $h=n$.  

\begin{df}[\hspace*{-3pt}\cite{HHKP}, Definition 2.1]
An algebra $A$  is called \emph{cellularly stratified} if the following holds.
\begin{enumerate}
\item For each $l=0,...,r$ there is a cellular algebra $B_l$ and a vector space $V_l$ such that $A= \bigoplus\limits_{l=0}^r B_l \otimes_k V_l \otimes_k V_l$ as a vector space, respecting within each layer the multiplication of $A$, i.e. $A$ is an iterated inflation of the cellular algebras $B_l$ along the vector spaces $V_l$ as defined in \cite{KXinflation}. 
\item For all $l=0,...,r$ there are elements $u_l,v_l \in V_l\setminus\{0\}$ such that $e_l:=1_{B_l}\otimes u_l \otimes v_l$ is an idempotent and $e_le_{l'} = e_l = e_{l'}e_l$ for all $l' \geq l$. 
\end{enumerate}
The tuple $(B_0,V_0,...,B_r,V_r)$ is called \emph{stratification data} of $A$. 
\end{df}

It follows from the first part of the definition that $A$ is cellular with a chain of two-sided ideals $$0 \subseteq J_1 \subseteq ... \subseteq J_r = A$$ such that $J_l/J_{l-1} = B_l \otimes_k V_l \otimes_k V_l$ as a non-unital algebra (\hspace*{-3pt}\cite[Proposition 3.1 and § 3.2]{KXinflation}) which we call the \emph{$l$\th layer} of $A$, and $J_l = Ae_lA$ (\hspace*{-3pt}\cite[Lemma 2.2]{HHKP}). The product of $x \in J_l \setminus J_{l-1}$ and $y \in J_{l'}\setminus J_{l'-1}$ lies in $J_t$, where $t = \min\{l,l'\}$ by \cite[§ 3.2]{KXinflation}. 

\begin{rk}
If $A$ is cellularly stratified and the input algebra $B_l$ is isomorphic to a subalgebra of $e_lAe_l$, then $b \in B_l$ can be regarded as an element $b \otimes u_l \otimes v_l \in A$, where $u_l$ and $v_l$ are the vectors from the definition of $e_l$. In this case, we have $$be_l=(b \otimes u_l \otimes v_l)(1 \otimes u_l \otimes v_l) =b \varphi(v_l,u_l) \otimes u_l \otimes v_l = b \otimes u_l \otimes v_l = \varphi(v_l,u_l)b \otimes u_l \otimes v_l = e_l b$$ since $\varphi(v_l,u_l)=1$ by a remark on page 5 of \cite{HHKP}, where $\varphi$ is the bilinear form $\varphi: V_l \otimes_k V_l \to B_l$ defining the multiplication in the inflation, cf. \cite[§ 3.1]{KXinflation}.
\end{rk}

\begin{prop}\label{prop: eAe stratified}
Let $A$ be cellularly stratified such that $B_l$ is isomorphic to a subalgebra of $e_lAe_l$ for some $l < r$. If $B_n \subseteq B_{n+1}$ for all $n < l$, the algebra $e_lAe_l$ is cellularly stratified with stratification data $(B_0,V_0^l, ... ,B_l,V_l^l),$ where $V_n^l \subseteq V_n$ is a subspace such that $e_n \in B_n \otimes_k V_n^l \otimes_k V_n^l$, i.e. $u_n, v_n \in V_n^l$.
\end{prop}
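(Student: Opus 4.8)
The plan is to read $e_lAe_l$ directly off the iterated inflation $A=\bigoplus_{n=0}^r B_n\otimes_k V_n\otimes_k V_n$ and to recognise it as a new iterated inflation, in which the layers above $l$ have disappeared and each surviving space $V_n$ has been replaced by a subspace $V_n^l\subseteq V_n$ adapted to $e_l$. First I would remove the high layers. Writing $a\in A$ as a sum of its layer components and using that a product of a layer-$l$ element with a layer-$n$ element lands in $J_{\min\{l,n\}}\subseteq J_l$, one gets $e_lA\subseteq J_l$. Since $e_l^2=e_l$, every $e_lae_l$ equals $e_l(e_la)e_l$ with $e_la\in J_l$, so
$$e_lAe_l=e_lJ_le_l,$$
which reduces the problem to the ideal $J_l=\bigoplus_{m=0}^l B_m\otimes_k V_m\otimes_k V_m$ whose top layer now contains $e_l$.

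Next I would filter $e_lAe_l$ by the ideals $\tilde J_m:=e_lJ_me_l$ for $0\le m\le l$. The identity $e_l(\,\cdot\,)e_l=e_l\bigl(e_l(\,\cdot\,)e_l\bigr)e_l$ gives $\tilde J_m\cap J_{m-1}=\tilde J_{m-1}$, so that $\tilde J_m/\tilde J_{m-1}$ is isomorphic to the image of the two-sided action of $e_l$ on the $m$-th layer $B_m\otimes_k V_m\otimes_k V_m$. Identifying this image is the heart of the proof. By the cellular axioms for $A$, left multiplication by $e_l$ alters only the left $V_m$-slot modulo $J_{m-1}$ and right multiplication only the right one; and because the $B_l$-component of $e_l$ is $1_{B_l}$, with $\varphi(v_l,u_l)=1$ and $B_m\subseteq B_l$ (using $B_n\subseteq B_{n+1}$), the whole factor $B_m$ survives. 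Hence the image has the form $B_m\otimes_k V_m^{l,\mathrm L}\otimes_k V_m^{l,\mathrm R}$, where $V_m^{l,\mathrm L}$ (resp.\ $V_m^{l,\mathrm R}$) is the subspace of $V_m$ surviving left (resp.\ right) multiplication by $e_l$. The cellular anti-involution of $A$, which interchanges the two slots and fixes $e_l$, then forces $V_m^{l,\mathrm L}=V_m^{l,\mathrm R}=:V_m^l$; in the top layer this specialises to $u_l,v_l$ being proportional, so that $V_l^l$ is one-dimensional and the top layer is $\cong B_l$. That $u_m,v_m\in V_m^l$ is automatic: since $e_me_l=e_m=e_le_m$ gives $e_m=e_le_me_l\in e_lAe_l$, the element $1_{B_m}\otimes u_m\otimes v_m$ survives in layer $m$, whence $u_m\otimes v_m\in V_m^l\otimes V_m^l$. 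Summing over the filtration yields the vector space decomposition $e_lAe_l=\bigoplus_{m=0}^l B_m\otimes_k V_m^l\otimes_k V_m^l$.

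Finally I would verify that this is an iterated inflation carrying the required idempotents. Its multiplication is inherited from $A$, and by the previous step the associated graded of the filtration is the inflation multiplication of $B_m$ along $V_m^l$ with bilinear form the restriction of the form defining the $m$-th layer of $A$; thus $e_lAe_l$ is an iterated inflation of $B_0,\dots,B_l$ along $V_0^l,\dots,V_l^l$ and so cellular by \cite{KXinflation}. The hypotheses $B_n\subseteq B_{n+1}$ and $B_l\hookrightarrow e_lAe_l$ guarantee that the input algebras are again exactly $B_0,\dots,B_l$, via $b\mapsto be_l$ as in the Remark. The idempotents $e_0,\dots,e_l$ all lie in $e_lAe_l$ by the computation above, each is $1_{B_m}\otimes u_m\otimes v_m$ with $u_m,v_m\in V_m^l$, and the relations $e_me_{m'}=e_m=e_{m'}e_m$ for $m\le m'\le l$ are inherited from $A$; this exhibits $(B_0,V_0^l,\dots,B_l,V_l^l)$ as stratification data. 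The main obstacle is the middle step: showing that the two-sided $e_l$-action on each layer has image of the tensor-square form $B_m\otimes_k V_m^l\otimes_k V_m^l$, which rests on the cellular cell-datum axioms together with the compatibility of the anti-involution with the idempotents $e_l$.
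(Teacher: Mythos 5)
Your overall strategy is the same as the paper's (read $e_lAe_l$ off the iterated inflation layer by layer, filter by $e_lJ_me_l$, identify the subquotients), but the step you yourself flag as "the heart of the proof" rests on a claim the axioms do not provide. You assert, as a cellular axiom, that left multiplication by $e_l$ on the $m$-th layer "alters only the left $V_m$-slot" and leaves the $B_m$-factor untouched. In an iterated inflation, left multiplication by an element of $A$ acts on $J_m/J_{m-1} \cong B_m \otimes_k V_m \otimes_k V_m$ as an arbitrary linear endomorphism of the combined factor $B_m \otimes_k V_m$ (tensored with the identity on the right slot), so it may mix the algebra slot and the vector-space slot; the image of $x \mapsto e_lxe_l$ on a layer is a priori just some subspace, not one of the split form $B_m \otimes_k V_m^{l,\mathrm{L}} \otimes_k V_m^{l,\mathrm{R}}$ with the \emph{full} $B_m$ surviving. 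This is precisely where the paper's proof does its real work: after writing $e_lAe_l = \bigoplus_{n\le l} B_n^l \otimes_k V_n^l \otimes_k V_n^l$ with a priori smaller $B_n^l \subseteq B_n$, it deploys the chain hypothesis $B_n \subseteq B_{n+1} \subseteq \cdots \subseteq B_l \hookrightarrow e_lAe_l$ together with the Remark's commutation $be_l = e_lb$ (which rests on $\varphi(v_l,u_l)=1$) to compute $b \otimes u_n \otimes v_n = be_n = be_le_ne_l = e_lbe_ne_l \in B_n^l \otimes_k V_n^l \otimes_k V_n^l$, forcing $B_n^l = B_n$. You mention $B_m \subseteq B_l$ and $\varphi(v_l,u_l)=1$ in passing, but your justification ("the $B_l$-component of $e_l$ is $1_{B_l}$") conflates the bilinear form governing multiplication \emph{within} layer $l$ with the cross-layer multiplication maps, which $\varphi$ does not determine; the commutation argument is what actually substitutes for this.

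The second genuine error is your appeal to "the cellular anti-involution of $A$, which interchanges the two slots and fixes $e_l$", from which you deduce $V_m^{l,\mathrm{L}} = V_m^{l,\mathrm{R}}$ and, in the top layer, that $u_l$ and $v_l$ are proportional. Neither is available: the definition of cellularly stratified only demands $u_l, v_l \in V_l \setminus \{0\}$ with $e_l = 1_{B_l} \otimes u_l \otimes v_l$ idempotent; the involution sends $e_l$ to $1_{B_l} \otimes v_l \otimes u_l$, which need not equal $e_l$, and nothing forces $u_l$, $v_l$ to be proportional. The paper needs neither fact: the top layer is handled via the isomorphism $B_l \simeq B_l \otimes_k \langle u_l \rangle_k \otimes_k \langle v_l \rangle_k$, and the containment $u_n, v_n \in V_n^l$ is obtained, exactly as in the one step of yours that is correct and coincides with the paper, from $e_n = e_le_ne_l \in e_lAe_l$ so that $1_{B_n} \otimes u_n \otimes v_n \in B_n^l \otimes_k V_n^l \otimes_k V_n^l$. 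So your reduction $e_lAe_l = e_lJ_le_l$, the filtration with $e_lJ_me_l \cap J_{m-1} = e_lJ_{m-1}e_l$, and the final verification of the idempotent relations are all sound, but the identification of the layers — both the survival of the full $B_m$ and the symmetry of the two slots — must be rerouted through the commutation argument rather than through unavailable involution and slot-separation axioms.
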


\begin{proof}
Let $A = \bigoplus\limits_{n=0}^r B_n \otimes_k V_n \otimes_k V_n$. Then 
$$\begin{aligned} 
e_lAe_l  & = e_l(\bigoplus\limits_{n=0}^r B_n \otimes_k V_n \otimes_k V_n)e_l  \\ 
& = e_l(\bigoplus\limits_{n=0}^l B_n \otimes_k V_n \otimes_k V_n)e_l \\
& \subseteq B_l \oplus (\bigoplus\limits_{n=0}^{l-1} B_n \otimes_k V_n \otimes_k V_n) \end{aligned}$$ where the inclusion holds up to the isomorphism $B_l \simeq B_l \otimes_k \langle u_l \rangle_k \otimes_k \langle v_l\rangle_k$. Hence we have $e_lAe_l = \bigoplus\limits_{n=0}^l B_n^l \otimes_k V_n^l \otimes_k V_n^l$ for some $B_n^l \subseteq B_n, V_n^l \subseteq V_n$. It is $e_le_ne_l = e_n = 1_{B_n} \otimes u_n \otimes v_n \in B_n^l \otimes_k V_n^l \otimes_k V_n^l$  for all $n \leq l$. Since $B_n \subseteq B_l$, we have $be_l = e_lb$ for $b \in B_n$ by the  remark above, so $ b \otimes u_n \otimes v_n  = be_n = be_le_ne_l = e_lbe_ne_l  \in B_n^l \otimes_k V_n^l \otimes_k V_n^l$. Hence $B_n^l = B_n$.   
\end{proof}

Our main example of a cellularly stratified algebra will be the partition algebra $P_k(r,\delta)$. It is defined as follows. 

\begin{df}\label{def: partition algebra} Let $k$ be an algebraically closed field of arbitrary characteristic. Let $r \in \mathbb{N}$ and $\delta \in k$. The partition algebra $P_k(r,\delta)$ is the algebra with basis given by all set partitions of $\{1,...,r,1',...,r'\}$. To each set partition, we associate an equivalence class of diagrams consisting of two rows of $r$ dots each. Two dots $a$ and $b$ are connected via a path $a-...-b$ if and only if they belong to the same part of the set partition. Two diagrams are equivalent, if they correspond to the same set partition. 

\begin{ex}
The set partition $\{\{1,2'\},\{2,1',3'\},\{3,4'\},\{4\}\}$ corresponds to the diagram \begin{minipage}{2.6cm}$\xysmall{\bullet \tra[dr] & \bullet \tra[dl] & \bullet \tra[dr] & \bullet \\ \bullet \arcd[rr] & \bullet & \bullet & \bullet}$\end{minipage} with path $2 - 1' - 3'$ as well as to the diagram \begin{minipage}{2.6cm}$\xysmall{\bullet \tra[dr] & \bullet \tra[dr] & \bullet \tra[dr] & \bullet \\ \bullet \arcd[rr] & \bullet & \bullet & \bullet}$\end{minipage} with path $2 - 3' - 1'$, and the diagrams are equivalent to each other. 
\end{ex} 

We choose to write all diagrams such that the paths are ordered decreasingly with respect to the order $r > r-1 > ... > 1 > 1' > 2' > ... > n'$, like in the first diagram of the above example. Multiplication is given by concatenation of diagrams. Parts which are not connected to either top or bottom row (called \emph{inner circles}) are replaced by a factor $\delta \in k$.

\begin{ex}
Let $x=$\begin{minipage}{26mm}$\xysmall{\bullet & \bullet \arcd[rr] & \bullet \tra[dll] & \bullet \\ \bullet \arcu[rrr] & \bullet \tra[r] & \bullet & \bullet}$\end{minipage} and $y=$\begin{minipage}{26mm}$\xysmall{\bullet \tra[d] & \bullet \tra[r] & \bullet & \bullet \tra[dll] \\ \bullet & \bullet \tra[r] & \bullet \tra[r] & \bullet}$\end{minipage} in $P_k(4,\delta)$ then we have $xy=$\begin{minipage}{26mm}$\xysmall{\bullet & \bullet \arcd[rr] & \bullet \tra[dll] & \bullet \\ \bullet \arcu[rrr]\tra[d] & \bullet \tra[r]\tra[d] & \bullet\tra[d] & \bullet\tra[d] \\\bullet \tra[d] & \bullet \tra[r] & \bullet & \bullet \tra[dll] \\ \bullet & \bullet \tra[r] & \bullet \tra[r] & \bullet}$\end{minipage} $=\delta \cdot$\begin{minipage}{26mm}$\xysmall{\bullet & \bullet \arcd[rr] & \bullet \tra[dll] & \bullet \\ \bullet \tra[r] & \bullet \tra[r] & \bullet \tra[r] & \bullet}$
\end{minipage}
\end{ex}
 For further details (in a non-diagrammatic setting), see for example \cite{Xi}.
\end{df}

Note that multiplication of diagrams can decrease the number of \emph{propagating parts}, i.e. parts connecting top and bottom row, but never increase the number of propagating parts. 

A diagram consisting of only one row with $r$ dots and arbitrary connections is called \emph{partial diagram}. We have to distinguish certain parts from others; we say they are \emph{labelled} and write the dots as empty circles $\circ$ instead of dots $\bullet$. When we complete a partial diagram to a full diagram with two rows of dots, the labelled parts become propagating, i.e. they are connected to the other row.  We count the parts from left to right, according to the leftmost dot of each part. 
Let $V_n$ be the vector space with basis all partial diagrams with exactly $n$ labelled parts (and possibly further unlabelled parts). 
For example, $\xymatrixcolsep{10pt}\xymatrix{\bullet \arcu[rr] & \circ & \bullet \tra[r] & \bullet & \circ \tra[r] & \circ & \bullet}$ is a basis element of $V_2$, with $r=7$; the labelled singleton $\circ$ is the first labelled part, the part $\circ - \circ$ is the second.
We write $\top(d)$ to denote the top row of a diagram $d \in P_k(r,\delta)$ and $\bottom(d)$ for its bottom row. The permutation induced by the propagating parts is denoted by $\Pi(d)$. It is well-defined by the convention to connect labelled top and bottom row parts via their respective leftmost dots. \\

 If $\delta \neq 0$, the partition algebra is cellularly stratified by \cite[Proposition 2.6]{HHKP} with stratification data $(k, V_0, k, V_1, k\Sigma_2, V_2, ..., k\Sigma_r, V_r)$.
The idempotents are given by 
$$e_0 := \frac{1}{\delta} \cdot \begin{minipage}{3.7cm} \xysmall{ \bullet^1 \tra[r]  & \bullet \tra[r] & ...  \tra[r]  & \bullet \tra[r] & \bullet^r \\  \bullet_{1'} \tra[r] & \bullet \tra[r] & ...  \tra[r]  & \bullet \tra[r] & \bullet_{r'} } \end{minipage},\text{ } e_n:= \begin{minipage}[c]{3.7cm} \xysmall{\bullet^1 \tra[d] & ... & \bullet \tra[d] & \bullet^n \tra[r] \tra[d] &  ...  \tra[r]  & \bullet^r \\ \bullet_{1'} & ... & \bullet & \bullet_{n'} \tra[r] &  ...  \tra[r] & \bullet_{r'}} \end{minipage}\text{ for }n\geq 1.$$

For $0 \leq l \leq r$, there is an algebra isomorphism $P_k(l,\delta) \to e_l P_k(r,\delta)e_l$ given by attaching $r-l$ dots to the right of both top and bottom row and connecting the new dots to the rightmost dots of top and bottom row respectively of the original diagram.

The partition algebra $P_k(r,\delta)$ contains the Brauer algebra $B_k(r,\delta)$ and the group algebra $k\Sigma_r$ of the symmetric group $\Sigma_r$ as subalgebras. The Brauer algebra is the subalgebra with basis given by all diagrams where each dot is connected to exactly one other dot. We call such a connection \emph{(horizontal) arc} if it connects two dots within the same row. A permutation $\sigma \in \Sigma_r$ corresponds to the diagram connecting the $i$\th dot of the top row to the $\sigma(i)$\th dot of the bottom row.  

\subsection{Setup}\label{subsec: setup}
Let $A$ be cellularly stratified with stratification data $(B_0,V_0,...,B_r,V_r)$ where the $B_l$ are isomorphic to group algebras of symmetric groups or their Iwahori-Hecke algebras, such that for each $l\in \{0,...,r\}$ we have an embedding ${B_l \hookrightarrow e_lAe_l}$ of algebras. This is satisfied for Brauer algebras and partition algebras, but not for BMW-algebras, the third main example of cellularly stratified algebras in \cite{HHKP}. However, it is satisfied for another deformation of Brauer algebras: the {$q$-Brauer} algebras defined by Wenzl in \cite{Wenzl}. We choose as cell modules for the cellular algebras $B_l$ the dual Specht modules $S_\lambda$. \\

We need two types of induction and two types of restriction functors, namely 
$$\begin{aligned}
ind_l:B_l-\mod &\to A-\mod &\quad Ind_l:B_l-\mod &\to A-\mod \\
M &\mapsto Ae_l \tensorel M  &\quad M &\mapsto  Ae_l \tensorover{B_l} M\\
\hfill
res_l:A-\mod &\to B_l-\mod &\quad Res_l:A-\mod &\to B_l-\mod\\
N &\mapsto e_l(A/J_{l-1}) \tensorover{A} N    &\quad N &\mapsto e_lA \tensorover{A} N \simeq e_lN \\
\end{aligned}$$ 
where $J_l$ denotes the two-sided ideal $Ae_lA$ and $e_l(A/J_{l-1})$ is a short notation for $e_lA/e_lJ_{l-1}$.

\begin{rk} $Ae_l$ has a right $B_l$-module structure because we assumed $B_l$ to be isomorphic to a subalgebra of $e_lAe_l$. 
  Any $B_l$-module $M$ has an $e_lAe_l$-module structure via the quotient map $e_lAe_l \twoheadrightarrow e_l(A/J_{l-1})e_l \simeq B_l$, cf. \cite[Lemma 2.3]{HHKP}.
\end{rk}
 
Let $N \in A-\mod$. We call the left-ideal $(J_n/J_{n-1}) \tensorover{A} N$ the \emph{$n\th$ layer} of $N$.
The functor $ind_l$ sends a $B_l$-module $M$ to an $A$-module living in the $l\th$ layer, i.e. $ind_lM = (J_l/J_{l-1})\tensorover{A}ind_lM = (J_l/J_{l-1})e_l \tensorel M$. This is explained in the beginning of Subsection \ref{subsec: functors}.
We call this functor \emph{layer induction}. 

The induction functor $Ind_l$ sends a $B_l$-module $M$ to an $A$-module with non-zero action of $J_{l-1}$, i.e. $Ind_lM$ lives in all layers $n$ with $n\leq l$.
 
While $res_l$ removes the lower layers (with $n < l$) of the $A$-module $N$, $Res_l$ keeps all layers of the module. 

\subsection{Properties of the Functors}\label{subsec: functors}

 For each $B_l$-module $X$, we have $X \simeq B_l \tensorover{B_l} X \simeq B_l \tensorel X $, where $e_lAe_l$ acts on both $X$ and $B_l$ via $e_lAe_l \twoheadrightarrow e_l(A/J_{l-1})e_l \simeq B_l$ (\hspace*{-3pt}\cite[Lemma 2.3]{HHKP}). Thus, the layer induction $ind_l$ corresponds to the functor $G_l := Ae_l \tensorel B_l \tensorel -$, defined in \cite{HHKP}.
 Hence, we can apply \cite[Lemma 3.4]{HHKP} to get an isomorphism  $ind_lX \simeq (A/J_{l-1})e_l \tensorel X$ of $A$-modules. We will make extensive use of the isomorphisms $$ind_lX\simeq G_lX \simeq (A/J_{l-1})e_l \tensorel X \simeq (A/J_{l-1})e_l \tensorover{B_l} X$$ without special mention. 

\begin{prop}[\hspace*{-3pt}\cite{HHKP}, Propositions 4.1 - 4.3; Corollary 7.4; Propositions 8.1 and 8.2]\label{prop: HHKP properties ind} 
The functor $ind_l$ has the following properties.
\begin{enumerate}
\item It is exact. \label{ind property exact}
\item The set $\{ ind_lC | l=0,...,r; C \text{ cell module of } B_l\}$ is a complete set of cell modules for $A$. \label{ind property cells induced}
\item $\Hom_{B_l}(X,Y)\simeq \Hom_A(ind_lX,ind_lY)$ for all $X,Y \in B_l-\mod$.  \label{ind property Hom}
\item $\Ext_A^i(M,N) \simeq \Ext_{A/J_l}^i(M,N)$ for all $i>0$ and $M,N \in A/J_l-\mod$. \label{ind property Ext A/J}
\item $\Ext_{B_l}^j(X,Y) \simeq \Ext_A^j(ind_lX,ind_lY)$ for all $j\geq 0$ and $X,Y \in B_l-\mod$. \label{ind property Ext B}
\end{enumerate}
If $l<m$ then 
\begin{enumerate}\setcounter{enumi}{5}
\item $\Hom_A(ind_lX,ind_mY)=0$ for all $X \in B_l-\mod, Y \in B_m-\mod$. \label{ind property Hom directed}
\item $\Ext_A^i(ind_lX,ind_mY)=0$ for all $i\geq 1$ and $X \in B_l-\mod, Y \in B_m-\mod$. \label{ind property Ext directed}
\end{enumerate} 
\end{prop}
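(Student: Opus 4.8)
The plan is to derive all seven statements from the idempotent $e_l$ and the recollement attached to it, the only genuinely algebra-specific inputs being two flatness/vanishing facts coming from the iterated inflation structure. Throughout I would use the identifications $ind_lX\simeq (A/J_{l-1})e_l \tensorel X \simeq Ae_l \tensorel X$, where $X$ is viewed as an $e_lAe_l$-module through $e_lAe_l \twoheadrightarrow e_l(A/J_{l-1})e_l \simeq B_l$. For (1) I reduce $ind_l$ to $Ae_l \tensorel -$ and show $Ae_l$ is projective, hence flat, as a right $e_lAe_l$-module; this is where the decomposition $A=\bigoplus_n B_n\otimes_k V_n \otimes_k V_n$ enters, since writing out $Ae_l$ layer by layer exhibits it as a direct sum of $e_lAe_l$-summands, making $Ae_l \tensorel -$ exact. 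Statement (2) is not homological: it is the assertion that the cell modules of the iterated inflation $A$ are exactly the inflations of the cell modules of the layers $B_n$, which is the content of the inflation construction of \cite{KXinflation}, and one checks that the inflated cell module attached to the pair (layer $l$, cell module $C$ of $B_l$) is precisely $ind_lC$.

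For (3) I use the adjunction $\Hom_A(Ae_l \tensorel X, N)\simeq \Hom_{e_lAe_l}(X, e_lN)$. Taking $N=ind_lY$ and using that $ind_lY$ lives in the $l$th layer gives $e_l\cdot ind_lY \simeq e_l(A/J_{l-1})e_l \tensorel Y \simeq B_l \tensorel Y \simeq Y$; since the $e_lAe_l$-action on both $X$ and $Y$ factors through $B_l$, the resulting $\Hom_{e_lAe_l}(X,Y)$ is just $\Hom_{B_l}(X,Y)$. Statement (4) is the assertion that $J_l=Ae_lA$ is a stratifying ideal, i.e. that the multiplication map $Ae_l \tensorel e_lA \to J_l$ is an isomorphism and $\text{Tor}_i^{e_lAe_l}(Ae_l,e_lA)=0$ for $i>0$; granting this, $A\to A/J_l$ is a homological epimorphism on $A/J_l$-modules and the Ext-comparison follows formally. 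This $\text{Tor}$-vanishing is again read off from the inflation structure.

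For (5), (6) and (7) I would first reduce modulo $J_{l-1}$: since $ind_lX$, $ind_lY$ and $ind_mY$ (note $m-1\geq l-1$) are $A/J_{l-1}$-modules, statement (4) gives $\Ext_A^i\simeq \Ext_{A/J_{l-1}}^i$ for $i>0$. The key observation is that in $\bar A:=A/J_{l-1}$ the top layer splits off, so $\bar Ae_l$ is a projective $\bar A$-module and hence $ind_lP$ is $\bar A$-projective whenever $P$ is projective over $B_l$. Applying the exact functor $ind_l$ to a projective resolution $P_\bullet\to X$ over $B_l$ then produces a genuine projective resolution of $ind_lX$ over $\bar A$; feeding in $ind_lY$ and applying (3) termwise yields $H^j\Hom_{B_l}(P_\bullet,Y)=\Ext^j_{B_l}(X,Y)$, which proves (5) (the case $j=0$ being (3)). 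For (6) I note that $e_l\in J_l\subseteq J_{m-1}$, so $e_l$ acts as zero on the $A/J_{m-1}$-module $ind_mY$; hence $e_l\cdot ind_mY=0$ and the adjunction of (3) forces $\Hom_A(ind_lX,ind_mY)=0$. Feeding $ind_mY$ into the resolution $ind_lP_\bullet$ then makes every term $\Hom_{\bar A}(ind_lP_j,ind_mY)=0$ by (6), so all higher Ext vanish, giving (7).

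The main obstacle is concentrated in the two inflation-theoretic inputs: the projectivity of $Ae_l$ as a right $e_lAe_l$-module, needed for the exactness in (1) and the resolutions above, and the $\text{Tor}$-vanishing making $J_l$ a stratifying ideal, needed for (4). Once these are in hand, statements (3), (5), (6) and (7) are formal consequences of the adjunction, the factorization $e_l(A/J_{l-1})e_l\simeq B_l$, and careful bookkeeping of which ideal $J_n$ annihilates which induced module. Since all of these are established in \cite{HHKP} in exactly this layered setting, I would, in the write-up, cite the corresponding results and indicate only the reductions above.
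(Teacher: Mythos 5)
The paper offers no proof of this proposition at all: it is imported wholesale from \cite{HHKP} (Propositions 4.1--4.3, Corollary 7.4, Propositions 8.1 and 8.2), so the comparison is really with the proofs there, and your architecture --- the adjunction $\Hom_A(Ae_l \tensorel X, N)\simeq \Hom_{e_lAe_l}(X,e_lN)$, the stratifying-ideal formalism for (4), induced projective resolutions $ind_lP_\bullet$ over $\bar A = A/J_{l-1}$ for (5) and (7), and the annihilation argument $e_l\cdot ind_mY=0$ for (6) --- is exactly the route taken there and is correct as far as it goes. There is, however, one genuine flaw, concentrated in your treatment of (1): the layer filtration of $Ae_l$ does \emph{not} split as right $e_lAe_l$-modules in general, so ``writing out $Ae_l$ layer by layer'' does not exhibit it as a direct sum of $e_lAe_l$-summands, and projectivity (or even flatness) of $Ae_l$ over $e_lAe_l$ is neither proved in \cite{HHKP} nor available here. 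The failure of such splittings is precisely why this paper must later impose Assumption~(I) --- and even there only as a splitting of right $B_l$-modules, verified case by case for Brauer and partition algebras --- whereas the present proposition must hold for \emph{every} algebra of the setup, prior to and independently of those assumptions. Your closing sentence, which proposes to cite \cite{HHKP} for the projectivity of $Ae_l$ over $e_lAe_l$, would therefore cite a result that does not exist.

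The repair is standard and cheap, because only a weaker fact is ever used: every module fed into $ind_l$ is pulled back along $e_lAe_l \twoheadrightarrow e_l(A/J_{l-1})e_l \simeq B_l$, so $ind_lX \simeq (Ae_l \tensorel B_l)\tensorover{B_l} X \simeq (A/J_{l-1})e_l \tensorover{B_l} X$ by \cite[Lemma 3.4]{HHKP}, and $(A/J_{l-1})e_l$ is a \emph{free} right $B_l$-module of rank $\dim V_l$ by \cite[Proposition 3.5]{HHKP}; this gives exactness of $ind_l$ on $B_l$-$\mod$, which is all that (1) asserts. The rest of your sketch then survives unchanged: your observation that $\bar A\bar e_l$ is projective over $\bar A$ (trivially, since $\bar e_l$ is an idempotent of $\bar A$) is the correct source of the projectivity of $ind_lP$ needed in (5) and (7), the $\mathrm{Tor}$-vanishing making each $J_l$ a stratifying ideal is genuinely in \cite{HHKP} and your formal deduction of (4) from it is fine, and (2), (3) and (6) are as you say. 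So the proposal is structurally sound and faithful to the source proofs, but the stated justification of exactness would fail as written and must be replaced by the freeness statement above.
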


The induction $Ind_l$ is not exact in general and does not send cell modules to cell modules. 
However, we will give sufficient conditions for $Ind_l$ to send cell filtered modules to cell filtered modules in Section \ref{sec: permutation modules}. Theorem \ref{thm: relative projective} will tell us that, under additional conditions, $Ind_l$ sends relative projective modules to relative projective modules, cf. Definition \ref{def: rel proj}.

The following properties of the functors are straightforward calculations. The layer restriction $res_l$ is right-exact, but in general not exact. It is left adjoint to $\Hom_{B_l}(e_l(A/J_{l-1}),-)$ and left inverse to both $ind_l$ and $Ind_l$. 
The restriction $Res_l$ is exact, since $e_lA$ is projective as right $A$-module. It is left adjoint to $\Hom_{B_l}(e_lA,-)$ and right adjoint to $Ind_l$, i.e. we have a triple $(Ind_l, Res_l, \Hom_{B_l}(e_lA,-))$ of adjoint functors. Furthermore, $Res_l$ is left inverse to $ind_l$, but in general not to $Ind_l$; the layers added by $Ind_l$ are not removed by $Res_l$. 

For example, if $A$ is the Brauer algebra $B_\mathbb{C}(3,\delta)$ with $\delta \neq 0$ and $l=3$, and $X$ is the trivial $\mathbb{C}\Sigma_3$-module $\mathbb{C}$, then $e_3J_1e_3 = J_1 = Ae_1A$, which consists of all linear combinations of Brauer diagrams with exactly one horizontal arc per row. The left $\mathbb{C}\Sigma_3$-module $Res_3Ind_3 \mathbb{C}$ contains $Ae_1A \tensorover{\mathbb{C}\Sigma_3} \mathbb{C}$ which has a basis $$\left\lbrace \bigg[\begin{minipage}{1.4cm} \xymini{\bullet \tra[d] & \bullet \arcd[r] & \bullet \\ \bullet & \bullet \arcu[r] & \bullet} \end{minipage} \bigg], \bigg[\begin{minipage}{1.4cm} \xymini{\bullet \arcd[rr] & \bullet \tra[dl] & \bullet \\ \bullet & \bullet \arcu[r] & \bullet} \end{minipage} \bigg], \bigg[\begin{minipage}{1.4cm}\xymini{\bullet  \arcd[r] & \bullet & \bullet \tra[dll] \\ \bullet & \bullet \arcu[r] & \bullet}\end{minipage} \bigg] \right\rbrace,$$ where the brackets denote residue classes containing all three bottom row configurations. In particular, $Ae_1A \tensorover{\mathbb{C}\Sigma_3} \mathbb{C}$ is non-zero and not isomorphic to $X$. 

\begin{prop} If $X$ is a cell module of $A$, then $res_lX$ is a cell module of $B_l$ or zero. 
\end{prop}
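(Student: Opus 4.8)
The plan is to reduce immediately to the classification of cell modules in Proposition \ref{prop: HHKP properties ind}(\ref{ind property cells induced}): every cell module $X$ of $A$ is isomorphic to $ind_m C$ for some $m \in \{0,\dots,r\}$ and some cell module $C$ of $B_m$. It then suffices to compute $res_l(ind_m C)$ in the three cases $m=l$, $m>l$, $m<l$, and to show the result is $C$ (a cell module of $B_l$) when $m=l$ and zero otherwise. Throughout I would work with the explicit model $ind_m C \simeq (A/J_{m-1})e_m \tensorover{B_m} C$ recorded in Subsection \ref{subsec: functors}, together with the identification $res_l M \simeq e_l M / e_l J_{l-1} M$. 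The latter comes from applying the right-exact functor $res_l = e_l(A/J_{l-1}) \tensorover{A} -$ to $M$: the presentation $e_l J_{l-1} \to e_l A \to e_l(A/J_{l-1}) \to 0$ of right $A$-modules yields, after $-\tensorover{A} M$ and the canonical isomorphism $e_l A \tensorover{A} M \simeq e_l M$, that $res_l M$ is the cokernel $e_l M / e_l J_{l-1} M$.

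The case $m=l$ is immediate, since $res_l$ is left inverse to $ind_l$ (stated in Subsection \ref{subsec: functors}): $res_l(ind_l C) \simeq C$, a cell module of $B_l$ by our standing hypothesis on the input algebras.

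For $m>l$ I would show that already $e_l\, ind_m C = 0$, so a fortiori $res_l(ind_m C)=0$. First observe that $ind_m C$ is annihilated by the two-sided ideal $J_{m-1}$: the left action $a\cdot(\overline{x}\tensorover{B_m}c)=\overline{ax}\tensorover{B_m}c$ vanishes whenever $a \in J_{m-1}$, because then $ax \in J_{m-1}$. Hence $ind_m C$ is an $A/J_{m-1}$-module. Now $m>l$ forces $l\le m-1$, so $e_l \in J_l \subseteq J_{m-1}$, and therefore $e_l$ acts as zero on $ind_m C$.

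For $m<l$ the mechanism is dual: here I would show that the denominator already exhausts the numerator, i.e. $e_l M = e_l J_{l-1} M$ for $M=ind_m C$, so that the cokernel vanishes. The crucial point is that $M$ coincides with $J_m M$. Indeed, using $e_m \in J_m$ and $xe_m=(xe_m)e_m$ for $x\in A$, every generator $\overline{xe_m}\tensorover{B_m}c = (xe_m)\cdot(\overline{e_m}\tensorover{B_m}c)$ of $M$ lies in $J_m M$; equivalently one may simply cite the fact from Subsection \ref{subsec: setup} that layer induction produces modules living in the $m$-th layer. Since $m\le l-1$ gives $J_m \subseteq J_{l-1}$, we get $M = J_m M \subseteq J_{l-1} M \subseteq M$, whence $M=J_{l-1}M$ and $e_l M = e_l J_{l-1} M$, so $res_l M = 0$. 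Assembling the three cases proves the claim. I expect the only genuinely delicate step to be this identity $M = J_m M$ in the case $m<l$; everything else is bookkeeping with the idempotents $e_l$ and the ideals $J_l$ via the relations $e_l e_{l'} = e_l = e_{l'}e_l$ for $l'\ge l$ and $J_l = Ae_lA$.
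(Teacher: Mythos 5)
Your proof is correct and follows essentially the same route as the paper: both reduce via Proposition \ref{prop: HHKP properties ind}(\ref{ind property cells induced}) to $X \simeq ind_m C$ and then dispose of the three cases $m=l$, $m>l$, $m<l$ using membership of the idempotents in the ideals $J_n$. The only difference is presentational: the paper merges the two vanishing cases into the single formula $res_l\, ind_n S_\nu \simeq e_l(A/J_m)e_n \tensorover{e_nAe_n} S_\nu$ with $m = \max\{l-1,n-1\}$, whereas you argue them separately through the cokernel model $res_l M \simeq e_lM/e_lJ_{l-1}M$ (annihilation by $J_{m-1}$ for $m>l$, and $M = J_mM \subseteq J_{l-1}M$ for $m<l$) --- the same underlying facts, equally valid.
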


\begin{proof}
Let $X$ be a cell module of $A$. By Proposition \ref{prop: HHKP properties ind}, part \emph{(\ref{ind property cells induced})}, we have $X \simeq ind_nS_\nu$ for some $1 \leq n \leq r$, where $S_\nu$ is a dual Specht module in $B_n-\mod$. This implies $res_lX \simeq res_lind_nS_\nu \simeq 
e_l(A/J_{l-1}) \tensorover{A} (A/J_{n-1})e_n \tensorover{e_nAe_n} S_\nu \simeq  e_l(A/J_m)e_n \tensorover{e_nAe_n} S_\nu$, where $m= \max\{l-1,n-1\}$.
If $n < l$, then $e_n \in J_m=J_{l-1}$ and if $n > l$, then $e_l \in J_m=J_{n-1}$. So, in both cases we have $res_lX=0$. For $n=l$, we have $res_lind_lS_\nu\simeq e_l(A/J_{l-1}) \tensorover{A} (A/J_{l-1})e_l \tensorel S_\nu\simeq e_l(A/J_{l-1})e_l\tensorover{B_l} S_\nu \simeq S_\nu$. Thus, the layer restriction of a cell module from the same layer is a cell module, while cell modules from other layers vanish under restriction. 
\end{proof}

\subsection{Further Definitions and Notation}\label{subsec: def permutation modules etc}
Let $\Lambda_r:=\{(l,\lambda) | 0\leq l\leq r, \lambda \vdash l'\}$, where $l'$ is the index of the symmetric group related to $B_l$ and $\lambda \vdash l'$ means that $\lambda$ is a partition of $l'$. We define an order $\prec$ on $\Lambda_r$ by setting  $$(n,\nu) \prec (l,\lambda) \Leftrightarrow n\geq l \text{ and if } n=l \text{ then } \nu \leq \lambda \text{ in the dominance order}. $$  
Let $(l,\lambda) \in \Lambda_r$ and let $M^\lambda$ be the corresponding permutation module in $B_l-\mod$.

\begin{df}
 We call the $A$-module $M(l,\lambda):=Ind_lM^\lambda$ {\em permutation module} for $A$.
\end{df}

Let $\Theta := \{\Theta(l,\lambda):= ind_l S_\lambda \,|\, (l,\lambda) \in \Lambda_r\}$ denote the set of cell modules. The category of $A$-modules with a cell filtration, i.e.\phantom{s}modules $M$ admitting a chain of submodules $M=M_n \supset M_{n-1} \supset ... \supset M_1 \supset M_0 = 0$ such that the subquotients $M_m/M_{m-1}$ are isomorphic to cell modules, is denoted by $\mathcal{F}(\Theta)$. The category of $B_l$-modules admitting a filtration by dual Specht modules is denoted by $\mathcal{F}_l(S)$. 

\begin{df}[\hspace*{-3pt}\cite{HHKP}, Definition 11.2]\label{def: rel proj}
Let $M, M' \in \mathcal{F}(\Theta)$. We say that $M$ is \emph{relative projective in $\mathcal{F}(\Theta)$}, if \begin{center}
$\Ext_A^1(M,N)=0$ for all $N \in \mathcal{F}(\Theta)$.\end{center}
 $M \in \mathcal{F}(\Theta)$ is the \emph{relative projective cover} of $M'$, if $M$ is minimal with respect to the property that there is an epimorphism $f: M \twoheadrightarrow M'$ with $\ker f \in \mathcal{F}(\Theta)$.
 \end{df}

\section{Young Modules}\label{sec: permutation modules}
In this section, we define Young modules as direct summands of permutation modules, following the definitions given for Brauer algebras by Hartmann and Paget, \cite{HP}. This allows us to extend the results of James for group algebras of symmetric groups to cellularly stratified algebras whose input algebras are isomorphic to group algebras of symmetric groups or their Hecke algebras. 

\begin{thm}\label{def: Y}
Let $A$ be a cellularly stratified algebra with input algebras isomorphic to group algebras of symmetric groups or their Hecke algebras. Assume that $e_lAe_l \simeq B_l \oplus e_lJ_{l-1}e_l$ as $(B_l,B_l)$-bimodules. 
Then $Ind_lM^\lambda$ has a unique direct summand with quotient isomorphic to $ind_l Y^\lambda$. 
\end{thm}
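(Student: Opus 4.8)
\emph{Strategy.} The plan is to transport James' decomposition of $M^\lambda$ through $Ind_l$ and to recognise the desired summand on the top layer. I would begin in $B_l\text{-}\mod$, writing $M^\lambda \simeq Y^\lambda \oplus Z$ by James' theorem, where $Y^\lambda$ is indecomposable and $Z$ is a direct sum of Young modules $Y^\mu$ with $\mu < \lambda$ in the dominance order; the property I will use is that $Y^\lambda$ is the \emph{only} summand of $M^\lambda$ admitting the dual Specht module $S_\lambda$ as a quotient. Applying the additive functor $Ind_l$ gives $Ind_lM^\lambda \simeq Ind_lY^\lambda \oplus Ind_lZ$. The standing hypothesis $e_lAe_l \simeq B_l \oplus e_lJ_{l-1}e_l$ is what makes this bookkeeping transparent: it yields $Res_lInd_lM^\lambda \simeq M^\lambda \oplus (e_lJ_{l-1}e_l \tensorover{B_l} M^\lambda)$ with the unit $M^\lambda \to Res_lInd_lM^\lambda$ split mono, so that via the adjunction $(Ind_l,Res_l)$ and $Res_l\,ind_l \simeq \id$ one computes $\Hom_A(Ind_lY^\mu, ind_lY^\lambda) \simeq \Hom_{B_l}(Y^\mu, Y^\lambda)$.

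\emph{Passing to the top layer.} The key point is that the top-layer functor $-\,/\,J_{l-1}(-)$ is additive and right exact and sends $Ind_lX$ to $ind_lX$. By Proposition \ref{prop: HHKP properties ind}\,(\ref{ind property Hom}) we have $\End_A(ind_lY^\lambda) \simeq \End_{B_l}(Y^\lambda)$, which is local, so $ind_lY^\lambda$ is indecomposable. I would now decompose $Ind_lY^\lambda$ into indecomposables $\bigoplus_j U_j$ and apply the top-layer functor to obtain $ind_lY^\lambda \simeq \bigoplus_j U_j/J_{l-1}U_j$. Since $ind_lY^\lambda$ is indecomposable, exactly one summand, say $Y(l,\lambda) := U_{j_0}$, has nonzero top layer, and that top layer is all of $ind_lY^\lambda$; every other $U_j$ satisfies $U_j = J_{l-1}U_j$ and hence lives in the layers strictly below $l$. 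This produces a summand $Y(l,\lambda)$ of $Ind_lM^\lambda$ together with a surjection $Y(l,\lambda) \twoheadrightarrow ind_lY^\lambda$, and as $Y^\lambda$ occurs once in $M^\lambda$ this summand occurs once.

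\emph{Uniqueness.} Let $U$ be any indecomposable summand of $Ind_lM^\lambda$ carrying a surjection $g\colon U \twoheadrightarrow ind_lY^\lambda$. By Krull--Schmidt, $U$ is a summand of some $Ind_lY^\nu$ with $Y^\nu \mid M^\lambda$, so by the previous paragraph either $U \simeq Y(l,\nu)$ with top layer $ind_lY^\nu$, or $U = J_{l-1}U$ lives below layer $l$. The second case is impossible: then $g(U) = g(J_{l-1}U) = J_{l-1}\,g(U) = J_{l-1}\,ind_lY^\lambda = 0$, contradicting that $g$ is onto the nonzero layer-$l$ module $ind_lY^\lambda$. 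Hence $U \simeq Y(l,\nu)$, and applying the right-exact top-layer functor to $g$ yields a surjection $ind_lY^\nu \twoheadrightarrow ind_lY^\lambda$; by exactness and full faithfulness of $ind_l$ (Proposition \ref{prop: HHKP properties ind}\,(\ref{ind property exact}),\,(\ref{ind property Hom})) this is $ind_l$ of a surjection $Y^\nu \twoheadrightarrow Y^\lambda$ in $B_l\text{-}\mod$. Composing with $Y^\lambda \twoheadrightarrow S_\lambda$ exhibits $S_\lambda$ as a quotient of the summand $Y^\nu$ of $M^\lambda$; James' characterisation forces $Y^\nu \simeq Y^\lambda$, and since Young modules for distinct partitions are non-isomorphic, $\nu = \lambda$ and $U \simeq Y(l,\lambda)$.

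\emph{Main obstacle.} The delicate step is this last one, ruling out the summands $Y(l,\mu)$ with $\mu < \lambda$ as competitors. These are genuine summands that surject onto various cell modules, so the argument must avoid collapsing $ind_lY^\lambda$ to its cell quotient $ind_lS_\lambda$ too early---note $\Hom_{B_l}(Y^\mu, S_\lambda)$ can be nonzero for $\mu < \lambda$---and instead retain the whole Young module $Y^\lambda$ and invoke James' uniqueness of the $S_\lambda$-quotient summand of $M^\lambda$. The accompanying technical care is the transfer of surjectivity from $A$-modules to $B_l$-modules, which is exactly supplied by the right exactness of the top-layer functor together with the full faithfulness of $ind_l$.
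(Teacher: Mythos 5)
Your proof is correct, and for the paper's Claims 1 and 2 it takes a genuinely different and in fact slicker route. The paper uses the hypothesis $e_lAe_l \simeq B_l \oplus e_lJ_{l-1}e_l$ in an essential way: applying $Res_l$ to $\iota_i\circ\pi_i$ and reading off the top-left matrix entries produces endomorphisms $f_i\in\End_{B_l}(Y^\lambda)$ with $\sum_i f_i=\id$ and $f_if_j=0$, and a unit $f_1$ is then extracted from the local ring $\End_{B_l}(Y^\lambda)$ to build the surjection $\varphi'\colon Y_1\twoheadrightarrow ind_lY^\lambda$; Claim 2 is handled by a separate element-level adjunction computation. You instead apply the additive, right-exact top-layer functor $(A/J_{l-1})\tensorover{A}-$, which by the paper's own identification $ind_lX\simeq (A/J_{l-1})e_l\tensorover{B_l}X$ carries $Ind_lX$ to $ind_lX$, and use indecomposability of $ind_lY^\lambda$ (from $\End_A(ind_lY^\lambda)\simeq\End_{B_l}(Y^\lambda)$, Proposition \ref{prop: HHKP properties ind} part (\ref{ind property Hom})) to force exactly one Krull--Schmidt summand $U_{j_0}$ of $Ind_lY^\lambda$ to carry the entire top layer, with all other summands satisfying $U_j=J_{l-1}U_j$ and hence admitting only the zero map to the $J_{l-1}$-annihilated module $ind_lY^\lambda$. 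This buys existence and uniqueness simultaneously, and — worth noting — your argument never actually uses the bimodule hypothesis: the $Res_lInd_l$ computation in your strategy paragraph is not load-bearing, so your proof establishes the summand's existence and uniqueness already in the setup of Subsection \ref{subsec: setup} (the hypothesis is of course still needed elsewhere, e.g.\ for Theorem \ref{thm: cell filtration}). For Claim 3 the two arguments agree up to producing a surjection $Y^\nu\twoheadrightarrow Y^\lambda$ in $B_l$-$\mod$, and then diverge: the paper derives a contradiction self-containedly via the Dipper--James computation ($y_{\lambda'}M^\nu=0$ for $\nu>\lambda$ versus $z_\lambda\in y_{\lambda'}Y^\lambda$), a formulation chosen precisely so that the Hecke-algebra case is covered by hand; you compose with $Y^\lambda\twoheadrightarrow S_\lambda$ and cite the characterisation of $Y^\lambda$ as the unique summand of $M^\lambda$ with quotient $S_\lambda$, which is the fact quoted in the introduction (via self-duality of Young modules, ultimately resting on $\Hom(S^\lambda,M^\nu)\neq 0\Rightarrow\lambda\trianglerighteq\nu$). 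That delegation is legitimate, but be aware that when $B_l$ is a Hecke algebra you are invoking precisely the $q$-analogue of this characterisation (available from Dipper--James), i.e.\ the very statement the paper opts to reprove directly with $y_{\lambda'}T_\pi x_\mu$; your caution against collapsing $ind_lY^\lambda$ to its cell quotient too early is well placed, since $\Hom_{B_l}(Y^\nu,S_\lambda)$-vanishing is exactly what distinguishes the Young module from the cell module here.
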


\begin{proof} 
It is well-known that the $B_l$-permutation module $M^\lambda$ decomposes into a direct sum of indecomposable Young modules $Y^\mu$ with multiplicities $a_\mu$, where $a_\lambda=1$ and $a_\mu \neq 0$ implies $\mu \leq \lambda$ (\hspace*{-3pt}\cite[Theorem 3.1]{Jtriv}). Therefore, we have  $Ind_lM^\lambda = \bigoplus\limits_{(l,\mu) \in \Lambda_r} (Ind_lY^\mu)^{a_\mu}$. Decompose $Ind_lY^\lambda$ further into a direct sum of indecomposables $Y_i$ for $i=1,...,s$.

\begin{claim} $Ind_lY^\lambda$ has a direct summand with quotient isomorphic to $ind_lY^\lambda$. \end{claim} 

Let $\pi_i:Ind_lY^\lambda \twoheadrightarrow Y_i$ be the projection onto $Y_i$ and $\iota_i: Y_i \hookrightarrow Ind_lY^\lambda$ the inclusion of $Y_i$.
The functor $Res_l$ is exact, so applying it to the composition $\iota_i \circ \pi_i$ gives maps $$\xymatrix{e_lA \tensorover{A} Ae_l \tensorover{B_l} Y^\lambda \ar[r]^-{e_lA \otimes \pi_i} & e_lA \tensorover{A} Y_i \ar[r]^-{e_lA \otimes \iota_i} & e_lA \tensorover{A} Ae_l \tensorover{B_l} Y^\lambda}.$$
By assumption, we have a decomposition $e_lAe_l \simeq B_l \oplus e_lJ_{l-1}e_l$ of right $B_l$-modules. Thus the homomorphism $Res_l(\iota_i \circ \pi_i):=(e_lA \otimes \iota_i)\circ (e_lA \otimes \pi_i) $ is given by a matrix, where the top left entry is an endomorphism  $f_i \in \End_{B_l}(Y^\lambda)$. This gives a commutative diagram
$$\xymatrix{ Ae_l \tensorover{B_l} Y^\lambda \ar@{->>}[r]^-{\pi_i} \ar[d]^-{Res_l} & Y_i \ar@{^{(}->}[r]^-{\iota_i} & Ae_l \tensorover{B_l} Y^\lambda \ar[d]^-{Res_l} \\
e_lAe_l \tensorover{B_l} Y^\lambda \ar[rr]^-{Res_l(\iota_i \circ \pi_i)} \ar[d]^\wr && e_lAe_l\tensorover{B_l} Y^\lambda \ar[d]^\wr \\
Y^\lambda \oplus (e_lJ_{l-1}e_l \tensorover{B_l} Y^\lambda) \ar[rr]^-{\tiny\begin{pmatrix} f_i & g_1 \\ g_2 & g_3
\end{pmatrix}} && Y^\lambda \oplus (e_l J_{l-1}e_l \tensorover{B_l} Y^\lambda)} $$ 

Let $y \in Y^\lambda$. Let $\pi_i(e_l \otimes y) = e_l \otimes x \, +\, lower \, terms$ for some $x \in Y^\lambda$. By \emph{lower terms} we mean terms of the form $e_lje_l \otimes z$ with $j \in J_{l-1}$ and $z \in Y^\lambda$. 

The commutativity of the above diagram says that, up to isomorphism, we have
$Res_l(\iota_i \circ \pi_i)(e_l\otimes y) = \begin{pmatrix}
f_i & g_1 \\ g_2 & g_3
\end{pmatrix}\begin{pmatrix}
y \\ 0
\end{pmatrix} = \begin{pmatrix}
f_i(y) \\ lower \, terms
\end{pmatrix}$, so we have $\pi_i(e_l \otimes y) = e_l \otimes f_i(y) \, + \, lower \, terms$. 

The identity on $Ind_lY^\lambda$ is $\sum\limits_{i=1}^s \pi_i$, so $$e_l \otimes y = \sum\limits_{i=1}^s \pi_i(e_l \otimes y) = \sum\limits_{i=1}^s (e_l \otimes f_i(y)) \, + \, lower \, terms$$ for any $y \in Y^\lambda$. Since there are no lower terms on the left hand side, they vanish on the right hand side and we have $e_l \otimes y = \sum\limits_{i=1}^s (e_l \otimes f_i(y))$. Hence $\sum\limits_{i=1}^s f_i(y) = y$, i.e. $\sum\limits_{i=1}^s f_i$ is the identity on $Y^\lambda$.  

Let $i \neq j$. Then $\pi_i \iota_j \pi_j = 0$, so for any $y \in Y^\lambda$ we have $0 = \pi_i \iota_j \pi_j(e_l \otimes y)= e_l \otimes f_if_j(y) \, + \, lower \, terms$. Therefore, $f_if_j =0$.
$Y^\lambda$ is finite dimensional and indecomposable, so $\End_{B_l}(Y^\lambda)$ is local. Thus  for all $i$, either $f_i$ or $1-f_i$ is a unit. To show that at least one $f_i$ is a unit, assume that $f_1,...,f_{s-1}$ are non-units. Then $\prod\limits_{i=1}^{s-1}(1-f_i) = 1 - f_1 - ... - f_{s-1} = f_s$ is a unit. 
We now assume without loss of generality that $f_1$ is a unit, in particular surjective. 

Let 
$$\begin{aligned} 
\varphi:& Ind_lY^\lambda &\longrightarrow & \, ind_lY^\lambda \\ 
& e_l \otimes y & \longmapsto & \, e_l \otimes  y 
\end{aligned}$$ 
and $\varphi':= \varphi \circ \iota_1 \circ \pi_1$ its restriction to $Y_1$. Then $$\varphi'(e_l \otimes y) = \varphi(e_l \otimes f_1(y)\, + \, lower \, terms) = e_l \otimes f_1(y),$$ since for $j \in J_{l-1}$ and $z \in Y^\lambda$, $\varphi(je_l \otimes z)=je_l \otimes z = 0 \in ind_lY^\lambda$. 
 Surjectivity of $f_1$ implies that the $A$-homomorphism $\varphi'$ is surjective, so $ind_lY^\lambda$ is a quotient of $Y_1$.

\begin{claim} $Y_1$ is the only summand of $Ind_lY^\lambda$ with quotient isomorphic to $ind_lY^\lambda$. \end{claim}

Suppose there is another summand $Y_2$ of $Ind_lY^\lambda$ such that there is an epimorphism $\psi:Ind_lY^\lambda \twoheadrightarrow ind_lY^\lambda$ with $\psi(Y_2)=ind_lY^\lambda$ and $\psi(Y_j)=0$ for all $j \neq 2$. By tensor-hom adjunction, $\psi$ is an element in $\Hom_A(Ind_lY^\lambda, ind_lY^\lambda)\simeq \Hom_{B_l}(Y^\lambda, \Hom_A(Ae_l, Ae_l \tensorover{e_lAe_l} Y^\lambda)) \simeq \Hom_{B_l}(Y^\lambda, e_lAe_l\tensorover{e_lAe_l}Y^\lambda) \simeq \End_{B_l}(Y^\lambda)$, so $\psi$ is given by $$\psi(e_l \otimes y)= e_l \otimes g(y)$$ for some $g \in \End_{B_{l}}(Y^\lambda)$. 
For $j \in J_{l-1}$ and $z \in Y^\lambda$, we have $$\psi(je_l \otimes z) = je_l \otimes g(z) = 0 \in ind_lY^\lambda.$$
The surjectivity of $\psi$ provides the existence of a preimage $v=\sum\limits_{i=1}^s (a_ie_l \otimes y_i) \in Y_2$ of $e_l  \otimes y \in ind_l Y^\lambda$ with $a_i \in A$ and $y_i \in Y^\lambda$ for all $i$. 
Since $e_lAe_l$ decomposes into $B_l \oplus e_lJ_{l-1}e_l$ as $(B_l,B_l)$-bimodule, we can write any element $e_lae_l \in e_lAe_l$ as $b + e_lje_l$ with $b \in B_l$ and $j \in J_{l-1}$. Thus $e_lv= e_l(\sum\limits_i a_ie_l \otimes y_i) = \sum\limits_i e_la_ie_l \otimes y_i = e_l \otimes w + \, lower \, terms$ for some $w \in Y^\lambda$. So $\psi$ sends $e_lv$ to $$\psi(e_lv)=\psi(e_l \otimes w + \, lower \, terms\,) = e_l \otimes g(w).$$
On the other hand, $$\psi(e_lv)=e_l\psi(v)=e_l(e_l\otimes y) = e_l \otimes y,$$ so $g(w)=y \neq 0$, 
hence $w\neq 0$. 
But $e_lv \in Y_2$ and $$\varphi'(e_lv)=\varphi'(e_l \otimes w + \, lower \, terms) = e_l \otimes f_1(w) \neq 0$$ since $w \neq 0$ and $f_1$ is a unit, in particular injective. So $\varphi'(Y_2)\neq 0$, which contradicts the definition of $\varphi'$.

\begin{claim} There is no summand of $Ind_lY^\mu$ with quotient $ind_lY^\lambda$ for $\mu \neq \lambda$. \end{claim}

Assume there is a direct summand $Y^\mu$ of $M^\lambda$ with $\mu > \lambda$ such that $ind_l Y^\lambda$ is a quotient of $Ind_l Y^\mu$. 
An arbitrary homomorphism $\Phi:Ind_lY^\mu \to ind_lY^\lambda$ is given by $\Phi(e_l \otimes y) = e_l \otimes \varphi(y)$ for some $\varphi \in \Hom_{B_l}(Y^\mu, Y^\lambda)$ by the adjunction $\Hom_A(Ind_lY^\mu, ind_lY^\lambda) \simeq \Hom_{B_l}(Y^\mu,Y^\lambda)$. $\Phi$ is surjective only if $\varphi$ is surjective\footnote{Assume there is $w \in Y^\lambda$ such that $\varphi(y) \neq w$ for all $y \in Y^\mu$. Let $\sum (a_ie_l \otimes y_i)$ be an arbitrary element of $Ind_lY^\mu$ and suppose that $\Phi(\sum (a_ie_l \otimes y_i)) = \sum (a_ie_l \otimes \varphi(y_i)) = e_l \otimes w$. Then $a_i = e_l$ for all $i$ and $\sum \varphi(y_i)=\varphi(\sum y_i) = w. \, \lightning$}. 

The rest of the proof can be copied from \cite{HP} in case $B_l = k\Sigma_{l'}$. We give here a similar proof for Iwahori-Hecke algebras $\mathcal{H}:=\mathcal{H}_{k,q}(\Sigma_l)$, inspired by the one for group algebras of symmetric groups, using notation from \cite{DJ}.

Suppose there is an epimorphism $\varphi: Y^\mu \twoheadrightarrow Y^\lambda$, which we extend to an epimorphism $\hat{\varphi}: M^\mu \to Y^\lambda$ such that $\hat{\varphi}$ is zero on all summands other than $Y^\mu$, i.e. $\hat{\varphi}$ is the projection from $M^\mu$ onto the direct summand $Y^\mu$, followed by the map $\varphi$. Recall (e.g. from \cite{DJ})
that $\mathcal{H}$ is generated by elements $T_\pi$, $\pi \in \Sigma_l$ and $M^\mu = \mathcal{H}x_\mu$, where $x_\mu = \sum\limits_{\omega \in \Sigma_\mu} T_\omega$. For $y_{\lambda'} = \sum\limits_{\omega \in \Sigma_{\lambda'}} (-q)^{l(\omega)}T_\omega$, where $l$ is the length function on symmetric group elements and $\lambda'$ is the conjugate of the partition $\lambda$, we have that $y_{\lambda'}T_\pi x_\mu \neq 0$ implies $\lambda=\lambda'' \geq \mu$ by \cite[Lemma 4.1]{DJ}. So for $\mu > \lambda$, we have $y_{\lambda'}M^\mu =0$. 
 Then $0=\hat{\varphi}(0)=\hat{\varphi}(y_{\lambda'}M^\mu )= y_{\lambda'}\hat{\varphi}(M^\mu) = y_{\lambda'}Y^\lambda$. But $y_{\lambda'}Y^\lambda$ contains the generator $y_{\lambda'}T_{w_\lambda}x_{\lambda} = z_\lambda$ of $S^\lambda$, in particular $y_{\lambda'}Y^\lambda \neq 0$.
 
This concludes the proof of Theorem \ref{def: Y}.
\end{proof}

\begin{df}
We denote the unique summand of $Ind_lY^\lambda$ with quotient $ind_lY^\lambda$ constructed above by $Y(l,\lambda)$, in analogy to \cite{HP}, and call it \textit{Young module} for $A$ with respect to $(l,\lambda)\in \Lambda_r$.
\end{df}

We now collect conditions for a Young module $Y(m,\mu)$ to appear as a summand of $M(l,\lambda)$. They generalise the conditions from \cite[Lemmas 17 and 18]{HP} for $A=B_k(r,\delta)$. The fact that these are the only direct summands of permutation modules is our main result (Theorem \ref{main thm}) and will be proven using results from the next Section. 

\begin{lem}\label{lemma: HP17}
 If $(l,\lambda),(m,\mu)\in \Lambda_r$ with $l < m$, then $Y(m,\mu)$ does not appear as a summand of $M(l,\lambda)$.
\end{lem}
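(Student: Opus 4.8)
The plan is to separate $M(l,\lambda)$ and $Y(m,\mu)$ by the layers they occupy, using the layer restriction functor $res_m$. The guiding observation is that $M(l,\lambda)=Ind_l M^\lambda$ is induced from a $B_l$-module and is therefore supported only in layers $n\le l$, whereas $Y(m,\mu)$ is a summand of $Ind_m Y^\mu$ whose top quotient $ind_m Y^\mu$ lives in layer $m>l$. I would make this precise by showing that $res_m$ annihilates $M(l,\lambda)$ but not $Y(m,\mu)$, and then invoke additivity of $res_m$ to rule out $Y(m,\mu)$ being a summand.

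First I would compute
$res_m M(l,\lambda)=res_m Ind_l M^\lambda\cong e_m(A/J_{m-1})e_l\tensorover{B_l} M^\lambda$,
where $e_m(A/J_{m-1})e_l$ denotes the image of $e_m A e_l$ in $e_m A/e_m J_{m-1}$. Since $l\le m-1$ we have $e_l\in J_l\subseteq J_{m-1}$, hence $Ae_l\subseteq J_{m-1}$ and therefore $e_m A e_l\subseteq e_m J_{m-1}$; so this image is zero and $res_m M(l,\lambda)=0$. Conceptually this merely reflects that $Ind_l M^\lambda$ is supported in layers $n\le l$ while $res_m$ isolates layer $m>l$.

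Next I would show $res_m Y(m,\mu)\neq 0$. By its construction, $Y(m,\mu)$ admits an epimorphism onto $ind_m Y^\mu$; since $res_m$ is right-exact (it is the tensor functor $e_m(A/J_{m-1})\tensorover{A}-$), it sends this epimorphism to an epimorphism $res_m Y(m,\mu)\twoheadrightarrow res_m ind_m Y^\mu\cong Y^\mu$, using that $res_m$ is left inverse to $ind_m$. As $Y^\mu\neq 0$, this forces $res_m Y(m,\mu)\neq 0$. Now suppose $Y(m,\mu)$ were a direct summand of $M(l,\lambda)$, say cut out by an idempotent in $\End_A(M(l,\lambda))$. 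The additive functor $res_m$ carries this idempotent to an idempotent on $res_m M(l,\lambda)=0$ whose image is $res_m Y(m,\mu)$, forcing $res_m Y(m,\mu)=0$ and contradicting the previous step.

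The argument is essentially layer bookkeeping, so the only genuine obstacle is getting the ideal manipulation in $res_m Ind_l=0$ exactly right — in particular the step $e_l\in J_{m-1}$, which is where the hypothesis $l<m$ enters — while everything else follows formally from the functorial properties already recorded. An alternative route, avoiding $res_m$, would note that a summand of $M(l,\lambda)$ isomorphic to $Y(m,\mu)$ produces a nonzero map $Ind_l M^\lambda\to ind_m Y^\mu$, and then use the $(Ind_l,Res_l)$-adjunction together with $Res_l\, ind_m Y^\mu\cong e_l\,ind_m Y^\mu=0$ (again because $e_l\in J_{m-1}$) to conclude $\Hom_A(Ind_l M^\lambda,ind_m Y^\mu)=0$ directly.
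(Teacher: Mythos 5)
Your proposal is correct, and your primary argument runs in the opposite direction from the paper's; in fact the paper's proof is exactly your ``alternative route''. The paper uses the $(Ind_l,Res_l)$-adjunction to get $\Hom_A(Ind_lM^\lambda, ind_mY^\mu) \simeq \Hom_{B_l}(M^\lambda, Res_l\, ind_mY^\mu)$ together with the vanishing $Res_l\, ind_mY^\mu \simeq e_l(A/J_{m-1})e_m \tensorover{e_mAe_m} Y^\mu = 0$ (because $e_l \in J_{m-1}$ when $l<m$), so no nonzero map $Ind_lM^\lambda \to ind_mY^\mu$ exists, while a summand $Y(m,\mu)$ would yield one via the epimorphism $Y(m,\mu) \twoheadrightarrow ind_mY^\mu$ from Theorem \ref{def: Y}. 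Your main argument instead restricts \emph{upward} with the layer restriction $res_m$: you show $res_m M(l,\lambda) \simeq e_m(A/J_{m-1})e_l \tensorover{B_l} M^\lambda = 0$ (the same key fact $e_l \in J_{m-1}$, used on the other side), while $res_m Y(m,\mu) \neq 0$ because $res_m$ is right-exact and left inverse to $ind_m$, hence surjects onto $Y^\mu \neq 0$; additivity of $res_m$ then forbids the summand. Both are valid layer-bookkeeping arguments with identical numerical input, and all the functorial facts you use (right-exactness of $res_m$, its left-inverse property, the adjunction) are recorded in Subsection \ref{subsec: functors}. The paper's Hom-vanishing is marginally stronger --- it rules out \emph{any} nonzero homomorphism $M(l,\lambda) \to ind_mY^\mu$, not merely split surjections, at the cost of invoking the adjunction --- whereas your $res_m$ version exhibits a concrete additive invariant separating $M(l,\lambda)$ from $Y(m,\mu)$ and avoids the adjunction entirely; it is also consistent with the vanishing $res_nInd_lM^\lambda = 0$ for $n>l$ that the paper itself notes later in the partition algebra section.
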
 

\begin{proof}
$Ind_l$ is left adjoint to $Res_l$, so $$\begin{aligned}\Hom_A(Ind_lM^\lambda, ind_mY^\mu)  & \simeq \Hom_{B_l}(M^\lambda, Res_lind_mY^\mu) \\  
& \simeq \Hom_{B_l}(M^\lambda, e_l(A/J_{m-1})e_m \tensorover{e_mAe_m} Y^\mu). \end{aligned}$$ For $l < m$, $e_l \in J_{m-1}$, so $Res_lind_mY^\mu=0$. Thus, there cannot be a non-zero map $$Ind_lM^\lambda \to Y(m,\mu)$$ since it would extend to a non-zero map $Ind_lM^\lambda \to ind_mY^\mu$.
\end{proof}

\begin{lem}\label{lemma: HP18}
 If $(l,\lambda),(l,\kappa) \in \Lambda_r$, then $Y(l,\lambda)$ occurs as a direct summand of $M(l,\kappa)$ if and only if $Y^\lambda$ is a direct summand of $M^{\kappa}$. This can only occur if $\lambda \geq \kappa$. 
\end{lem}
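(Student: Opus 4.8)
The plan is to push the entire question down to the level of the input algebra $B_l$, exploiting that $Ind_l$ is additive and that the layer restriction $res_l$ is left inverse to both $ind_l$ and $Ind_l$. The starting point is James' decomposition $M^\kappa \simeq \bigoplus_\mu (Y^\mu)^{a_\mu}$ with $a_\kappa = 1$, recalled in the proof of Theorem \ref{def: Y}; applying the additive functor $Ind_l$ gives $M(l,\kappa) = Ind_l M^\kappa \simeq \bigoplus_\mu (Ind_l Y^\mu)^{a_\mu}$.

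The key preliminary step is the identity $res_l Y(l,\lambda) \simeq Y^\lambda$, which is what ties the abstractly constructed summand $Y(l,\lambda)$ back to the partition $\lambda$. To prove it I would write $Ind_l Y^\lambda = Y(l,\lambda) \oplus Z$ and apply the additive functor $res_l$, using $res_l Ind_l Y^\lambda \simeq Y^\lambda$ to obtain $Y^\lambda \simeq res_l Y(l,\lambda) \oplus res_l Z$. Since $Y^\lambda$ is indecomposable, one of these summands vanishes; and since $Y(l,\lambda)$ surjects onto $ind_l Y^\lambda$ by construction while $res_l$ is right exact and left inverse to $ind_l$, the module $res_l Y(l,\lambda)$ surjects onto $res_l ind_l Y^\lambda \simeq Y^\lambda$ and is therefore nonzero. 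Hence $res_l Y(l,\lambda) \simeq Y^\lambda$.

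Granting this, both directions are short. The implication $\Leftarrow$ is pure additivity: if $Y^\lambda$ is a summand of $M^\kappa$ then $Ind_l Y^\lambda$ is a summand of $M(l,\kappa)$, and since $Y(l,\lambda)$ is by definition a summand of $Ind_l Y^\lambda$, it is a summand of $M(l,\kappa)$. For the implication $\Rightarrow$, suppose $Y(l,\lambda)$ is a summand of $M(l,\kappa) \simeq \bigoplus_\mu (Ind_l Y^\mu)^{a_\mu}$. Since $A$ is finite dimensional and $Y(l,\lambda)$ is indecomposable, Krull--Schmidt forces $Y(l,\lambda)$ to be a summand of some $Ind_l Y^\mu$ with $a_\mu \neq 0$. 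Applying $res_l$ and using $res_l Ind_l Y^\mu \simeq Y^\mu$ together with the identity above shows that $Y^\lambda \simeq res_l Y(l,\lambda)$ is a summand of the indecomposable module $Y^\mu$; hence $Y^\lambda \simeq Y^\mu$, and as distinct Young modules are non-isomorphic, $\lambda = \mu$. Thus $a_\lambda \neq 0$, i.e. $Y^\lambda$ is a summand of $M^\kappa$. The closing assertion that this can happen only for $\lambda \geq \kappa$ is then immediate from James' theorem on the summands of $M^\kappa$ (\cite{Jtriv}).

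The step I expect to be the main obstacle is the identity $res_l Y(l,\lambda) \simeq Y^\lambda$: everything else is additivity and Krull--Schmidt bookkeeping, but this identity is where the right-exactness of $res_l$ must be combined with its being left inverse to both $ind_l$ and $Ind_l$, and with the indecomposability of $Y^\lambda$, in order to recover the partition $\lambda$ from the module $Y(l,\lambda)$.
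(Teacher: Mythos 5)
Your proof is correct, but the decisive step is genuinely different from the paper's. The forward direction and the Krull--Schmidt reduction to ``$Y(l,\lambda)$ is a summand of some $Ind_lY^\mu$ with $a_\mu \neq 0$'' coincide exactly with the paper's argument. At that point the paper simply invokes Claim 3 of Theorem \ref{def: Y} (no summand of $Ind_lY^\mu$ has quotient $ind_lY^\lambda$ for $\mu\neq\lambda$), whose proof rests on the Dipper--James elements $y_{\lambda'}, x_\mu$ ruling out a surjection $Y^\mu \twoheadrightarrow Y^\lambda$. You instead bypass this combinatorial input entirely via the identity $res_lY(l,\lambda)\simeq Y^\lambda$, obtained by purely functorial bookkeeping: $res_l$ is additive, right exact, and left inverse to both $ind_l$ and $Ind_l$ (all asserted in Subsection \ref{subsec: functors}, the last following from $e_l(A/J_{l-1})\otimes_A Ae_l \simeq e_lAe_l/e_lJ_{l-1}e_l \simeq B_l$), so applying it to $Ind_lY^\lambda = Y(l,\lambda)\oplus Z$ and to the surjection $Y(l,\lambda)\twoheadrightarrow ind_lY^\lambda$ from Claim 1 of Theorem \ref{def: Y} forces $res_lZ=0$ and $res_lY(l,\lambda)\simeq Y^\lambda$; then $Y^\mu\simeq res_lInd_lY^\mu$ has $Y^\lambda$ as a direct summand, and indecomposability plus the non-isomorphy of Young modules gives $\mu=\lambda$. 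Each route buys something: yours needs only Claim 1 (existence of the surjection), keeps the lemma independent of the Hecke-algebra computation, and the identity $res_lY(l,\lambda)\simeq Y^\lambda$ is a useful byproduct --- for instance it yields Corollary \ref{cor: Young modules not isomorphic} in one line, since $Y(l,\lambda)\simeq Y(l,\kappa)$ would give $Y^\lambda\simeq Y^\kappa$. The paper's route is shorter at this point because Theorem \ref{def: Y} is already proved, and its Claim 3 is strictly stronger than your identity: it excludes \emph{any} summand of $Ind_lY^\mu$ having $ind_lY^\lambda$ as a quotient, not merely the particular module $Y(l,\lambda)$, which is what the uniqueness statements elsewhere in the paper require.
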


\begin{proof}
If $Y^\lambda$ is a direct summand of $M^\kappa$, then $Y(l,\lambda)$, as a direct summand of $Ind_lY^\lambda$, is a direct summand of $Ind_lM^\kappa = M(l,\kappa)$. 

If $Y(l,\lambda)$ is a direct summand of $M(l,\kappa)$ and $M^\kappa = \bigoplus (Y^\mu)^{a_\mu}$, then $Y(l,\lambda)$ is a summand of $Ind_lY^\mu$ for some $\mu$. 

It follows from Theorem \ref{def: Y}, Claim 3, that $\mu = \lambda$, so $Y^\lambda$ is a direct summand of $M^\kappa$.
\end{proof}

\begin{cor}\label{cor: Young modules not isomorphic}
If $(l,\lambda) \neq (l,\kappa)$, then $Y(l,\lambda) \nsimeq Y(l,\kappa)$. 
\end{cor}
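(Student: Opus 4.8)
The plan is to deduce the statement directly from Lemma \ref{lemma: HP18}, exploiting the antisymmetry of the dominance order. Since the two indices share their first coordinate $l$, the hypothesis $(l,\lambda)\neq(l,\kappa)$ is equivalent to $\lambda\neq\kappa$, and it suffices to derive a contradiction from a hypothetical isomorphism $Y(l,\lambda)\simeq Y(l,\kappa)$ of $A$-modules.

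First I would record that each Young module sits inside its own permutation module. By James' theorem the decomposition $M^\lambda=\bigoplus (Y^\mu)^{a_\mu}$ has $a_\lambda=1$, so $Y^\lambda$ is a direct summand of $M^\lambda$; the ``if'' direction of Lemma \ref{lemma: HP18} then shows that $Y(l,\lambda)$ is a direct summand of $M(l,\lambda)=Ind_lM^\lambda$. The identical argument, with $\kappa$ in place of $\lambda$, gives that $Y(l,\kappa)$ is a direct summand of $M(l,\kappa)$.

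Now I would assume $Y(l,\lambda)\simeq Y(l,\kappa)$. Because occurring as a direct summand is a condition up to isomorphism, and the Krull--Schmidt property holds for finite-dimensional modules over the finite-dimensional algebra $A$, the module $Y(l,\lambda)$ occurs as a summand of $M(l,\kappa)$ through its isomorphic copy $Y(l,\kappa)$. The last sentence of Lemma \ref{lemma: HP18} then forces $\lambda\geq\kappa$ in the dominance order. Interchanging the roles of $\lambda$ and $\kappa$, that is, using that $Y(l,\kappa)\simeq Y(l,\lambda)$ occurs as a summand of $M(l,\lambda)$, yields $\kappa\geq\lambda$. By antisymmetry of the dominance order this gives $\lambda=\kappa$, contradicting $\lambda\neq\kappa$; hence $Y(l,\lambda)\nsimeq Y(l,\kappa)$.

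I do not expect a genuine obstacle here, as the entire content is already packaged in Lemma \ref{lemma: HP18}: the corollary is essentially the remark that the lemma pins the index of a summand down on both sides of a dominance inequality. The only points deserving a word of care are the implicit appeal to Krull--Schmidt when transporting the phrase ``occurs as a summand'' across the isomorphism, and the bookkeeping observation that the two base facts $Y(l,\lambda)$ being a summand of $M(l,\lambda)$ and $Y(l,\kappa)$ being a summand of $M(l,\kappa)$ both come for free from $a_\lambda=a_\kappa=1$.
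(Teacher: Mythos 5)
Your proof is correct, and while it runs through the same key lemma as the paper, its endgame is genuinely different. The paper also argues by contradiction from a hypothetical isomorphism $Y(l,\lambda)\simeq Y(l,\kappa)$, and also begins with one application of Lemma \ref{lemma: HP18} (transporting the summand across the isomorphism to conclude that $Y^\kappa$ is a direct summand of $M^\lambda$), but it never touches the dominance inequality: instead it notes that $Ind_lY^\kappa$ is then a summand of $M(l,\lambda)$ containing the summand $Y(l,\kappa)\simeq Y(l,\lambda)$, whose quotient is $ind_lY^\lambda$, and this directly contradicts Claim 3 of Theorem \ref{def: Y} (no summand of $Ind_lY^\mu$ has quotient $ind_lY^\lambda$ for $\mu\neq\lambda$). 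You instead apply Lemma \ref{lemma: HP18} twice, symmetrically, and finish with antisymmetry of the dominance order on partitions of $l'$. Both are sound, and since the ``only if'' direction of Lemma \ref{lemma: HP18} itself rests on Claim 3, the two arguments have the same ultimate source; what yours buys is that it stays entirely at the level of the \emph{statement} of Lemma \ref{lemma: HP18}, never re-opening the proof of Theorem \ref{def: Y}, at the cost of invoking James' dominance constraint in both directions. The paper's version pinpoints the structural obstruction (uniqueness of the summand with prescribed quotient) in a single step, and its opening observation that $Y^\lambda\not\simeq Y^\kappa$ forces $Ind_lY^\lambda\nsimeq Ind_lY^\kappa$ via $res_l$ is a side remark your argument simply does not need. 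Your two points of care are judged correctly, with one refinement: transporting ``occurs as a direct summand'' across an isomorphism needs only that $M\simeq N\oplus X$ and $N\simeq N'$ imply $N'$ is isomorphic to a direct summand of $M$; full Krull--Schmidt is not required at that point (it is, however, used inside the proof of Lemma \ref{lemma: HP18}, where the summand is located in a single $Ind_lY^\mu$), and your base facts $a_\lambda=a_\kappa=1$ indeed come for free from James' theorem as quoted in the proof of Theorem \ref{def: Y}.
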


\begin{proof}
Let $(l,\lambda) \neq (l,\kappa)$. Then $Y^\lambda \not\simeq Y^\kappa$, see for example \cite[Section 7.6]{Martin}, so $Ind_lY^\lambda \nsimeq Ind_lY^\kappa$ since otherwise $res_lInd_lY^\lambda \simeq Y^\lambda$ would be isomorphic to $res_lInd_lY^\kappa \simeq Y^\kappa$. Assume that $Y(l,\lambda) \simeq Y(l,\kappa)$. Then $Y(l,\kappa)$ is a direct summand of $M(l,\lambda)$ and by Lemma \ref{lemma: HP18}, $Y^\kappa$ is a direct summand of $M^\lambda$. So $Ind_lY^\kappa$ is a summand of $M(l,\lambda)$ and has a summand $Y(l,\kappa)$ with quotient $ind_lY^\kappa$. But $Y(l,\kappa)$ is isomorphic to $Y(l,\lambda)$ with quotient $ind_lY^\lambda$, so $Ind_lY^\kappa$ has a direct summand with quotient isomorphic to $ind_lY^\lambda$ and $\kappa \neq \lambda$. This contradicts Claim 3 from Theorem \ref{def: Y}.
\end{proof}

\section{Properties}\label{sec: properties pm}
Each Young module $Y(l,\lambda)$ is a direct summand of the permutation module $M(l,\lambda)=Ind_lM^\lambda$ by definition. In this section, we show that the indecomposable direct summands of permutation modules are exactly the Young modules, as in the symmetric group case. The results extend the results on Brauer algebras stated in \cite{HP} to our setup. 
 
We give conditions under which the permutation modules for our cellularly stratified algebra $A$ admit a cell filtration in Subsection \ref{subsec: cell filtrations}. In Subsection \ref{subsec: rel proj}, we show  that permutation modules are relative projective in the subcategory $\mathcal{F}(\Theta)$ of cell filtered $A$-modules, provided a further condition is satisfied. Then the Young module $Y(l,\lambda)$ is  the relative projective cover of the cell module $\Theta(l,\lambda):= ind_lS_\lambda$ (Theorem \ref{thm: relative projective}). As a corollary of this, we recover a result about Schur-Weyl duality from \cite{HHKP} in Subsection \ref{subsec: SW duality}. Finally, we can prove Theorem \ref{main thm}, the decomposition of the permutation module $M(l,\lambda)$ into a direct sum of Young modules $Y(l,\lambda)$, in Subsection \ref{subsec: decomposition}.

A crucial point in the study of a category $\mathcal{F}(\Delta)$ of $\Delta$-filtered $A$-modules is that it is closed under direct summands if the set $\Delta$ with ordered index set $(I,\leq)$ forms a \emph{standard system}\footnote{cf. \cite[Section 3]{DR} or \cite[Definition 10.1]{HHKP}}, i.e. for all $l,m \in I$ 
\begin{itemize}
\item $\End_A(\Delta(l))$ is a division ring.
\item $\Hom_A(\Delta(l),\Delta(m))\neq 0$ implies $l \geq m$.
\item $\Ext_A^1(\Delta(l),\Delta(m)) \neq 0$ implies $l > m$.
\end{itemize}
The statement follows from \cite[Theorem 2]{Ringel}. 

\begin{lem}\label{lemma: standard system} 
Let $A$ be as defined in Subsection \ref{subsec: setup}. Let $\text{\textnormal{char}}k = p \in \mathbb{Z}_{\geq 0}\setminus\{2,3\}$ if the input algebras are group algebras of symmetric groups and let $h \geq 4$ if the input algebras $B_l$ are isomorphic to Hecke algebras $\mathcal{H}_{k,q}(\Sigma_l)$.
Then the cell modules $\Theta$ of $A$ form a standard system with respect to the order $\prec$ defined in Subsection \ref{subsec: def permutation modules etc}.
\end{lem}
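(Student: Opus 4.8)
The plan is to verify the three defining conditions of a standard system directly for the cell modules $\Theta = \{\Theta(l,\lambda) = ind_l S_\lambda\}$, translating each into a statement about the dual Specht modules $S_\lambda$ over the input algebras $B_l$ and then transporting it through the induction functor $ind_l$ using Proposition \ref{prop: HHKP properties ind}. The whole point is that $ind_l$ is a fully faithful, Ext-preserving embedding on each layer (parts \emph{(\ref{ind property Hom})} and \emph{(\ref{ind property Ext B})}), and that Hom- and Ext-groups between different layers are controlled by parts \emph{(\ref{ind property Hom directed})} and \emph{(\ref{ind property Ext directed})}. So the layered structure of $A$ reduces everything either to a single-layer $B_l$-computation or to an automatic vanishing.

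First I would check the endomorphism condition: by part \emph{(\ref{ind property Hom})}, $\End_A(\Theta(l,\lambda)) \simeq \End_{B_l}(S_\lambda)$, so it suffices that $\End_{B_l}(S_\lambda)$ is a division ring. Since $k$ is algebraically closed and the dual Specht module $S_\lambda$ for a symmetric group algebra or its Hecke algebra is known to have a one-dimensional, or at least local-with-trivial-radical, endomorphism ring under the stated restrictions on $p$ (resp. $h$), this endomorphism ring is just $k$. Next, for the Hom-directedness condition, I would split into two cases according to the order $\prec$. If the two indices lie in different layers $l \neq m$, parts \emph{(\ref{ind property Hom directed})} forces $\Hom_A(\Theta(l,\lambda),\Theta(m,\mu)) = 0$ unless $l > m$ (note $(n,\nu)\prec(l,\lambda)$ means $n \geq l$, so the order on layers is reversed), matching the required implication. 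If $l = m$, part \emph{(\ref{ind property Hom})} reduces the question to $\Hom_{B_l}(S_\lambda, S_\mu) \neq 0 \Rightarrow \lambda \geq \mu$ in dominance order, which is exactly the classical fact (James, Dipper--James) that homomorphisms between dual Specht modules go up the dominance order; this is where the hypotheses on $\text{\textnormal{char}}\,k$ and $h$ enter, since the Specht modules must form a standard system over $B_l$ itself.

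For the Ext condition I would proceed analogously. Across distinct layers, part \emph{(\ref{ind property Ext directed})} gives $\Ext^1_A(\Theta(l,\lambda),\Theta(m,\mu)) = 0$ whenever $l < m$; one still has to handle $l > m$, but there the strict inequality $(m,\mu) \prec (l,\lambda)$ already holds at the layer level, so no vanishing is needed. Within a single layer $l = m$, part \emph{(\ref{ind property Ext B})} gives $\Ext^1_A(\Theta(l,\lambda),\Theta(l,\mu)) \simeq \Ext^1_{B_l}(S_\lambda, S_\mu)$, reducing to the known fact that $\Ext^1_{B_l}(S_\lambda,S_\mu) \neq 0$ forces $\lambda > \mu$ strictly. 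Assembling these, in every case a nonvanishing Hom or Ext implies the correct inequality in $\prec$, and the endomorphism rings are division rings, so $\Theta$ is a standard system.

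The main obstacle is genuinely the single-layer input: that the dual Specht modules themselves form a standard system over $B_l = k\Sigma_{l'}$ or $\mathcal{H}_{k,q}(\Sigma_{l'})$, i.e. the division-ring property of $\End(S_\lambda)$ together with the dominance-order directedness of Hom and the strict directedness of Ext. These are precisely the statements that fail in small characteristic or for small $h$ (the exclusions $p \in \{2,3\}$ and the requirement $h \geq 4$), which is why those hypotheses appear; I expect the proof to cite the relevant results of James and of Dipper--James for this, rather than reprove them. The transport through $ind_l$ is then formal given Proposition \ref{prop: HHKP properties ind}, and the only subtlety to state carefully is the orientation of $\prec$ against the layer index, namely that larger layer index $l$ corresponds to being smaller in the order.
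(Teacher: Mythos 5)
Your overall strategy is sound, and it is essentially a by-hand unwinding of the paper's proof: the paper quotes the single-layer input (dual Specht modules form standard systems over $k\Sigma_{l'}$, resp.\ $\mathcal{H}_{k,q}(\Sigma_{l'})$, by \cite[Proposition 4.2.1]{HN} together with \cite[Corollary 13.17]{JamesLectureNotes}, resp.\ \cite[Exercise 4.11]{Mathas}) and then cites \cite[Theorem 10.2 (a)]{HHKP} for the assembly across layers --- which is exactly the transport through $ind_l$ via parts \emph{(\ref{ind property Hom})}, \emph{(\ref{ind property Ext B})}, \emph{(\ref{ind property Hom directed})} and \emph{(\ref{ind property Ext directed})} of Proposition \ref{prop: HHKP properties ind} that you carry out explicitly. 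So no idea is missing; also note that the characteristic/$h$ hypotheses enter through Hemmer--Nakano's result rather than through James or Dipper--James directly.

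There is, however, a genuine flaw in the write-up: your two halves are oriented by opposite conventions, so no single reading makes both correct. Within a layer you assert $\Hom_{B_l}(S_\lambda,S_\mu)\neq 0 \Rightarrow \lambda \geq \mu$ and $\Ext^1_{B_l}(S_\lambda,S_\mu)\neq 0 \Rightarrow \lambda > \mu$; for \emph{dual} Specht modules these are false as stated --- they are the directions valid for Specht modules $S^\lambda$. For instance, in characteristic $p\geq 5$ one has $\Hom_{k\Sigma_p}(S_{(p-1,1)},S_{(p)}) \simeq \Hom_{k\Sigma_p}(k,S^{(p-1,1)}) \neq 0$ although $(p-1,1)$ is strictly dominated by $(p)$. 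The correct dual-Specht directions are $\lambda \leq \mu$ (strict for $\Ext^1$), i.e.\ nonzero maps go \emph{up} the dominance order from source to target, as your own prose actually says. Correspondingly, in your cross-layer $\Ext$ case the claim that ``$(m,\mu)\prec(l,\lambda)$ already holds at the layer level'' when $l>m$ is backwards under the paper's definition of $\prec$: for $l>m$ one has $(l,\lambda)\prec(m,\mu)$. The consistent convention --- the one forced by parts \emph{(\ref{ind property Hom directed})} and \emph{(\ref{ind property Ext directed})} together with $\prec$ as defined --- is the Dlab--Ringel one: $\Hom_A(\Theta(a),\Theta(b))\neq 0$ implies $a\preceq b$ and $\Ext^1_A(\Theta(a),\Theta(b))\neq 0$ implies $a\prec b$; under it your cross-layer Hom case is correct and your within-layer claims must be flipped. (The bullet-list axioms displayed in Section \ref{sec: properties pm} are themselves printed with the reversed inequalities relative to this convention, which may be the source of the mix-up.) Once one fixed orientation is used throughout, your argument goes through and amounts to reproving \cite[Theorem 10.2 (a)]{HHKP} in this special case.
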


\begin{proof}
Dual Specht modules for symmetric groups form a standard system by \cite[Proposition 4.2.1]{HN} and \cite[Corollary 13.17]{JamesLectureNotes}. Dual Specht modules for Iwahori-Hecke algebras of symmetric groups form a standard system by \cite[Proposition 4.2.1]{HN} and \cite[Exercise 4.11]{Mathas}. The statement follows from \cite[Theorem 10.2 (a)]{HHKP}. 
\end{proof}

\begin{assumptions}\label{assumptions}
We give names to the following assumptions that we make on $A$ in order to prove the desired properties for permutation modules and Young modules. Furthermore, we often assume that $\text{\textnormal{char}} k \in \mathbb{Z}_{\geq 0}\setminus\{2,3\}$ (or $h  \geq 4$, in case the $B_l$ are Iwahori-Hecke algebras) to be able to use Lemma \ref{lemma: standard system}. 

Let $A$ be as defined in Subsection \ref{subsec: setup} and let $n \leq l$. 
\begin{enumerate}[(I)]
\item \begin{enumerate}
\item $e_lAe_l \simeq B_l \oplus e_lJ_{l-1}e_l$ as $(B_l,B_l)$-bimodules and \label{assumptionpart: eAe}
\item $J_ne_l \simeq J_{n-1}e_l \oplus (J_n/J_{n-1})e_l$ as right $B_l$-modules.\label{assumptionpart: Je}
\end{enumerate} \label{assumption: J_ne_l}
\item $(J_n/J_{n-1})e_l \simeq (A/J_{n-1})e_n \tensorover{e_nAe_n} e_n(A/J_{n-1})e_l$ as right $B_l$-modules. \label{assumption: aufplustern}
\item Layer-removing restriction to $B_n-\mod$ of a permutation module from layer $l$ is dual Specht filtered: $$res_nInd_lM^\lambda \simeq e_n(A/J_{n-1})e_l \tensorover{B_l} M^\lambda \in \mathcal{F}_n(S)$$ \label{assumption: restriction of pm}
\item Classical restriction to $B_l-\mod$ of a cell module from layer $n$ is dual Specht filtered: $$Res_lind_nS_\nu \simeq e_l(A/J_{n-1})e_n \tensorover{B_n} S_\nu \in \mathcal{F}_l(S)$$ \label{assumption: restriction of cell modules}
\end{enumerate}
\end{assumptions}

\begin{rk} Assumption (\ref{assumptionpart: eAe}) is the assumption we made in Theorem \ref{def: Y} in order to define Young modules for $A$.   

Assumption (\ref{assumption: restriction of cell modules}) implies that for any $X \in \mathcal{F}_n(S)$, $Res_lind_nX \in \mathcal{F}_l(S)$: The functor $ind_n$ is exact and sends dual Specht modules to cell modules, so $ind_nX$ has a cell filtration. $Res_l$ is exact, so $Res_lind_nX$ has a filtration by modules of the form $Res_lind_nS_\nu \in \mathcal{F}_l(S)$. The statement follows since $\mathcal{F}_l(S)$ is extension-closed.  
\end{rk}

\begin{lem}\label{lemma: replace assumption}
Instead of \textnormal{(\ref{assumption: aufplustern})}, we can assume \newline

\indent \textnormal{(II')} $(J_n/J_{n-1})e_l \simeq B_n \otimes_k V_n \otimes_k V_n^l$ as vector spaces. 

\end{lem}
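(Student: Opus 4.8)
The plan is to recognise both assumptions as two faces of the single statement that one natural multiplication map is bijective, and then to settle that bijectivity by a dimension count. The map I have in mind is
$$\mu \colon (A/J_{n-1})e_n \tensorover{e_nAe_n} e_n(A/J_{n-1})e_l \longrightarrow (J_n/J_{n-1})e_l, \qquad x \otimes y \longmapsto xy,$$
a well-defined homomorphism of right $B_l$-modules. It is always \emph{surjective}, since $(A/J_{n-1})e_n \cdot e_n(A/J_{n-1}) = (Ae_nA + J_{n-1})/J_{n-1} = J_n/J_{n-1}$ and then multiplication by $e_l$ hits all of $(J_n/J_{n-1})e_l$. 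As everything is finite dimensional, assumption (II) is equivalent to $\mu$ being an isomorphism: an abstract $B_l$-isomorphism as in (II) forces the source and the target of $\mu$ to have the same dimension, and a surjection between spaces of equal dimension is already bijective. Thus the whole lemma reduces to showing that the source of $\mu$ has dimension $\dim_k B_n \cdot \dim_k V_n \cdot \dim_k V_n^l$, which is exactly what (II') records about the target.

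To compute that dimension I would first note that both tensor factors are in fact $B_n$-modules: the ideal $e_nJ_{n-1}e_n$ annihilates each of them, because a product of a layer-$n$ element with an element of $J_{n-1}$ lies in layers strictly below $n$ and so vanishes modulo $J_{n-1}$. Hence the tensor product over $e_nAe_n$ may be taken over $B_n \simeq e_n(A/J_{n-1})e_n$. Next, since right multiplication by $e_n$ sends every layer $m \geq n$ back into layer $n$, one has $(A/J_{n-1})e_n = (J_n/J_{n-1})e_n$, and a direct computation inside the layer $J_n/J_{n-1} = B_n \otimes_k V_n \otimes_k V_n$, using the within-layer multiplication together with $\varphi(v_n,u_n)=1$, gives $(A/J_{n-1})e_n \simeq V_n \otimes_k B_n$ as a right $B_n$-module, free of rank $\dim_k V_n$. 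Therefore the source of $\mu$ is isomorphic to $V_n \otimes_k e_n(A/J_{n-1})e_l$, of dimension $\dim_k V_n \cdot \dim_k e_n(A/J_{n-1})e_l$.

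The remaining and most delicate input is $\dim_k e_n(A/J_{n-1})e_l$, since naively it involves the awkward cross-layer multiplication of a layer-$n$ element by $e_l$, which sits in the higher layer $l$. The device that avoids this is the relation $e_n = e_l e_n = e_n e_l$, valid because $l \geq n$: it yields $e_n(A/J_{n-1})e_l = e_n\bigl(e_l(J_n/J_{n-1})e_l\bigr)$, and $e_l(J_n/J_{n-1})e_l$ is already understood, namely it is the $n$-th layer $B_n \otimes_k V_n^l \otimes_k V_n^l$ of the stratified algebra $e_lAe_l$ (using $e_lJ_ne_l = (e_lAe_l)e_n(e_lAe_l)$ exactly as in the proof of Proposition \ref{prop: eAe stratified}). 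A short computation with left multiplication by $e_n$, again invoking $\varphi(v_n,u_n)=1$ and $u_n,v_n \in V_n^l$, then gives $e_n(A/J_{n-1})e_l = B_n \otimes_k \langle u_n\rangle_k \otimes_k V_n^l$, of dimension $\dim_k B_n \cdot \dim_k V_n^l$. Combining the pieces, the source of $\mu$ has dimension $\dim_k V_n \cdot \dim_k B_n \cdot \dim_k V_n^l$, so $\mu$ is an isomorphism precisely when (II') holds; this shows (II') implies (II) (and, reading the equivalences backwards, that the two assumptions are in fact equivalent). I expect the main obstacle to be exactly this cross-layer computation of $e_n(A/J_{n-1})e_l$, which is why I would push it inside $e_lAe_l$ through the idempotent relations instead of expanding the iterated-inflation multiplication directly.
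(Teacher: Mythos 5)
Your proposal is correct and follows essentially the same route as the paper: the same multiplication map $(A/J_{n-1})e_n \tensorover{B_n} e_n(A/J_{n-1})e_l \to (J_n/J_{n-1})e_l$, its surjectivity, and a dimension count of the source equal to $\dim B_n \cdot \dim V_n \cdot \dim V_n^l$ obtained through the cellularly stratified structure of $e_lAe_l$ from Proposition \ref{prop: eAe stratified}. The only differences are cosmetic: where the paper cites \cite[Proposition 3.5]{HHKP} for the freeness of $(A/J_{n-1})e_n$ and of $e_n(A/J_{n-1})e_l = e_n(e_lAe_l/e_lJ_{n-1}e_l)$ over $B_n$, you verify these by direct within-layer computations using $\varphi(v_n,u_n)=1$, and you additionally note the (unneeded but valid) converse implication making (II) and (II') equivalent.
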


\begin{proof}
By Proposition \ref{prop: eAe stratified}, the algebra $e_lAe_l$ is cellularly stratified with idempotents $e_n =1_{B_n} \otimes u_n \otimes v_n \in B_n \otimes_k V_n^l \otimes_k V_n^l \subseteq B_n \otimes_k V_n \otimes_k V_n$. Then $e_n(A/J_{n-1})e_l = e_n(e_lAe_l/e_lJ_{n-1}e_l)$ is free of rank $\dim V_n^l$ over $B_n$ by \cite[Proposition 3.5]{HHKP} and $ ind_n(e_n(A/J_{n-1})e_l)\simeq (A/J_{n-1})e_n \tensorover{B_n} e_n(A/J_{n-1})e_l \simeq \! \bigoplus\limits_{i=1}^{\dim V_n^l}\!(A/J_{n-1})e_n$ as left $A$-modules. 
Hence, $\dim (ind_n(e_n(A/J_{n-1})e_l)) = \dim ((A/J_{n-1})e_n) \cdot \dim V_n^l = \dim B_n \cdot \dim V_n \cdot \dim V_n^l$, since $(A/J_{n-1})e_n$ is free of rank $\dim V_n$ over $B_n$.

The multiplication map $$\begin{aligned}
(A/J_{n-1})e_n \tensorover{B_n} e_n(A/J_{n-1})e_l & \longrightarrow  (J_n/J_{n-1})e_l \\
(a+J_{n-1})e_n \otimes e_n(b+J_{n-1})e_l & \longmapsto  (ae_nb+J_{n-1})e_l \end{aligned}$$
is an epimorphism of $(A,B_l)$-bimodules and $\dim (ind_n(e_n(A/J_{n-1})e_l)) = \dim V_n^l \cdot \dim V_n \cdot \dim B_n = \dim ((J_n/J_{n-1})e_l)$ by (II'), so (\ref{assumption: aufplustern}) is satisfied.  
\end{proof}

\subsection{Cell filtrations}\label{subsec: cell filtrations}
\begin{thm} \label{thm: cell filtration} Assume that $A$ satisfies \textnormal{(\ref{assumption: J_ne_l}),(\ref{assumption: aufplustern})} and \textnormal{(\ref{assumption: restriction of pm})}. Then the permutation module $M(l,\lambda)$ has a filtration by cell modules. \newline 
If, in addition, $\text{\textnormal{char}}k \in \mathbb{Z}_{\geq 0}\setminus\{2,3\}$ or $h \geq 4$, then the direct summands of $Ind_lM^\lambda$ have cell filtrations. 
\end{thm}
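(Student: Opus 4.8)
The plan is to prove the first assertion by constructing an explicit cell filtration of $M(l,\lambda)=Ind_lM^\lambda=Ae_l\tensorover{B_l}M^\lambda$ out of the layer structure of $A$, and then to obtain the second assertion as a formal consequence via the standard-system machinery. First I would set up the layer filtration of $M(l,\lambda)$. For $n\le l$ one has $e_ne_l=e_n$, so $J_n\cdot Ind_lM^\lambda=J_nAe_l\tensorover{B_l}M^\lambda=J_ne_l\tensorover{B_l}M^\lambda$, and the chain $0\subseteq\cdots\subseteq J_{n-1}e_l\tensorover{B_l}M^\lambda\subseteq J_ne_l\tensorover{B_l}M^\lambda\subseteq\cdots\subseteq J_le_l\tensorover{B_l}M^\lambda=Ae_l\tensorover{B_l}M^\lambda$ is a filtration of $M(l,\lambda)$ (it stabilises at $n=l$ since $J_ne_l=Ae_l$ for $n\ge l$). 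The essential input here is Assumption (I)(b): iterating the splitting $J_ne_l\simeq J_{n-1}e_l\oplus(J_n/J_{n-1})e_l$ of right $B_l$-modules gives $Ae_l\simeq\bigoplus_n(J_n/J_{n-1})e_l$, so each short exact sequence $0\to J_{n-1}e_l\to J_ne_l\to(J_n/J_{n-1})e_l\to 0$ of right $B_l$-modules splits. Consequently, applying $-\tensorover{B_l}M^\lambda$ stays exact; this simultaneously confirms that $J_ne_l\tensorover{B_l}M^\lambda$ really is a submodule of $M(l,\lambda)$ (the inclusions are split injective) and identifies the $n$-th subquotient of the filtration as $(J_n/J_{n-1})e_l\tensorover{B_l}M^\lambda$.

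The next step is to recognise each subquotient as a cell-filtered $A$-module. By Assumption (II), realised by the multiplication map and hence as an $(A,B_l)$-bimodule isomorphism, $(J_n/J_{n-1})e_l\simeq(A/J_{n-1})e_n\tensorover{e_nAe_n}e_n(A/J_{n-1})e_l$; tensoring over $B_l$ with $M^\lambda$ yields $(J_n/J_{n-1})e_l\tensorover{B_l}M^\lambda\simeq(A/J_{n-1})e_n\tensorover{e_nAe_n}\bigl(e_n(A/J_{n-1})e_l\tensorover{B_l}M^\lambda\bigr)$ as left $A$-modules. The inner factor is exactly $res_nInd_lM^\lambda$, which by Assumption (III) lies in $\mathcal{F}_n(S)$, i.e.\ is dual-Specht filtered, and the outer factor is the layer induction $ind_n$. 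Since $ind_n$ is exact and carries each dual Specht module $S_\nu$ to the cell module $\Theta(n,\nu)=ind_nS_\nu$, it sends a dual-Specht-filtered $B_n$-module to a cell-filtered $A$-module. Hence the $n$-th subquotient is isomorphic to $ind_n(res_nInd_lM^\lambda)$ and admits a cell filtration. Refining the layer filtration of $M(l,\lambda)$ by these cell filtrations produces a cell filtration of $M(l,\lambda)$, which proves the first assertion.

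For the second assertion I would feed this into the standard-system argument. Under the extra hypothesis $\textnormal{char}\,k\in\mathbb{Z}_{\geq 0}\setminus\{2,3\}$ (respectively $h\ge 4$ in the Hecke case), Lemma \ref{lemma: standard system} shows the cell modules $\Theta$ form a standard system for the order $\prec$, and by \cite[Theorem 2]{Ringel}, as recorded in the discussion preceding that lemma, the category $\mathcal{F}(\Theta)$ of cell-filtered modules is then closed under direct summands. Since $Ind_lM^\lambda\in\mathcal{F}(\Theta)$ by the first part, every direct summand of $Ind_lM^\lambda$ again lies in $\mathcal{F}(\Theta)$ and therefore admits a cell filtration, as claimed.

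The main obstacle is the bookkeeping in the second paragraph. One must check that the surjections arising from tensoring are genuinely the connecting maps of the layer filtration, and that Assumptions (II) and (III) interlock so that the subquotient is identified \emph{as a left $A$-module} with $ind_n(res_nInd_lM^\lambda)$, not merely abstractly isomorphic to some cell-filtered module; this is why it matters that the isomorphism in (II) is the multiplication map, which respects the left $A$-action. Once the subquotients are pinned down in this form, exactness of $ind_n$ together with the extension-closedness of $\mathcal{F}_n(S)$ makes the cell filtration automatic, and the passage to direct summands is then a direct citation requiring no further computation.
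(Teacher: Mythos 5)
Your proposal is correct and follows essentially the same route as the paper's proof: the layer filtration of $Ae_l\otimes_{B_l}M^\lambda$ whose inclusions split by assumption (I)(b), identification of the $n$-th subquotient with $ind_n(res_nInd_lM^\lambda)$ via assumption (II), cell-filteredness of the subquotients from assumption (III) together with exactness of $ind_n$, and then closure of $\mathcal{F}(\Theta)$ under direct summands via the standard system (Lemma \ref{lemma: standard system}) and Ringel's theorem. Your explicit observation that the isomorphism in (II) must be compatible with the left $A$-action---which holds because it is realised by the multiplication map, as in Lemma \ref{lemma: replace assumption}---is a careful point that the paper's statement of (II) (given only as right $B_l$-modules) leaves implicit.
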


\begin{proof}
$A = J_r \supset J_{r-1} \supset ... \supset J_1 \supset J_0 = 0$ is a filtration of $A$ (with quotients isomorphic to $B_n \otimes_k V_n \otimes_k V_n$), so we have short exact sequences $$0 \to J_{n-1} \to J_n \to J_n/J_{n-1} \to 0$$ of $(A,A)$-bimodules for $1 \leq n \leq r$. Application of the exact restriction functor $-\tensorover{A} Ae_l$ gives exact sequences $$0 \to  J_{n-1}e_l \to J_ne_l \to (J_n/J_{n-1})e_l \to 0$$ of $(A,e_lAe_l)$-bimodules for $n \leq l$, which are split exact as sequences of right $B_l$-modules by assumption (\ref{assumptionpart: Je}). Hence, we get exact sequences 
$$0\to J_{n-1}e_l \tensorover{B_l} M^\lambda \to J_ne_l \tensorover{B_l} M^\lambda \to (J_n/J_{n-1})e_l \tensorover{B_l} M^\lambda \to 0$$ of left $A$-modules, which give rise to a filtration $$Ae_l \tensorover{B_l} M^\lambda \supset J_{l-1}e_l \tensorover{B_l} M^\lambda \supset ...\supset J_1e_l \tensorover{B_l} M^\lambda \supset 0$$  of $M(l,\lambda)=Ind_l M^\lambda$ 
with quotients $M^n(l,\lambda):= (J_n/J_{n-1})e_l \tensorover{B_l} M^\lambda$, the $n\th$ layer of $M(l,\lambda)$. Assumption (\ref{assumption: aufplustern}) gives
$$\begin{aligned} M^n(l,\lambda) & \simeq ind_n(e_n(A/J_{n-1})e_l) \tensorover{B_l} M^\lambda \\ & \simeq 
ind_n(e_n(A/J_{n-1})e_l \tensorover{B_l} M^\lambda) \\ & \simeq ind_n(res_nInd_lM^\lambda). \end{aligned}$$
                
By assumption (\ref{assumption: restriction of pm}), $res_nInd_lM^\lambda \in \mathcal{F}_n(S)$. The functor $ind_n$ is exact and sends dual Specht modules to cell modules by Proposition \ref{prop: HHKP properties ind}, so $ M^n(l,\lambda) \in \mathcal{F}(\Theta)$ for all $1 \leq n \leq l$, in particular $M(l,\lambda)=M^l(l,\lambda) \in \mathcal{F}(\Theta)$. \\

If $\text{\textnormal{char}}k$ is different from $2$ and $3$, then the cell modules of $A$ form a standard system by Lemma \ref{lemma: standard system}. In this case, $\mathcal{F}(\Theta)$ is closed under direct summands by \cite[Theorem 2]{Ringel}, so all direct summands of $Ind_lM^\lambda$, in particular the Young modules $Y(l,\lambda)$, admit cell filtrations.
\end{proof}

\subsection{Relative projectivity}\label{subsec: rel proj}
An important property of the permutation modules ${M^\lambda \in B_l-\mod}$ is their relative projectivity in the category $\mathcal{F}_l(S)$, as shown by Hemmer and Nakano in \cite[Proposition 4.1.1]{HN}, in case $h\geq 4$. This property is translated to the permutation modules $M(l,\lambda)$ of $A$, in case the conditions (\ref{assumption: J_ne_l}) to (\ref{assumption: restriction of cell modules}) are satisfied. Furthermore, the Young modules are relative projective covers of the cell modules.

\begin{thm} \label{thm: relative projective} Assume that $A$ satisfies \textnormal{(\ref{assumption: J_ne_l})} to \textnormal{(\ref{assumption: restriction of cell modules})}. Then the permutation module $Ind_lM^\lambda$ is relative projective in $\mathcal{F}(\Theta)$. If, in addition, $\text{\textnormal{char}}k \in \mathbb{Z}_{\geq 0}\setminus\{2,3\}$ (or $h \geq 4$), then all direct summands of $Ind_lM^\lambda$ are relative projective in $\mathcal{F}(\Theta)$. Furthermore, $Y(l,\lambda)$ is the relative projective cover of $\Theta(l,\lambda)$ in the category $\mathcal{F}(\Theta)$ of cell filtered modules.  
\end{thm}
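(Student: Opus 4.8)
The plan is to establish relative projectivity of $Ind_lM^\lambda$ by showing directly that every short exact sequence $0 \to N \to E \xrightarrow{p} Ind_lM^\lambda \to 0$ of $A$-modules with $N \in \mathcal{F}(\Theta)$ splits; since $Ind_lM^\lambda \in \mathcal{F}(\Theta)$ by Theorem \ref{thm: cell filtration}, this yields $\Ext_A^1(Ind_lM^\lambda,N)=0$ for all $N \in \mathcal{F}(\Theta)$. First I would apply the exact functor $Res_l$, obtaining $0 \to Res_lN \to Res_lE \xrightarrow{Res_lp} Res_lInd_lM^\lambda \to 0$ of $B_l$-modules. Assumption \textnormal{(\ref{assumptionpart: eAe})} gives $Res_lInd_lM^\lambda \simeq e_lAe_l \tensorover{B_l} M^\lambda \simeq M^\lambda \oplus (e_lJ_{l-1}e_l \tensorover{B_l} M^\lambda)$, and the canonical inclusion $s\colon M^\lambda \hookrightarrow Res_lInd_lM^\lambda$ onto the first summand is precisely the unit $\eta$ of the adjunction $(Ind_l,Res_l)$, since $\eta(m)=e_l \otimes m$ corresponds under \textnormal{(\ref{assumptionpart: eAe})} to $1_{B_l} \otimes m$. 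Pulling the sequence back along $s$ produces $0 \to Res_lN \to E' \to M^\lambda \to 0$.

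The decisive input is that this pulled-back sequence lies in $\mathcal{F}_l(S)$ and splits. Indeed $Res_lN \in \mathcal{F}_l(S)$: the cell filtration of $N$ has layers $ind_mS_\mu$, the exact functor $Res_l$ sends these to $Res_lind_mS_\mu \in \mathcal{F}_l(S)$ by assumption \textnormal{(\ref{assumption: restriction of cell modules})}, and $\mathcal{F}_l(S)$ is extension-closed; as $M^\lambda \in \mathcal{F}_l(S)$ too, $E'$ is an extension of objects of $\mathcal{F}_l(S)$, hence in $\mathcal{F}_l(S)$. Now the relative projectivity of $M^\lambda$ in $\mathcal{F}_l(S)$ (Hemmer--Nakano, \cite[Proposition 4.1.1]{HN}, valid under our hypothesis on $\text{char}k$ resp.\ $h$) gives $\Ext_{B_l}^1(M^\lambda,Res_lN)=0$, so the sequence splits via a section $\sigma$; composing with $E' \to Res_lE$ yields $t\colon M^\lambda \to Res_lE$ with $Res_lp \circ t = s = \eta$. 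Transporting $t$ across $\Hom_{B_l}(M^\lambda,Res_lE) \simeq \Hom_A(Ind_lM^\lambda,E)$ gives $\tilde{t}\colon Ind_lM^\lambda \to E$, and naturality of the adjunction turns $Res_lp \circ t = \eta$ into $p \circ \tilde{t} = \id_{Ind_lM^\lambda}$; thus $p$ splits. I expect the main obstacle to be the bookkeeping identifying the adjunction unit $\eta$ with the bimodule splitting from \textnormal{(\ref{assumptionpart: eAe})}, since the entire transport argument rests on this compatibility.

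For the second statement, assume in addition $\text{char}k \in \mathbb{Z}_{\geq 0}\setminus\{2,3\}$ (or $h \geq 4$). Then $\Theta$ is a standard system by Lemma \ref{lemma: standard system}, so $\mathcal{F}(\Theta)$ is closed under direct summands by \cite[Theorem 2]{Ringel}. Hence any direct summand $M$ of $Ind_lM^\lambda$ lies in $\mathcal{F}(\Theta)$, and since $\Ext_A^1(-,N)$ is additive, $\Ext_A^1(M,N)$ is a direct summand of $\Ext_A^1(Ind_lM^\lambda,N)=0$; thus $M$, in particular $Y(l,\lambda)$, is relative projective in $\mathcal{F}(\Theta)$.

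Finally, to identify $Y(l,\lambda)$ as the relative projective cover of $\Theta(l,\lambda)=ind_lS_\lambda$, I would produce the required epimorphism by composing the surjection $Y(l,\lambda) \twoheadrightarrow ind_lY^\lambda$ from Theorem \ref{def: Y} with $ind_l$ applied to the quotient map $Y^\lambda \twoheadrightarrow S_\lambda$, giving $g\colon Y(l,\lambda) \twoheadrightarrow \Theta(l,\lambda)$. To see $\ker g \in \mathcal{F}(\Theta)$, observe that the kernel $L$ of $Y(l,\lambda) \twoheadrightarrow ind_lY^\lambda$ equals $J_{l-1}\cdot Y(l,\lambda)$, and since the layer filtration $J_n \cdot Ind_lY^\lambda$ is stable under the idempotent projecting onto the summand $Y(l,\lambda)$, $L$ inherits a filtration whose subquotients are direct summands of the layers $ind_n(res_nInd_lY^\lambda) \in \mathcal{F}(\Theta)$; hence $L \in \mathcal{F}(\Theta)$. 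Moreover $\ker(ind_lY^\lambda \twoheadrightarrow ind_lS_\lambda)=ind_l(\ker(Y^\lambda \to S_\lambda))$ lies in $\mathcal{F}(\Theta)$, because $\ker(Y^\lambda \to S_\lambda) \in \mathcal{F}_l(S)$ (the dual Specht filtration of $Y^\lambda$ has $S_\lambda$ on top) and $ind_l$ sends dual Specht modules to cell modules. As $\ker g$ is an extension of these two objects of $\mathcal{F}(\Theta)$, it lies in $\mathcal{F}(\Theta)$. Minimality is then immediate: $Y(l,\lambda)$ is indecomposable, so $\ker g$ contains no nonzero direct summand, which is exactly the covering condition.
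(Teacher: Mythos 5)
Your proposal is correct and follows essentially the same route as the paper's proof: restrict along the exact functor $Res_l$, use assumption (\ref{assumption: restriction of cell modules}) to place $e_lN$ in $\mathcal{F}_l(S)$, invoke the Hemmer--Nakano relative projectivity of $M^\lambda$, and transport the splitting back through the adjunction $(Ind_l,Res_l)$ -- your pullback along the unit, the additivity of $\Ext^1_A$ for summands, and the identification $\ker\phi = J_{l-1}\cdot Y(l,\lambda)$ are only minor repackagings of the paper's long exact $\Hom$-sequence, explicit lifting, and snake-lemma arguments. In particular, your remaining key inputs (Ringel's theorem for summand-closure of $\mathcal{F}(\Theta)$, the cell filtration of $J_{l-1}e_l \tensorover{B_l} Y^\lambda$ as a summand of a cell filtered layer, extension-closure to get $\ker g \in \mathcal{F}(\Theta)$, and indecomposability of $Y(l,\lambda)$ for minimality) coincide with those of the paper.
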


\begin{proof}
By Theorem \ref{thm: cell filtration}, $M(l,\lambda)$ and all its direct summands (provided $\text{\textnormal{char}}k \neq 2,3$ or $h \geq 4$) are in $\mathcal{F}(\Theta)$ if $A$ satisfies conditions (\ref{assumption: J_ne_l}) to (\ref{assumption: restriction of pm}). We have to show that $\Ext^1_A(M(l,\lambda),X)=0$ for all $X \in \mathcal{F}(X)$. Let $X \in \mathcal{F}(\Theta)$ and let $$(\ast): \, 0 \to X \to Y \to Ind_lM^\lambda \to 0$$ be a short exact sequence in $\Ext^1_A(M(l,\lambda),X)$.

Apply the exact functor $Res_l$ on $(\ast)$ to get a short exact sequence $$(\ast \ast): \, 0 \to e_lX \to e_lY \to e_lAe_l \tensorover{B_l} M^\lambda \to 0$$ in $B_l-\mod$. Now we apply the left exact functor $\Hom_{B_l}(M^\lambda,-)$ to get a long exact sequence
$$\xymatrixcolsep{12pt}\xymatrix{0 \ar[r] & \Hom_{B_l}(M^\lambda,e_lX) \ar[r] & \Hom_{B_l}(M^\lambda,e_lY) \ar[r] &  \Hom_{B_l}(M^\lambda,e_lAe_l\tensorover{B_l}M^\lambda) \ar@{->}`r[d]`[dlll]`[ddll][ddll] \\ &&&&& \\
& \Ext^1_{B_l}(M^\lambda,e_lX) \ar[r] &...&&} $$

It follows from assumption (\ref{assumption: restriction of cell modules}) and the exactness of $Res_l$ that $e_lX \in \mathcal{F}_l(S)$ for $X \in \mathcal{F}(\Theta)$. Since $M^\lambda$ is relative projective in $\mathcal{F}_l(S)$, we get $\Ext^1_{B_l}(M^\lambda,e_lX)=0$, in particular we get a short exact sequence 
$$0 \to \Hom_{B_l}(M^\lambda,e_lX) \to \Hom_{B_l}(M^\lambda,e_lY) \to \Hom_{B_l}(M^\lambda,e_lAe_l\tensorover{B_l}M^\lambda) \to 0$$ which is isomorphic to the short exact sequence
$$(\diamond): \, 0 \to \Hom_{A}(Ind_lM^\lambda,X) \to \Hom_{A}(Ind_lM^\lambda,Y)  \overset{f}{\rightarrow} \End_{A}(Ind_lM^\lambda) \to 0 $$ since $Res_l$ is right adjoint to $Ind_l$.   

Consider $$ (\ast): \,\xymatrix{ 0 \ar[r] & X \ar[r] & Y \ar[r]^\alpha & Ind_lM^\lambda \ar[r] & 0 \\
&&& Ind_lM^\lambda \ar@{=}[u] \ar@{-->}[ul]^{\exists\beta}}$$
then $\beta$ exists (such that the diagram commutes) by surjectivity of the map $f$  in $(\diamond)$. This shows that $(\ast)$ splits and so $\Ext^1_A(M(l,\lambda),X)=0$. In particular, $M(l,\lambda)$ is relative projective in $\mathcal{F}(\Theta)$.\\

Now let $Z$ be a direct summand of $M(l,\lambda)$ with $\pi: Ind_lM^\lambda \to Z$ the projection onto $Z$ and $\iota: Z \to Ind_lM^\lambda$ the inclusion of $Z$ into $M(l,\lambda)$. 
With the same strategy as above, applied to the short exact sequence $$(\star): \, 0 \to X \to Y \to Z \to 0, $$ we see that the map $\Hom_A(Ind_lM^\lambda,Y)\rightarrow \Hom_A(Ind_lM^\lambda,Z)$ is surjective, which provides the existence of a map $f: Ind_lM^\lambda \to Y$ such that $\pi = gf$: 
$$\xymatrix{0 \ar[r] & X \ar[r] & Y \ar[r]^g & Z \ar[r]\ar@<1ex>@{^{(}->}[d]^\iota & 0 \\
 &&& Ind_lM^\lambda  \ar@{-->}[ul]^{\exists f} \ar@<1ex>@{>>}[u]^\pi }$$ But $\pi\iota=id_Z$, so $gf\iota=id_Z$ and $f\iota$ is right inverse to $g$. Therefore, the sequence $(\star)$ splits and $\Ext^1_A(Z,X)=0$, so all direct summands of $Ind_lM^\lambda$ are relative projective in $\mathcal{F}(\Theta)$. \\

In order to prove that $Y(l,\lambda)$ is the relative projective cover of $\Theta(l,\lambda)$, we have to show that there is an epimorphism $$\Psi:Y(l,\lambda) \twoheadrightarrow \Theta(l,\lambda)$$ with $\ker(\Psi) \in \mathcal{F}(\Theta)$ and  that $Y(l,\lambda)$ is minimal with respect to this property. Once we have established the epimorphism, the minimality condition is immediately satisfied since $Y(l,\lambda)$ is indecomposable, and then $Y(l,\lambda)$ is a relative projective cover of $\Theta(l,\lambda)$.  
 
The $B_l$-module $Y^\lambda$ has a dual Specht filtration with top quotient $S_\lambda$, so the kernel of the map $Y^\lambda \twoheadrightarrow S_\lambda$ lies in $\mathcal{F}_l(S)$. The functor $ind_l$ is exact and sends dual Specht modules to cell modules, so the kernel of the epimorphism $$\psi:ind_lY^\lambda \twoheadrightarrow ind_lS_\lambda = \Theta(l,\lambda)$$ has a cell filtration. 

Recall from the proof of Theorem \ref{def: Y} that there is an epimorphism $$\phi: Y(l,\lambda) \overset{\iota}{\hookrightarrow} Ind_lY^\lambda \overset{\varphi}{\to} ind_lY^\lambda.$$ Consider the commutative diagram 
$$\xymatrix{&  					& 0 \ar[d] \\
	   		&  					& \ker\phi \ar[d] 							&  \\
0 \ar[r]	& \ker\Psi \ar[r]& Y(l,\lambda) \ar[r]^{\Psi = \phi\psi}\ar[d]^\phi	& \Theta(l,\lambda) \ar[r] \ar@{=}[d]	& 0\\
0\ar[r]		& \ker\psi\ar[r]		& ind_l Y^\lambda \ar[d]\ar[r]^\psi		& \Theta(l,\lambda) \ar[r] & 0\\
			& 					& 0											&  } $$
with $\ker\psi$, $Y(l,\lambda)$, $ind_lY^\lambda$ and $\Theta(l,\lambda)$ in $\mathcal{F}(\Theta)$.  
The composition $$\ker \Psi \to Y(l,\lambda) \overset{\phi}{\to} ind_lY^\lambda \overset{\psi}{\to} \Theta(l,\lambda)$$ is zero, so the universal property of the kernel of $\psi$ provides a unique morphism $\ker \Psi \to \ker\psi$, with kernel $K$, making the diagram	 
$$\xymatrix{& 0	\ar[d]					& 0 \ar[d] \\
	   		& K	\ar[r]^\sim	\ar[d]			& \ker\phi \ar[d] 							 \\
0 \ar[r]	& \ker\Psi \ar[r]\ar[d]	& Y(l,\lambda) \ar[r]^{\Psi}\ar[d]^\phi	& \Theta(l,\lambda)\ar[r]\ar@{=}[d]& 0\\
0\ar[r]		& \ker\psi\ar[r]\ar[d]				& ind_l Y^\lambda \ar[d]\ar[r]^\psi			& \Theta(l,\lambda) \ar[r] 		& 0\\
			& 0					& 0											 } $$
commutative. The map $K \to \ker\phi$ is given by the universal property of the kernel of $\phi$ and is an isomorphism by the snake lemma. The snake lemma also asserts surjectivity of the map $\ker\Psi \to \ker\psi$. 

Thus, we have a short exact sequence $$0 \to \ker\phi \to \ker\Psi \to \ker\psi \to 0$$ with $\ker\psi \in \mathcal{F}(\Theta)$. If we can show that $\ker\phi = \ker \varphi\iota \in \mathcal{F}(\Theta)$, then $\ker\Psi \in \mathcal{F}(\Theta)$ since $\mathcal{F}(\Theta)$ is extension-closed. 

Consider the commutative diagram 
$$\xymatrix{0 \ar[r] & \ker\varphi\iota \ar[r] & Y(l,\lambda) \ar[r]^{\varphi\iota} \ar@{^{(}->}[d]^\iota & ind_lY^\lambda \ar[r] \ar@{=}[d] & 0 \\
0 \ar[r]  & \ker\varphi \ar[r] & Ind_lY^\lambda \ar[r]^\varphi & ind_lY^\lambda \ar[r] & 0}$$
We have $\iota(\ker\varphi\iota) \subseteq \ker\varphi$, so $\iota$ restricts to $\ker\varphi\iota \to \ker\varphi$.

Now, we consider the commutative diagram 
$$\xymatrix{0 \ar[r] & \ker\varphi\iota \ar[r] & Y(l,\lambda) \ar[r]^{\varphi\iota}  & ind_lY^\lambda \ar[r] \ar@{=}[d] & 0 \\
0 \ar[r] & \ker\varphi \ar[r] & Ind_lY^\lambda \ar[r]^\varphi \ar@{->>}[u]^\pi & ind_lY^\lambda \ar[r] & 0}$$
where $\pi$ is the projection from $Ind_lY^\lambda$ onto its summand $Y(l,\lambda)$. We see that $\pi(\ker\varphi) \subseteq \ker\varphi\iota$, so $\pi$ restricts to $\ker\varphi \to \ker\varphi\iota$.

$$\xymatrix{0 \ar[r] & \ker\varphi\iota \ar[r]\ar@{^{(}->}[d]^\iota & Y(l,\lambda) \ar[r]^{\varphi\iota} \ar@{^{(}->}[d]^\iota  & ind_lY^\lambda \ar[r] \ar@{=}[d] & 0 \\
0 \ar[r] & \ker\varphi \ar[r]\ar@<1ex>@{->>}[u]^\pi & Ind_lY^\lambda \ar[r]^\varphi \ar@<1ex>@{->>}[u]^\pi & ind_lY^\lambda \ar[r] & 0}$$

In particular, $\ker\varphi\iota$ is a direct summand of $\ker\varphi = J_{l-1}e_l \tensorover{B_l} Y^\lambda$. By the proof of Theorem \ref{thm: cell filtration}, the module $J_{l-1}e_l \tensorover{B_l} M^\lambda$ has a cell filtration. By the assumption on the characteristic of the field, cell filtrations restrict to direct summands, so $\ker\varphi$ and $\ker\varphi\iota$ lie in $\mathcal{F}(\Theta)$. Since $\mathcal{F}(\Theta)$ is extension-closed, we get $\ker\Psi \in \mathcal{F}(\Theta)$ and so $Y(l,\lambda)$ is a relative projective cover of $\Theta(l,\lambda)$.   
\end{proof}

\begin{cor}[\hspace*{-3pt}\cite{HHKP}, Corollary 12.4] 
If $B_l$ is a group algebra of a symmetric group $\Sigma_{l'}$ for some $l' \in \mathbb{N}$, $\text{\textnormal{char}}k = p \in \mathbb{Z}_{\geq 0}\setminus\{2,3\}$, and $A$ satisfies \textnormal{(\ref{assumption: J_ne_l})} to \textnormal{(\ref{assumption: restriction of cell modules})}, then $Y(l,\lambda)$ is projective if and only if $\lambda$ is $p$-restricted.
\end{cor}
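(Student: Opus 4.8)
The plan is to reduce the statement about $Y(l,\lambda)$ being projective as an $A$-module to the classical fact, due to James, that the symmetric-group Young module $Y^\lambda$ is projective exactly when $\lambda$ is $p$-restricted. First I would recall the structural results already established: $Y(l,\lambda)$ is the relative projective cover of the cell module $\Theta(l,\lambda)=ind_lS_\lambda$ by Theorem \ref{thm: relative projective}, and it is constructed as the distinguished direct summand of $Ind_lY^\lambda$ carrying $ind_lY^\lambda$ as a quotient. The key to transferring projectivity between $A$ and $B_l$ should be the functor isomorphisms in Proposition \ref{prop: HHKP properties ind}, especially part (\ref{ind property Ext B}), which gives $\Ext^j_{B_l}(X,Y)\simeq\Ext^j_A(ind_lX,ind_lY)$ for all $j\geq 0$, together with the exactness and left-inverse properties of the restriction functors $res_l$ and $Res_l$.

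The main steps I envisage are as follows. For one direction, suppose $\lambda$ is $p$-restricted, so that $Y^\lambda$ is a projective $k\Sigma_{l'}$-module. A projective $B_l$-module is a summand of a free module, and under $ind_l$ (which is exact and sends cell modules to cell modules) I would track how projectivity propagates; the subtlety is that $ind_lY^\lambda$ need not itself be a projective $A$-module, so the argument must instead show that the full summand $Y(l,\lambda)$ of $Ind_lM^\lambda$ is $A$-projective. Here I expect to use that $J_{l-1}$ acts trivially on cell modules from the top layer together with the layer decomposition of $Ind_lM^\lambda$ from the proof of Theorem \ref{thm: cell filtration}, reducing the question to whether all the lower layers $M^n(l,\lambda)\simeq ind_n(res_nInd_lM^\lambda)$ vanish or contribute only projective summands when $Y^\lambda$ is projective. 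The cleanest route is probably to invoke the corresponding statement in \cite{HHKP} (this is cited as their Corollary 12.4) at the level of the abstract Young modules, and then use the coincidence of the two notions of Young module to conclude; indeed the excerpt promises (Corollary \ref{cor: young modules coincide}) that the construction here matches the abstract one in \cite{HHKP}, so their projectivity criterion applies verbatim.

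For the converse, suppose $Y(l,\lambda)$ is projective over $A$. I would apply the exact functor $Res_l$, recalling that $Res_l$ is left inverse to $ind_l$ and that $e_lA$ is projective as a right $A$-module, so that $Res_l$ sends projective $A$-modules to projective $B_l$-modules. Since $Y(l,\lambda)$ is a summand of $Ind_lY^\lambda$ and $res_lInd_lY^\lambda\simeq Y^\lambda$, taking the appropriate restriction should recover $Y^\lambda$ (or a module having $Y^\lambda$ as a summand) as a projective $B_l=k\Sigma_{l'}$-module, forcing $\lambda$ to be $p$-restricted by James' criterion. The restriction hypothesis $\text{char}\,k\neq 2,3$ is exactly what guarantees, via Lemma \ref{lemma: standard system} and Theorem \ref{thm: cell filtration}, that summands of $Ind_lM^\lambda$ are cell filtered and that the summand-tracking arguments are legitimate.

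The hard part will be the forward direction: showing that $p$-restrictedness of $\lambda$ is enough to make the $A$-module $Y(l,\lambda)$ genuinely projective and not merely relative projective in $\mathcal{F}(\Theta)$. Relative projectivity in the cell-filtered category is automatic from Theorem \ref{thm: relative projective}, but ordinary $A$-projectivity is a strictly stronger condition, and bridging this gap is where the real content lies — one must control the lower layers $J_{n}e_l\tensorover{B_l}Y^\lambda$ and verify they assemble into a projective module, which is precisely the place where the $p$-restricted hypothesis on $Y^\lambda$ must be used rather than just the cell-filtration assumptions (\ref{assumption: J_ne_l})--(\ref{assumption: restriction of cell modules}). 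I would handle this by leaning on the identification with the \cite{HHKP} Young modules and their Corollary 12.4, so that the whole statement becomes a translation of their result into the explicit-construction language developed in this section.
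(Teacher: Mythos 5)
Your proposal is correct and takes essentially the same route as the paper: the paper offers no independent argument but deduces the statement from \cite{HHKP}, Corollary 12.4, exactly as your ``cleanest route'' suggests, since Theorem \ref{thm: relative projective} identifies $Y(l,\lambda)$ as the relative projective cover of $\Theta(l,\lambda)$ and hence (Corollary \ref{cor: young modules coincide}) with the abstract Young module $Y_{pr}(l,\lambda)$ of \cite{HHKP}, to which their projectivity criterion applies verbatim. Your exploratory detours are unnecessary once this identification is in hand (and one of them is imprecise: it is $res_l$, not $Res_l$, that manifestly sends projectives to projectives, since $res_lA \simeq e_l(A/J_{l-1})$ is $B_l$-free by \cite[Proposition 3.5]{HHKP}), but your declared main argument coincides with the paper's.
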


\subsection{Schur-Weyl duality}\label{subsec: SW duality}
In \cite{HHKP}, the Young modules $Y_{pr}(l,\lambda)$ of a cellularly stratified algebra $A$ are defined as the relative projective covers of the cell modules $\Theta(l,\lambda)$, in the case where the cell modules of the input algebras $B_l$ form standard systems. Since we assumed $B_l$ to be isomorphic to $k\Sigma_{l'}$ or $\mathcal{H}_{k,q}(\Sigma_{l'})$ for some $l' \in \mathbb{N}$ and $\text{char} k \in \mathbb{Z}_{\geq 0}\setminus\{2,3\}$, respectively $h \geq 4$, we are in this situation (Lemma \ref{lemma: standard system}). Therefore, we have the following corollary of Theorem \ref{thm: relative projective}.

\begin{cor}\label{cor: young modules coincide}
The Young modules $Y_{pr}(l,\lambda)$, defined abstractly in \textnormal{\cite{HHKP}}, coincide with the explicitly defined Young modules $Y(l,\lambda)$ of this article.
\end{cor}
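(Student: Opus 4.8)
The plan is to show that the two definitions of Young modules agree by invoking the uniqueness of relative projective covers. Recall that in \cite{HHKP} the module $Y_{pr}(l,\lambda)$ is \emph{defined} to be the relative projective cover of the cell module $\Theta(l,\lambda)$ in $\mathcal{F}(\Theta)$, and that this definition makes sense precisely because, under our hypotheses on the characteristic (or on $h$), the cell modules $\Theta$ form a standard system, so $\mathcal{F}(\Theta)$ is closed under direct summands and relative projective covers exist and are unique up to isomorphism.

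First I would record that the standing assumptions of this subsection put us exactly in the situation of \cite{HHKP}: since $B_l \simeq k\Sigma_{l'}$ or $\mathcal{H}_{k,q}(\Sigma_{l'})$ and $\text{char}\,k \in \mathbb{Z}_{\geq 0}\setminus\{2,3\}$ (respectively $h \geq 4$), Lemma \ref{lemma: standard system} guarantees that $\Theta$ is a standard system, so both the abstract construction of \cite{HHKP} and the explicit construction of this article are available. Second, I would invoke Theorem \ref{thm: relative projective}, which is the real content here: it establishes that the explicitly constructed $Y(l,\lambda)$ is itself a relative projective cover of $\Theta(l,\lambda)$ in $\mathcal{F}(\Theta)$.

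The conclusion is then immediate from the uniqueness of relative projective covers. Both $Y(l,\lambda)$ and $Y_{pr}(l,\lambda)$ are relative projective covers of the same cell module $\Theta(l,\lambda)$ in the same category $\mathcal{F}(\Theta)$; since relative projective covers are unique up to isomorphism (this uniqueness being a standard consequence of the covering/minimality property, exactly as for ordinary projective covers in a category with the Krull--Schmidt property), we obtain $Y(l,\lambda) \simeq Y_{pr}(l,\lambda)$.

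The main obstacle is not in this corollary itself, whose proof is a one-line appeal to uniqueness, but in ensuring that the hypotheses needed to apply Theorem \ref{thm: relative projective} are genuinely in force. In particular one must be careful that the assumptions (\ref{assumption: J_ne_l}) through (\ref{assumption: restriction of cell modules}), together with the condition on $\text{char}\,k$ or $h$, are part of the standing context; these are precisely what makes $Y(l,\lambda)$ a relative projective cover and what makes $Y_{pr}(l,\lambda)$ well defined. Provided this is stated or assumed, the identification is formal.
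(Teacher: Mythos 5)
Your proposal is correct and matches the paper's own argument: the paper likewise notes that Lemma \ref{lemma: standard system} places us in the setting where the abstract definition of $Y_{pr}(l,\lambda)$ as a relative projective cover applies, and then deduces the corollary directly from Theorem \ref{thm: relative projective} together with uniqueness of relative projective covers. Your added care about the standing assumptions (\ref{assumption: J_ne_l})--(\ref{assumption: restriction of cell modules}) and the condition on $\text{char}\,k$ or $h$ is consistent with the paper's context and does not change the argument.
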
 

In particular, we are in the situation of Theorem 13.1 from \cite{HHKP}:
 
\begin{cor}
\label{thm: HHKP 13.1} 
Let $A$ be as defined in Subsection \ref{subsec: setup}, such that the assumptions\textnormal{(\ref{assumption: J_ne_l})} to \textnormal{(\ref{assumption: restriction of cell modules})} are satisfied and let $\text{\textnormal{char}}k \in \mathbb{Z}_{\geq 0}\setminus \{2,3\}$ (or $h\geq 4$). Then the following holds.
\begin{enumerate}
\item Each $M \in \mathcal{F}(\Theta)$ has well-defined filtration multiplicities.
\item The category $\mathcal{F}_A(\Theta)$ of cell filtered $A$-modules is equivalent, as exact category, to the category $\mathcal{F}_{\End_A(Y)}(\Delta)$ of standard filtered modules over the quasi-hereditary algebra $\End_A(Y)$, where $$Y=\bigoplus\limits_{(l,\lambda) \in \Lambda_r} Y(l,\lambda)^{n_{l,\lambda}}$$ and  $n_{l,\lambda}= \begin{cases} \dim L(l,\lambda) & \text{ if there is a simple module } L(l,\lambda) \\ 1 & \text{ otherwise.} \end{cases}$ 
\item There is a Schur-Weyl duality between $A$ and $\End_A(Y)$. In particular, we have $A = \End_{\End_A(Y)}(Y)$. 
\end{enumerate}
\end{cor}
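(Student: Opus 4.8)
The plan is to reduce everything to the analogous statements for the input Hecke/group algebras $B_l$ and to the transport results already established in Proposition \ref{prop: HHKP properties ind}. The statement to be proven is Corollary \ref{thm: HHKP 13.1}, which reassembles the pieces from \cite[Theorem 13.1]{HHKP}. The crucial input is Corollary \ref{cor: young modules coincide}: once we know that our explicitly constructed modules $Y(l,\lambda)$ coincide with the abstractly defined relative projective covers $Y_{pr}(l,\lambda)$ of \cite{HHKP}, the entire machinery of \cite[Theorem 13.1]{HHKP} becomes applicable verbatim. So the first step is simply to verify that the hypotheses of \cite[Theorem 13.1]{HHKP} are met under our assumptions (\ref{assumption: J_ne_l})--(\ref{assumption: restriction of cell modules}) together with the condition $\text{char}\,k \in \mathbb{Z}_{\geq 0}\setminus\{2,3\}$ (or $h\geq 4$).

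For part (1), I would argue that $\mathcal{F}(\Theta)$ has well-defined filtration multiplicities because $\Theta$ forms a standard system: by Lemma \ref{lemma: standard system} this holds precisely under the characteristic hypothesis. The standard-system axioms guarantee that $\dim \Hom_A(M,\nabla(l,\lambda))$ (or the appropriate Ext-vanishing count) computes the multiplicity of $\Theta(l,\lambda)$ in any cell filtration of $M$, independently of the chosen filtration; this is the standard Ringel-theoretic argument, and I would cite \cite[Theorem 2]{Ringel} or the corresponding statement in \cite{HHKP}. For part (2), the equivalence of exact categories $\mathcal{F}_A(\Theta)\simeq \mathcal{F}_{\End_A(Y)}(\Delta)$ is the content of the general theory of standardly stratified/quasi-hereditary algebras associated to a standard system: $Y$ is the characteristic tilting-type module whose endomorphism ring is the Schur algebra, and the Ringel-dual correspondence yields the equivalence. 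The explicit multiplicities $n_{l,\lambda}$ are chosen exactly so that $\End_A(Y)$ is quasi-hereditary with the right number of standard modules; here I would invoke Corollary \ref{cor: young modules coincide} to identify the summands $Y(l,\lambda)$ with the relative projective covers used in \cite{HHKP}, so that $Y$ matches the module denoted $Y$ in \cite[Theorem 13.1]{HHKP}.

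For part (3), the Schur-Weyl double centralizer property $A=\End_{\End_A(Y)}(Y)$ follows from the faithfulness of $Y$ as a left $A$-module together with the equivalence in (2); this is again a direct application of \cite[Theorem 13.1]{HHKP}, since faithfulness is guaranteed by the fact that $Y$ contains, as a summand, a relative projective cover in each layer and these collectively generate $A$ as a bimodule. I would phrase the proof as: \emph{By Corollary \ref{cor: young modules coincide}, the modules $Y(l,\lambda)$ agree with the Young modules $Y_{pr}(l,\lambda)$ of \cite{HHKP}. Under the hypotheses (\ref{assumption: J_ne_l})--(\ref{assumption: restriction of cell modules}) and the characteristic assumption, Lemma \ref{lemma: standard system} ensures that the cell modules $\Theta$ form a standard system, so the algebra $A$ is exactly in the situation of \cite[Theorem 13.1]{HHKP}, from which all three assertions follow.}

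I expect the main obstacle to be conceptual rather than computational: the real work has already been done in establishing Corollary \ref{cor: young modules coincide}, and the delicate point is to confirm that the multiplicity convention for $n_{l,\lambda}$ (involving $\dim L(l,\lambda)$ when a simple module exists and $1$ otherwise) matches the convention in \cite{HHKP} so that the endomorphism ring $\End_A(Y)$ really is the intended quasi-hereditary Schur algebra and not a Morita-equivalent variant with a different basic algebra. Once the identification of $Y$ with the module of \cite{HHKP} is secured, parts (1)--(3) are a transcription of \cite[Theorem 13.1]{HHKP}, so the corollary is essentially a packaging statement asserting that our explicit construction lands us in their abstract framework.
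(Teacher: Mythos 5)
Your proposal follows essentially the same route as the paper, whose entire argument for this corollary is the observation that Corollary \ref{cor: young modules coincide} (identifying $Y(l,\lambda)$ with the relative projective covers $Y_{pr}(l,\lambda)$) together with Lemma \ref{lemma: standard system} (the cell modules $\Theta$ form a standard system under the characteristic assumption) places $A$ exactly in the situation of \cite[Theorem 13.1]{HHKP}, from which all three assertions are imported directly. Your side remarks are loosely worded in one spot --- in the Dlab--Ringel framework $Y$ is the direct sum of relative projective covers, corresponding to the projective modules over $\End_A(Y)$, rather than a ``tilting-type'' module obtained via Ringel duality --- but this does not affect the substance, since the actual work is delegated to \cite[Theorem 13.1]{HHKP} exactly as in the paper.
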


\begin{rk}
The multiplicities $n_{l,\lambda}$ of the Young modules $Y(l,\lambda)$ in $Y$ are chosen to be minimal such that all Young modules appear at least once and such that the projective Young modules appear as often as they appear in $A$, i.e. such that there is a $D \in A-\mod$ with $Y=A\oplus D$. 
\end{rk}

\subsection{Decomposition of permutation modules}\label{subsec: decomposition}
Using the results of the previous subsections, we are finally able to prove that permutation modules for $A$ decompose into a direct sum of Young modules, just like permutation modules for $B_l$ decompose into direct sums of Young modules. 

\begin{thm}\label{main thm} Let $A$ be as defined in Subsection \ref{subsec: setup}, such that the assumptions\textnormal{(\ref{assumption: J_ne_l})} to \textnormal{(\ref{assumption: restriction of cell modules})} are satisfied and let $\text{\textnormal{char}}k \in \mathbb{Z}_{\geq 0}\setminus \{2,3\}$ (or $h\geq 4$). Let $(l,\lambda) \in \Lambda_r$. Then there is a decomposition $$Ind_lM^\lambda = \bigoplus\limits_{(m,\mu)\succeq (l,\lambda)} Y(m,\mu)^{a_{m,\mu}}$$ with non-negative integers $a_{m,\mu}$. Moreover, $a_{l,\lambda}=1$.  
\end{thm}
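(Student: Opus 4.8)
The plan is to realise $Ind_lM^\lambda$ as a relative projective object of $\mathcal{F}(\Theta)$ and then to invoke the structure theory of the category of filtered modules over a standard system in order to decompose it into indecomposable relative projectives, which will turn out to be exactly the Young modules. First I would record the two inputs supplied by the previous subsections: by Theorem \ref{thm: cell filtration} the module $M(l,\lambda)=Ind_lM^\lambda$ lies in $\mathcal{F}(\Theta)$, and by Theorem \ref{thm: relative projective} it is relative projective in $\mathcal{F}(\Theta)$, as is each of its direct summands. Since $A$ is finite-dimensional over the algebraically closed field $k$, the Krull--Schmidt theorem lets me write $Ind_lM^\lambda=\bigoplus_i Z_i$ with each $Z_i$ indecomposable; as the characteristic of $k$ is different from $2$ and $3$ (or $h\geq 4$), the cell modules $\Theta$ form a standard system by Lemma \ref{lemma: standard system}, so $\mathcal{F}(\Theta)$ is closed under direct summands (\cite[Theorem 2]{Ringel}) and each $Z_i\in\mathcal{F}(\Theta)$ is an indecomposable relative projective.

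The key step is to show that every indecomposable relative projective object of $\mathcal{F}(\Theta)$ is a Young module $Y(m,\mu)$. Here I would use the theory of standard systems (\cite{Ringel}, see also \cite{DR} and the use made of it in \cite{HHKP}): for a standard system $\Theta$, the category $\mathcal{F}(\Theta)$ has enough relative projectives, and the indecomposable relative projective objects are, up to isomorphism, exactly the relative projective covers of the cell modules $\Theta(m,\mu)$, one for each $(m,\mu)\in\Lambda_r$; consequently every relative projective object of $\mathcal{F}(\Theta)$ is a direct sum of such covers. By Theorem \ref{thm: relative projective} the relative projective cover of $\Theta(m,\mu)$ is precisely $Y(m,\mu)$, and since relative projective covers are unique up to isomorphism, the indecomposable relative projectives of $\mathcal{F}(\Theta)$ are exactly the $Y(m,\mu)$. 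Applied to the summands $Z_i$, this yields $Ind_lM^\lambda=\bigoplus_{(m,\mu)}Y(m,\mu)^{a_{m,\mu}}$ with non-negative integers $a_{m,\mu}$.

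It then remains to constrain the indices and to pin down the multiplicity of $Y(l,\lambda)$. Lemma \ref{lemma: HP17} shows $a_{m,\mu}=0$ whenever $m>l$, and Lemma \ref{lemma: HP18} shows that for $m=l$ a non-zero $a_{l,\mu}$ forces $\mu\geq\lambda$; for $m<l$ the relation $(m,\mu)\succeq(l,\lambda)$ holds automatically from the definition of $\prec$. Together these say precisely that $a_{m,\mu}\neq 0$ implies $(m,\mu)\succeq(l,\lambda)$. For the final assertion $a_{l,\lambda}=1$, I would write $M^\lambda=\bigoplus_\mu(Y^\mu)^{b_\mu}$ with $b_\lambda=1$ (\cite[Theorem 3.1]{Jtriv}), so that $Ind_lM^\lambda=\bigoplus_\mu(Ind_lY^\mu)^{b_\mu}$. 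By Claim 3 in the proof of Theorem \ref{def: Y}, the module $Y(l,\lambda)$, having quotient isomorphic to $ind_lY^\lambda$, can occur only inside the summand $Ind_lY^\lambda$ (any isomorphic copy inside $Ind_lY^\mu$ with $\mu\neq\lambda$ would give such a summand, which is excluded); by Claim 2 it occurs inside $Ind_lY^\lambda$ exactly once (a second copy would be a second summand with quotient $ind_lY^\lambda$, contradicting uniqueness). Hence $a_{l,\lambda}=b_\lambda\cdot 1=1$.

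The main obstacle is the second step: passing from "indecomposable and relative projective in $\mathcal{F}(\Theta)$" to "is a Young module". Everything hinges on the standard-system machinery guaranteeing that relative projectives in $\mathcal{F}(\Theta)$ are direct sums of the relative projective covers of the cell modules; once this is in hand, the index constraints and the multiplicity computation are routine applications of Lemmas \ref{lemma: HP17} and \ref{lemma: HP18} and of the claims established in Theorem \ref{def: Y}. As an alternative to the standard-system argument one could pass through the Schur--Weyl duality of Corollary \ref{thm: HHKP 13.1}: under the exact equivalence $\mathcal{F}_A(\Theta)\simeq\mathcal{F}_{\End_A(Y)}(\Delta)$, relative projectives correspond to projective modules over the quasi-hereditary algebra $\End_A(Y)$, whose indecomposable projectives correspond to the $Y(m,\mu)$, giving the same decomposition.
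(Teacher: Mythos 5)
Your proof is correct and follows essentially the same route as the paper: both rest on the Dlab--Ringel standard-system correspondence between the cell modules and the indecomposable relative projective objects of $\mathcal{F}(\Theta)$, on Theorem \ref{thm: relative projective} identifying the latter with the Young modules, and on Lemmas \ref{lemma: HP17} and \ref{lemma: HP18} for the index constraints. The only minor differences are that you apply Krull--Schmidt directly on the $A$-side rather than decomposing the image projective module over the quasi-hereditary algebra $\End_A(Y)$ (the ``alternative'' you mention at the end is in fact the paper's own formulation), and that your verification of $a_{l,\lambda}=1$ via Claims 2 and 3 of Theorem \ref{def: Y} spells out what the paper dispatches with ``by definition of $Y(l,\lambda)$''.
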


\begin{proof}
By Lemma \ref{lemma: standard system}, the set $\Theta$  forms a standard system. 
Corollary \ref{thm: HHKP 13.1} says that there is a quasi-hereditary algebra $C = \End_A(Y)$ such that the categories $\mathcal{F}_A(\Theta)$ of cell filtered $A$-modules and $\mathcal{F}_C(\Delta)$ of standard filtered $C$-modules are equivalent, which was first established in \cite{DR}. To prove this equivalence, Dlab and Ringel show that there is a one-to-one correspondence between the modules in the standard system $\{\Theta\}$ and the indecomposable relative projective modules in $\mathcal{F}(\Theta)$. By Theorem \ref{thm: relative projective}, the Young modules $Y(l,\lambda)$ are indecomposable relative projective. The one-to-one correspondence shows that these are all indecomposable relative projective $A$-modules, since for each $(l,\lambda) \in \Lambda_r$ there is exactly one Young module and exactly one cell module, and these are all cell modules, cf. Propostition \ref{prop: HHKP properties ind} part \emph{(\ref{ind property cells induced})}, Theorem \ref{def: Y} and Corollary \ref{cor: Young modules not isomorphic}. The algebra $C$ is quasi-hereditary, so the relative projective $C$-modules are exactly the projective $C$-modules, cf. \cite[Corollary 2]{Ringel}, and they correspond under the equivalence to the relative projective $A$-modules. Hence, the projective $C$-modules are indexed by $\Lambda_r$. 

The permutation module $M(l,\lambda)$ is relative projective in $\mathcal{F}(\Theta)$, so its image under the equivalence $\mathcal{F}(\Theta) \xrightarrow{\,\smash{\raisebox{-0.65ex}{\ensuremath{\sim}}}\,} \mathcal{F}(\Delta)$ is a projective $C$-module $P$. Let $P=\bigoplus\limits_{(n,\nu)\in  \Lambda_r} P(n,\nu)^{a_{n,\nu}}$ be a decomposition of $P$ into indecomposable modules. Sending $P(n,\nu)$ back to $\mathcal{F}(\Theta)$ through the equivalence, its image must be an indecomposable relative projective module $Y(m,\mu)$. Thus, $M(l,\lambda) =\bigoplus\limits_{(m,\mu)\in\Lambda_r} Y(m,\mu)^{a_{m,\mu}}$ for some non-negative integers $a_{m,\mu}$. $a_{l,\lambda}=1$ by definition of $Y(l,\lambda)$. Lemmas \ref{lemma: HP17} and \ref{lemma: HP18} show that we only have to sum over those Young modules $Y(m,\mu)$ with $(m,\mu) \succeq (l,\lambda)$.  
\end{proof}

\section{Applications}\label{sec: applications}
There are three main examples of cellularly stratified algebras in \cite{HHKP}: Brauer algebras, partition algebras and Birman-Murakami-Wenzl algebras (BMW algebras), a deformation of Brauer algebras. The results for Brauer algebras first appeared in \cite{HP}. With the theory from this article, we can recover their results, using less combinatorics specific to Brauer algebras but the more structural properties of cellularly stratified algebras, which have been introduced after the work of Hartmann and Paget on Brauer algebras appeared. We recover the results for Brauer algebras in Subsection \ref{sec: applications Brauer algebras}, thus providing new proofs. In Subsection \ref{sec: applications partition algebras}, we show that the results hold for partition algebras under certain additional assumptions. The theory fails for BMW algebras, since we need the cellular algebras $B_l=\mathcal{H}_{k,q}(\Sigma_{l'})$ to be subalgebras. However, the $q$-Brauer algebras, defined by Wenzl in \cite{Wenzl}, are another deformation of Brauer algebras which fit into this setting. They are cellularly stratified as shown by Nguyen in his PhD thesis \cite{DungPhD} and contain Hecke algebras as subalgebras. We do not prove that the $q$-Brauer algebras satisfy the assumptions in this article.

\subsection{Recovering Results for Brauer Algebras}\label{sec: applications Brauer algebras}
Let $A = B_k(r,\delta) \subseteq P_k(r,\delta)$ be the Brauer algebra on $r$ dots with $\delta \in k$. 
If $r$ is even, let $\delta \neq 0$. Then by \cite[Proposition 2.4]{HHKP}, $A$ is cellularly stratified with stratification data $$(k\Sigma_t,V_t, k\Sigma_{t+2},V_{t+2},...,k\Sigma_{r-2},V_{r-2},k\Sigma_r,V_r),$$ where $t=0$ if $r$ is even and $t=1$ if $r$ is odd, and $V_l$ is the vector space with basis consisting of partial diagrams with exactly $\frac{r-l}{2}$ horizontal arcs. The idempotents $e_l$ are defined as $e_l = \frac{1}{\delta^{\frac{r-l}{2}}}\cdot \begin{minipage}[c]{4cm} 
\xymini{\overset{1}{\bullet} \tra[d]  & ... & \overset{l}{\bullet} \tra[d] & \bullet \arcd[r] & \bullet & ... & \bullet \arcd[r] & \overset{r}{\bullet} \\ 
\bullet & ... & \bullet & \bullet \arcu[r] & \bullet & ... & \bullet \arcu[r] & \bullet } \end{minipage}$ for $\delta \neq 0$. For $\delta =0$ (and $r$ odd), we use
$e_l = \begin{minipage}[c]{4cm} 
\xymini{ \overset{1}{\bullet} \tra[d]  & ... & \bullet \tra[d] & \overset{l}{\bullet} \tra[drrrrr] & \bullet \ar@{-}@/_3pt/[r] & \bullet & ... & \bullet \ar@{-}@/_3pt/[r] & \overset{r}{\bullet} \\ 
\bullet & ... & \bullet &  \bullet \ar@{-}@/^3pt/[r] & \bullet & ... & \bullet \ar@{-}@/^3pt/[r] & \bullet & \bullet} \end{minipage}$. 

We want to recover the results from \cite{HP}, so we have to show that the Young modules defined here coincide with those defined in \cite{HP} as indecomposable submodules of $Ind_lY^\lambda$ with quotient $V_l \tensorover{k} Y^\lambda$. The module structure on $V_l \tensorover{k} X$ is defined as follows. Let $b \in B_k(r,\delta)$ be a basis element and let $v \otimes x \in V_l \tensorover{k} X$. Then $$b(v \otimes x) = (bv) \otimes \pi(b,v)x $$
where $bv$ is the partial diagram obtained by writing $b$ on top of $v$, identifying $\bottom(b)$ with $v$ and following the new connections in $\top(b)$, multiplying by $\delta$ for each closed loop. If the result is not in $V_l$, set $bv = 0$. The permutation $\pi(b,v)$ is given by the permutation of the free dots of $v$ in $bv$. 

\begin{ex} Let $b = \begin{minipage}{4cm}\xysmall{\bullet \tra[dr] & \bullet \arcd[rr] & \bullet \tra[dll] & \bullet \\ \bullet  & \bullet & \bullet \arcu[r] & \bullet }\end{minipage} \in B_k(4,\delta)$ and $v = \xysmall{\bullet & \bullet & \bullet \arcu[r] & \bullet} \in V_2$. Then $bv = \delta^{\# \text{closed loops}}\top\left(\begin{minipage}{4cm}\xysmall{\bullet \tra[dr] & \bullet \arcd[rr] & \bullet \tra[dll] & \bullet \\ \bullet  & \bullet & \bullet \arcu[r] & \bullet 
\\ \bullet \ar@{=}[u] & \bullet \ar@{=}[u] & \bullet \arcu[r]\ar@{=}[u] & \bullet \ar@{=}[u]} \end{minipage}\right) = \delta  \xysmall{\bullet & \bullet \arcu[rr] & \bullet & \bullet} $ and $\pi(b,v) = (1,2)$.
\end{ex}

\begin{prop}\label{prop: Brauer algebra}
For any $X \in k\Sigma_l-\mod$, there is an isomorphism $ind_lX \simeq  V_l \tensorover{k} X$ of $B_k(r,\delta)$-modules.  
\end{prop}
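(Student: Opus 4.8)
The plan is to identify both sides with the same underlying vector space and then match the $B_k(r,\delta)$-action diagrammatically. I would start from the isomorphism $ind_lX \simeq (A/J_{l-1})e_l \tensorover{B_l} X$ recalled in Subsection \ref{subsec: functors}, so that it suffices to produce an isomorphism of $(A,B_l)$-bimodules
$$(A/J_{l-1})e_l \;\simeq\; V_l \otimes_k B_l,$$
where $B_l = k\Sigma_l$, the right $B_l$-action on $V_l \otimes_k B_l$ is multiplication on the second factor, and the left $A$-action is $b\cdot(v \otimes \sigma) = (bv) \otimes \pi(b,v)\sigma$ (with $bv = 0$ when the product leaves $V_l$). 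Tensoring such an isomorphism with $X$ over $B_l$ and using $(V_l \otimes_k B_l)\tensorover{B_l} X \simeq V_l \otimes_k X$ then transports exactly the module structure $b(v\otimes x) = (bv) \otimes \pi(b,v)x$ defined just before the proposition.

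To build the bimodule isomorphism I would argue on bases. A basis of $(A/J_{l-1})e_l$ is given by the classes $\overline{de_l}$ of Brauer diagrams $d$ for which $de_l$ has exactly $l$ propagating lines: diagrams with strictly fewer than $l$ propagating lines lie in $J_{l-1}$, while $de_l$ can never have more than $l$ propagating lines because $e_l$ has exactly $l$. For such a class the bottom row of $de_l$ is the standardised one inherited from $e_l$, so $\overline{de_l}$ is determined by its top row $\top(de_l)$, which is a partial diagram with exactly $l$ free dots and $(r-l)/2$ arcs, hence an element of $V_l$, together with the permutation $\Pi(de_l) \in \Sigma_l$ recording how the propagating strands join the free dots of $\top(de_l)$ to the standardised bottom. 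I would set $\Theta(\overline{de_l}) = \top(de_l) \otimes \Pi(de_l)$ on loop-free representatives, extended linearly, with inverse sending $v \otimes \sigma$ to the class of the diagram whose top row is $v$, whose bottom is the standardised row of $e_l$, and whose propagating strands realise $\sigma$. A basis/dimension count ($|V_l|\cdot l!$ on each side) shows $\Theta$ is a linear bijection, and right $B_l$-linearity is immediate, since right multiplication by $\sigma' \in \Sigma_l \hookrightarrow e_lAe_l$ permutes the bottom strands and replaces $\sigma$ by $\sigma\sigma'$.

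The main work, and the step I expect to be the genuine obstacle, is left $A$-linearity: for a diagram $b$ one must check $\Theta(\overline{b\cdot de_l}) = b\cdot\Theta(\overline{de_l}) = (bv) \otimes \pi(b,v)\sigma$, where $v = \top(de_l)$ and $\sigma = \Pi(de_l)$. This amounts to analysing the concatenation of $b$ on top of $de_l$ and verifying three things at once: that modulo $J_{l-1}$ the new top row is precisely the partial diagram $bv$ obtained by placing $b$ over $v$ (so in particular $bv = 0$ in $V_l$, i.e. the product drops below $l$ propagating lines, exactly when $b\cdot de_l \in J_{l-1}$); that the induced permutation of the surviving free dots sends $\sigma$ to $\pi(b,v)\sigma$, which is where the definition of $\pi(b,v)$ as the permutation of the free dots of $v$ in $bv$ enters; and that the powers of $\delta$ coming from closed loops in the two computations of $b\cdot de_l$ and of $bv$ coincide. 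Each point is a direct comparison of the diagrammatic multiplication rule in $B_k(r,\delta)$ with the rule defining the action on $V_l \otimes_k X$, but the careful strand bookkeeping and matching of the loop scalars is the delicate part. I would finally remark that the same strand analysis applies verbatim in the $\delta = 0$, $r$ odd case with the alternative idempotent $e_l$, where no inner loops occur.
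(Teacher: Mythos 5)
Your proposal is correct and takes essentially the same route as the paper: your basis-matching map is exactly the paper's $\varphi$ (which sends $v \otimes x$ to $(d^v + J_{l-2})\otimes x$, with $d^v$ the diagram with top row $v$, standardised bottom row and non-crossing propagating lines), and your left $A$-linearity check is the same strand-and-loop analysis, including the observation that $a d^v$ falls into the lower layer precisely when $av = 0$ in $V_l$ and that the surviving permutation is $\pi(a,v)$. The only cosmetic differences are that you package the argument as a $(A,B_l)$-bimodule isomorphism $(A/J_{l-1})e_l \simeq V_l \otimes_k B_l$ established by a direct basis enumeration before tensoring with $X$, whereas the paper defines the map after tensoring and gets bijectivity from surjectivity plus the dimension count supplied by the freeness statement \cite[Proposition 3.5]{HHKP} (and, in the Brauer setting, writes the lower layer as $J_{l-2}$ rather than your generic $J_{l-1}$).
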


\begin{proof} Let $X \in k\Sigma_l-\mod$ and consider the map 
$$\begin{aligned} \varphi: & V_l \otimes_k X & \longrightarrow &\, (A/J_{l-2})e_l \tensorover{k\Sigma_l} X \\ 
& v \otimes x & \longmapsto &\text{ } (d^v + J_{l-2}) \otimes x, \end{aligned}$$  where $d^v$ is the diagram in $J_le_l \setminus J_{l-2}e_l$ with $\top(d^v)=v$ and non-crossing propagating lines\footnote{Since $d^v$ is in $J_le_l$, its bottom row is fixed: $l$ free dots followed by $\frac{r-l}{2}$ horizontal arcs sitting side by side.}.
Let $(ae_l + J_{l-2}) \otimes x \in (A/J_{l-2})e_l \tensorover{k\Sigma_l} X$, with $ae_l + J_{l-2}$ corresponding to  $b \otimes w \otimes v_l$ under the isomorphism $J_l/J_{l-2} \simeq k\Sigma_l \otimes_k V_l \otimes_k V_l$, i.e. $ae_l+J_{l-2} = d^wb + J_{l-2}$. Then $\varphi(w \otimes bx) = (d^w + J_{l-2}) \otimes bx = (d^wb + J_{l-2}) \otimes x = (ae_l + J_{l-2}) \otimes x$, so $\varphi$ is surjective. 
By \cite[Proposition 3.5]{HHKP}, $\dim((A/J_{l-2})e_l \tensorover{k\Sigma_l} X) = \dim(k\Sigma_l^{\dim V_l} \tensorover{k\Sigma_l} X) = \dim V_l \cdot \dim X = \dim(V_l \otimes_k X)$. Hence, $\varphi$ is bijective. 
To see that $\varphi$ is an isomorphism, we have to check that it is $A$-linear. Let $a \in A$ and $v \otimes x \in V_l \otimes_k X$. Then $$\begin{aligned} \varphi(a(v\otimes x)) & = \varphi(av \otimes \pi(a,v)x) \\ &= (d^{av} + J_{l-2}) \otimes \pi(a,v)x \\ & = (d^{av}\pi(a,v) + J_{l-2}) \otimes x \end{aligned}$$ and $$\begin{aligned}
a \varphi(v \otimes x) &= a((d^v + J_{l-2}) \otimes x) \\ & = (ad^v + J_{l-2}) \otimes x.\end{aligned}$$ 
If $ad^v \in J_{l-2}$ then $a\varphi(v \otimes x)=(ad^v + J_{l-2}) \otimes x =  0$. On the other hand, $ad^v \in J_{l-2}$ implies that $av$ has more than $\frac{r-l}{2}$ horizontal arcs, so $\varphi(a(v\otimes x)) = \varphi(av \otimes \pi(a,v)x) = \varphi(0) = 0$. 
If $ad^v$ has $l$ propagating lines, then $a \in J_m \setminus J_{l-2}$ for some $m \geq l$ and $m-l$ of the free dots\footnote{In this case, a free dot is a dot which does not belong to a horizontal arc.} of $\top(a)$ are bound by horizontal arcs in $ad^v$ since the product lies in $J_l$. The remaining $l$ free dots of $\top(a)$ are end points of propagating lines in $ad^v$. Therefore, the permutation of the propagating lines of $ad^v$ is $\pi(a,v)$. This shows $a\varphi(v\otimes x)= (ad^v + J_{l-2})\otimes x = (d^{av}\pi(a,v) + J_{l-2}) \otimes x = \varphi(a(v\otimes x))$ and $\varphi$ is $A$-linear. 
\end{proof}

\begin{cor}
The cell, Young and permutation modules defined here coincide with those defined in \textnormal{\cite{HP}}.
\end{cor}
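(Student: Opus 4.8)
The plan is to leverage Proposition \ref{prop: Brauer algebra}, which identifies the layer-induction functor $ind_l$ with the functor $V_l \tensorover{k} -$ that Hartmann and Paget use in \cite{HP}. Since all three families of modules---cell, Young, and permutation---are built either directly from $ind_l$ or from $Ind_l$ together with the distinguished quotient $ind_l Y^\lambda$, matching them against the \cite{HP} definitions reduces in each case to a single application of this isomorphism.

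First I would treat the cell modules. In this article $\Theta(l,\lambda) = ind_l S_\lambda$, and Proposition \ref{prop: Brauer algebra} applied to $X = S_\lambda$ gives $ind_l S_\lambda \simeq V_l \tensorover{k} S_\lambda$, which is exactly the Graham--Lehrer cell module for the Brauer algebra used in \cite{HP}. Thus the cell modules coincide directly.

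Next I would handle the Young modules. By Theorem \ref{def: Y}, $Y(l,\lambda)$ is the unique direct summand of $Ind_l Y^\lambda$ whose quotient is isomorphic to $ind_l Y^\lambda$. Applying Proposition \ref{prop: Brauer algebra} with $X = Y^\lambda$ yields $ind_l Y^\lambda \simeq V_l \tensorover{k} Y^\lambda$, so the distinguished quotient agrees with the one prescribed in \cite{HP}. Since both definitions single out the summand of $Ind_l Y^\lambda$ carrying this quotient, and uniqueness of such a summand is already established in Theorem \ref{def: Y}, the two constructions produce the same module.

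Finally, for the permutation modules, $M(l,\lambda) = Ind_l M^\lambda$ is given by the same induction functor as in \cite{HP}, so here there is nothing to translate beyond recording that the underlying definitions agree. The only point requiring genuine care is the Young-module case, where one must verify that the \cite{HP} characterisation (the indecomposable summand with quotient $V_l \tensorover{k} Y^\lambda$) and the characterisation here (the summand with quotient $ind_l Y^\lambda$) select literally the same summand; this is precisely what Proposition \ref{prop: Brauer algebra}, combined with the uniqueness statement in Theorem \ref{def: Y}, guarantees.
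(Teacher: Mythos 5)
Your proposal is correct and matches the paper's (implicit) argument exactly: the corollary is stated without further proof precisely because Proposition \ref{prop: Brauer algebra} identifies $ind_l X \simeq V_l \tensorover{k} X$, which translates the cell modules and the distinguished quotient $V_l \tensorover{k} Y^\lambda$ of \cite{HP} into $ind_l S_\lambda$ and $ind_l Y^\lambda$, while the permutation modules use the same functor $Ind_l$ in both settings. Your extra care in invoking the uniqueness statement of Theorem \ref{def: Y} to conclude that both characterisations select literally the same summand is exactly the intended reasoning.
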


It remains to verify that $A=B_k(r,\delta)$, with $\delta \neq 0$ if $r$ is even, satisfies the assumptions (\ref{assumption: J_ne_l}) to (\ref{assumption: restriction of cell modules}).  
Let $0 \leq n \leq l \leq r$. 

The decompositions $e_lAe_l \simeq k\Sigma_l \oplus e_lJ_{l-2}e_l$ and $J_ne_l \simeq J_{n-2}e_l \oplus (J_n/J_{n-2})e_l$ hold for vector spaces. The left (resp. right) action of $k\Sigma_l-\mod$ permutes the dots of the top (resp. bottom) row, but it never changes the amount of horizontal arcs, so assumption (\ref{assumption: J_ne_l}) is satisfied. Assumption (\ref{assumption: aufplustern}) holds by \cite[Lemma 4.3]{HK1}. By \cite[Lemma 4.2]{HK1}, $e_n(J_n/J_{n-2})e_l  \simeq k\tensorover{H \times k\Sigma_n} k\Sigma_l$, where $H := k(C_2 \wr \Sigma_\frac{l-n}{2})$. We get the following isomorphisms of $k\Sigma_n$-modules $$res_nInd_lM^\lambda \simeq e_n(J_n/J_{n-2})e_l \tensorover{k\Sigma_l} M^\lambda \simeq k \tensorover{H \times k\Sigma_n} k\Sigma_l \tensorover{k\Sigma_l} M^\lambda \simeq k \tensorover{H \times k\Sigma_n} k\Sigma_l \tensorover{k\Sigma_\lambda} k.$$ The last module is equal to a direct sum of $k\Sigma_n$-permutation modules $M^\nu$ by \cite[Lemma 4.5]{HK1}. Therefore, $res_nInd_lM^\lambda \in \mathcal{F}_n(S)$ and assumption (\ref{assumption: restriction of pm}) is satisfied.
The restriction of a cell module $ind_nS_\nu$ to $k\Sigma_l-\mod$, with $l\geq n$, is dual Specht filtered by \cite[Proposition 8]{P}, thus $A$ satisfies assumption (\ref{assumption: restriction of cell modules}). This gives a new proof for the following theorem. 

\begin{thm}[\hspace*{-3pt}\cite{HP}]\label{thm: Brauer algebras}Let $\text{\textnormal{char}}k \neq 2,3$. The Brauer algebra $B_k(r,\delta)$, with $\delta \neq 0$ if $r$ is even, has permutation modules $M(l,\lambda)$, which are a direct sum of indecomposable Young modules. The Young modules are the relative projective covers of the cell modules $ind_lS_\lambda$. Every module admitting a cell filtration has well-defined filtration multiplicities.  
\end{thm}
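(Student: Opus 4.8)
The plan is to treat this theorem as a direct application of the general machinery built in Sections \ref{sec: permutation modules} and \ref{sec: properties pm}, so that the only genuine work is to confirm that the Brauer algebra falls under the hypotheses of those results. First I would record that $B_k(r,\delta)$, with $\delta \neq 0$ when $r$ is even, is cellularly stratified with input algebras $k\Sigma_{l'}$, and that each $k\Sigma_{l'}$ embeds as a subalgebra of $e_lAe_l$; this places $A$ in the setup of Subsection \ref{subsec: setup}, with the dual Specht modules $S_\lambda$ as cell modules for the $B_l$.

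Next I would verify the four conditions (\ref{assumption: J_ne_l})--(\ref{assumption: restriction of cell modules}), which is exactly the content of the computations preceding the statement. For (\ref{assumption: J_ne_l}) the key observation is that the left and right actions of $k\Sigma_l$ only permute free dots and never alter the number of horizontal arcs, so the relevant direct-sum decompositions of $e_lAe_l$ and of $J_ne_l$ survive as $k\Sigma_l$-module decompositions. Assumption (\ref{assumption: aufplustern}) is imported from \cite[Lemma 4.3]{HK1}. The heart of the matter is assumption (\ref{assumption: restriction of pm}): here I would use the Hartmann--Kessar identification $e_n(J_n/J_{n-2})e_l \simeq k \tensorover{H \times k\Sigma_n} k\Sigma_l$ with $H = k(C_2 \wr \Sigma_{\frac{l-n}{2}})$ (\hspace*{-3pt}\cite[Lemma 4.2]{HK1}) to rewrite $res_n Ind_l M^\lambda$ as $k \tensorover{H \times k\Sigma_n} k\Sigma_l \tensorover{k\Sigma_\lambda} k$, and then \cite[Lemma 4.5]{HK1} to recognise this as a direct sum of $\Sigma_n$-permutation modules, hence an object of $\mathcal{F}_n(S)$. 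Finally, assumption (\ref{assumption: restriction of cell modules}) follows from the Specht-filtration result \cite[Proposition 8]{P} for restrictions of cell modules to larger symmetric groups.

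With all assumptions in place and $\text{\textnormal{char}}k \neq 2,3$, the three claims of the theorem follow mechanically. The decomposition of $M(l,\lambda) = Ind_l M^\lambda$ into indecomposable Young modules $Y(m,\mu)$, with $a_{l,\lambda}=1$, is precisely Theorem \ref{main thm}; the identification of $Y(l,\lambda)$ as the relative projective cover of $\Theta(l,\lambda) = ind_l S_\lambda$ is the final assertion of Theorem \ref{thm: relative projective}; and the well-definedness of filtration multiplicities for every cell-filtered module is part (1) of Corollary \ref{thm: HHKP 13.1}, which itself rests on Lemma \ref{lemma: standard system} guaranteeing that $\Theta$ is a standard system once $\text{\textnormal{char}}k \neq 2,3$.

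The main obstacle lies not in the deduction — once the assumptions hold the theorem is immediate — but in the verification of assumption (\ref{assumption: restriction of pm}), that is, in showing that the layer-removing restriction of a permutation module to a symmetric group is again a direct sum of permutation modules. This is exactly where the combinatorics specific to Brauer diagrams, encoded in the wreath-product structure of the stabiliser $H$, enters, and it is the one step that cannot be obtained purely formally from the cellularly stratified structure. Accordingly I would rely on the combinatorial lemmas of \cite{HK1} rather than reprove them, which is in keeping with the stated goal of recovering the results of \cite{HP} through structural rather than diagram-specific arguments.
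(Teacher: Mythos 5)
Your proposal is correct and follows the paper's own argument essentially verbatim: the same verification of assumptions (\ref{assumption: J_ne_l})--(\ref{assumption: restriction of cell modules}) via the same sources (the arc-preserving $k\Sigma_l$-action for (\ref{assumption: J_ne_l}), \cite[Lemmas 4.2, 4.3 and 4.5]{HK1} for (\ref{assumption: aufplustern}) and (\ref{assumption: restriction of pm}), and \cite[Proposition 8]{P} for (\ref{assumption: restriction of cell modules})), followed by the same appeal to Theorem \ref{main thm}, Theorem \ref{thm: relative projective} and Corollary \ref{thm: HHKP 13.1} under the hypothesis $\text{\textnormal{char}}k \neq 2,3$. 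The only ingredient you omit is Proposition \ref{prop: Brauer algebra}'s isomorphism $ind_lX \simeq V_l \otimes_k X$, which the paper needs only to identify its cell, Young and permutation modules with those of \cite{HP}, not to derive the statement itself.
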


\subsection{New Results for Partition Algebras}\label{sec: applications partition algebras}
Now, let $A=P_k(r,\delta)$ be the partition algebra on $r$ dots with $\delta \in k \setminus \{0\}$. Then $A$ is cellularly stratified by \cite[Proposition 2.6]{HHKP}. The stratification data, as well as an isomorphism $P_k(l,\delta) \simeq e_lP_k(r,\delta)e_l$ for $0 \leq l \leq r$, was described in Section \ref{sec: preliminaries}. We use the following embedding of $k\Sigma_l$ into $P_k(r,\delta)$. Let $d(\pi)\in P_k(l,\delta)$ be the diagram describing the permutation $\pi \in \Sigma_l$, i.e. the dot $i$ in the top row is connected to the dot $\pi(i)$ in the bottom row, and these are all the connections. Then $d(\pi)$ becomes an element of $P_k(r,\delta)$ by attaching dots $l+1, ..., r$ to the right of the  top row and connecting all these new dots to the $l$\th dot of the top row. Do the same for the bottom row. This embedding agrees with the isomorphism $P_k(l,\delta)\simeq e_lP_k(r,\delta)e_l$ from Section \ref{sec: preliminaries}. 

\begin{ex}
Let $\pi = (1432) \in \Sigma_4$ and let $r=7$. Then $d(\pi)= \begin{minipage}[c]{2.5cm} \xysmall{\bullet \tra[drrr] & \bullet \tra[dl] & \bullet \tra[dl] & \bullet \tra[dl] \\ \bullet & \bullet & \bullet & \bullet}
\end{minipage}$ is clearly an element of $P_k(4,\delta)$. The corresponding element in $P_k(7,\delta)$ is \linebreak $\begin{minipage}{3cm}
\xysmall{\bullet \tra[drrr] & \bullet \tra[dl] & \bullet \tra[dl] & \bullet \tra[dl] \tra[r] & \bullet \tra[r] &  \bullet \tra[r] & \bullet \\ \bullet & \bullet & \bullet & \bullet \tra[r] & \bullet \tra[r] &  \bullet \tra[r] & \bullet}
\end{minipage}.$
\end{ex}  

In particular, for each $0 \leq l \leq r$, the input algebra $k\Sigma_l$ of the cellularly stratified structure is a subalgebra of $e_lAe_l$. 

It remains to show that $A$ satisfies conditions (\ref{assumption: J_ne_l}) to (\ref{assumption: restriction of cell modules}). Fix some $l$ between $0$ and $r$ and remember that $J_l$ denotes the two-sided ideal $Ae_lA$. Set $J_{-1}:=0$.

The left (resp. right) action of $k\Sigma_l$ on a partition diagram $d$ permutes the top (resp. bottom) row of $d$, but it never changes the size of a part of $d$. In particular, the number of propagating lines remains invariant under the $k\Sigma_l$-action and the decompositions from assumption (\ref{assumption: J_ne_l}) are indeed decompositions of $k\Sigma_l$-(bi)modules. 
 
For $0 \leq n \leq l$, the basis diagrams of $(J_n/J_{n-1})e_l$ have exactly $n$ propagating parts and the last $r-l+1$ dots of the bottom row belong to the same part. Hence, we have an isomorphism of vector spaces $(J_n/J_{n-1})e_l \simeq k\Sigma_n \otimes_k V_n \otimes_k V_n^l$, where $V_n$ is the vector space of partial diagrams with exactly $n$ labelled parts and $V_n^l$ is the subspace of $V_n$ where the last $r-l+1$ dots belong to the same part. This shows assumption (II$'$) is satisfied and thus, by Lemma \ref{lemma: replace assumption}, assumption (\ref{assumption: aufplustern}) is satisfied as well.

Assumption (\ref{assumption: restriction of cell modules}) holds by \cite[Theorem 1]{arxiv} in case $\text{\textnormal{char}} k > \lfloor\frac{r}{3}\rfloor$. The condition on the characteristic is sufficient, but potentially too strong, as explained in \cite{arxiv}.

We now prove that assumption (\ref{assumption: restriction of pm}) is satisfied.
Fix $0 \leq n \leq l \leq r$. When dealing with the size of a part in a partial diagram, we will from now on count the last $r-l+1$ dots as one. 
Let $v,w \in V_n^l$. We say that $v$ is equivalent to $w$, $v \sim w$, if and only if there is a $\pi \in \Sigma_l$ such that $v\pi = w$, where $v\pi $ is defined as follows. Write the diagram $\pi$ below $v$ and identify $\top(\pi)$ with $v$. Then $v\pi $ is the bottom row of this diagram, where a part is labelled if and only if it contains at least one labelled dot. 
In diagrams, this means that $v$ and $w$ are equivalent if and only if for each size, the number of labelled parts and the number of unlabelled parts of $v$ and $w$ coincide. Remember that the last $r-l+1$ dots count as one. 

For $v \in V_n^l$, we define $d_v$ to be the diagram in $P_k(r,\delta)$ with $\top(d_v)=\top(e_n) $, $\bottom(d_v)= v$ and $\Pi(d_v)=1_{k\Sigma_n}$. 
Let $b \in e_n(A/J_{n-1})e_l$ be a diagram with $\bottom(b) \sim v$. By definition, there is a $\pi \in \Sigma_l$ such that $\bottom(b)=v\pi$. Then $b = \Pi(b) \Pi(d_v \pi)^{-1}  d_v\pi$.
 Let $U_v$ be the $(k\Sigma_n,k\Sigma_l)$-bimodule generated by $d_v$. 

\begin{lem}[{\hspace*{-3pt}\cite[Lemma 1]{arxiv}}]\label{lemma: arxiv Uv}
 The $(k\Sigma_n,k\Sigma_l)$-bimodule $e_n(A/J_{n-1})e_l$ decomposes into $\bigoplus\limits_{v \in V_n^l/_\sim} U_v$.
\end{lem}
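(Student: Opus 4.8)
The plan is to exhibit an explicit basis of the bimodule $e_n(A/J_{n-1})e_l$, to recognise that the generator $d_v$ together with the two-sided $(k\Sigma_n,k\Sigma_l)$-action sweeps out exactly those basis diagrams whose bottom row is $\sim$-equivalent to $v$, and then to observe that these subsets are disjoint and exhaust the basis, so the decomposition respects the basis.

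First I would describe the basis. The iterated inflation isomorphism $J_n/J_{n-1} \simeq k\Sigma_n \otimes_k V_n \otimes_k V_n$, combined with right multiplication by $e_l$, which (as recorded in the verification of (II$'$) above) restricts the bottom partial diagram to $V_n^l$, gives $(J_n/J_{n-1})e_l \simeq k\Sigma_n \otimes_k V_n \otimes_k V_n^l$. Left multiplication by $e_n$ then pins the top partial diagram to $\top(e_n)$, so that $e_n(A/J_{n-1})e_l = e_n(J_n/J_{n-1})e_l$ has a basis consisting of the diagrams $b$ with exactly $n$ propagating parts, $\top(b) = \top(e_n)$ and $\bottom(b) \in V_n^l$. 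Each such $b$ is determined by the pair $(\Pi(b),\bottom(b)) \in \Sigma_n \times V_n^l$; in these coordinates the left $k\Sigma_n$-action changes only $\Pi(b)$, while the right $k\Sigma_l$-action sends $\bottom(b)=w$ to $w\pi$ and adjusts $\Pi(b)$ accordingly.

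Next I would compute $U_v$. Every generator $\tau\, d_v\, \pi$ with $\tau \in \Sigma_n$ and $\pi \in \Sigma_l$ has bottom row $v\pi$, hence $\bottom$ in the $\sim$-class of $v$; thus $U_v$ is contained in the span of the basis diagrams $b$ with $\bottom(b) \sim v$. For the reverse inclusion I would invoke the identity $b = \Pi(b)\,\Pi(d_v\pi)^{-1}\, d_v\,\pi$ established just before the statement: given a basis diagram $b$ with $\bottom(b) = v\pi \sim v$, this exhibits $b$ as a left $\Sigma_n$-translate of the right $\Sigma_l$-translate $d_v\pi$, so $b \in U_v$. Hence $U_v$ is precisely the span of the basis diagrams whose bottom row lies in the $\sim$-class of $v$. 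In particular $U_v$ depends only on that class, so the indexing by $V_n^l/_\sim$ is well defined, and each $U_v$ is a genuine sub-bimodule, since the left action preserves the bottom row and the right action keeps it inside its $\sim$-class.

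Finally, because $\sim$ partitions $V_n^l$, the basis of $e_n(A/J_{n-1})e_l$ is partitioned according to the $\sim$-class of the bottom row, and $U_v$ is spanned by exactly the block indexed by the class of $v$. Distinct classes give disjoint subsets of basis vectors, so the $U_v$ together span the whole bimodule and their sum is direct, yielding $e_n(A/J_{n-1})e_l = \bigoplus_{v \in V_n^l/_\sim} U_v$. I expect the main obstacle to be the bookkeeping in the computation of $U_v$: one must check that right multiplication by $\pi \in \Sigma_l$ realises every bottom row in the $\sim$-class of $v$ without decreasing the number of propagating parts or leaving $V_n^l$, and that the propagating permutation transforms exactly as the identity $b = \Pi(b)\,\Pi(d_v\pi)^{-1}\, d_v\,\pi$ predicts. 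This is where the convention of connecting parts through their leftmost dots, and the resulting well-definedness of $\Pi$, are essential.
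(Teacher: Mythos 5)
Your argument is correct: the diagrams $b$ with $\top(b)=\top(e_n)$, exactly $n$ propagating parts and $\bottom(b)\in V_n^l$ do form a basis permuted by the $(\Sigma_n,\Sigma_l)$-action (right multiplication by an embedded $\pi\in\Sigma_l$ creates no inner circles and cannot merge labelled parts, since every part of the embedded diagram meets both rows and the last $r-l+1$ dots of $\bottom(b)$ form a single part, so no $\delta$-factors appear and the propagating number stays $n$), and the identity $b=\Pi(b)\,\Pi(d_v\pi)^{-1}\,d_v\pi$ shows the orbit of $d_v$ spans exactly the block of basis diagrams with $\bottom(b)\sim v$, giving the direct sum over $V_n^l/_\sim$. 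Note that the paper itself offers no proof here — the lemma is imported from \cite[Lemma 1]{arxiv} — but the ingredients it records immediately beforehand (the basis description from the verification of (II$'$) and that identity) are precisely the ones you deploy, so your reconstruction coincides with the intended argument.
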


Fix a partial diagram $v \in V_n^l$ and set $d:=d_v$. Let $\alpha_i$ be the number of labelled parts of size $i$ and $\beta_i$ the number of unlabelled parts of size $i$ of $v$, where again the last $r-l+1$ dots count as one dot. Then $\sum\limits_i (\alpha_i \cdot i) + \sum\limits_i (\beta_i \cdot i) = l$ and $\sum\limits_i \alpha_i = n$. Without loss of generality, assume that the parts of $v$ are ordered as follows. The labelled parts are on the left hand side, the unlabelled parts on the right hand side. The parts are then ordered increasingly from left to right. 

Let $\mathcal{S}_i^j \subseteq \{1,...,l\}$ be the set of dots of $v$ belonging to the $j$\th labelled part of size $i$ and let $\mathcal{T}_i^j \subseteq \{1,...,l\}$ be the set of dots of $v$ belonging to the $j$\th unlabelled part of size $i$. Then $\prod_\alpha:= \prod\limits_{{i \geq 1,  \alpha_i \neq 0}}((\Sigma_{\mathcal{S}_i^1} \times ... \times \Sigma_{\mathcal{S}_i^{\alpha_i}}) \rtimes \Sigma_{\alpha_i})$ is the stabilizer subgroup of $\Sigma_l$ which stabilizes exactly the labelled parts of $v$. Similarly, the stabilizer subgroup of $\Sigma_l$ which stabilizes the unlabelled parts of $v$ is $\prod_\beta:= \prod\limits_{{i \geq 1, \beta_i \neq 0}}((\Sigma_{\mathcal{T}_i^1} \times ... \times \Sigma_{\mathcal{T}_i^{\beta_i}}) \rtimes \Sigma_{\beta_i})$. In particular, $\prod_\beta$ stabilizes $d$, while $\prod_\alpha$ permutes the propagating lines of $d$. Note that $\prod_\alpha \simeq \prod\limits_{i \geq 1, \alpha_i \neq 0} (\Sigma_i \wr \Sigma_{\alpha_i})$
 and $\prod_\beta \simeq \prod\limits_{i \geq 1, \beta_i \neq 0} (\Sigma_i \wr \Sigma_{\beta_i})$, where $\wr$ denotes the wreath product. Define a right-action of $\prod_\alpha \times \prod_\beta$ on $k\Sigma_n$ via $\eta \cdot \zeta := \eta \Pi(d\zeta)$ for $\eta \in \Sigma_n$ and $\zeta \in \prod_\alpha\times \prod_\beta$, i.e. $\prod_\alpha \times \prod_\beta$ acts on $k\Sigma_n$ via the canonical epimorphism $$\rho:  \prod\limits_{i \geq 1, \alpha_i \neq 0} (\Sigma_i \wr \Sigma_{\alpha_i}) \times \prod\limits_{i \geq 1, \beta_i \neq 0} (\Sigma_i \wr \Sigma_{\beta_i}) \twoheadrightarrow \Sigma_\alpha.$$ Then we can define the tensor product $k\Sigma_n \tensorover{k\prod_\alpha \times k\prod_\beta} k\Sigma_l$. 
 
\begin{lem}[{\hspace*{-3pt}\cite[Lemma 2]{arxiv}}]\label{lemma: tensor decomposition of Uv}
There is an isomorphism of $(k\Sigma_n,k\Sigma_l)$-bimodules $k\Sigma_n \tensorover{k\prod_\alpha \times k\prod_\beta} k\Sigma_l  \longrightarrow U_v $ given by $\eta \otimes \tau \longmapsto \eta d \tau$. 
\end{lem}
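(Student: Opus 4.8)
The plan is to check directly that the assignment $\Phi\colon\eta\otimes\tau\mapsto\eta d\tau$, with $d=d_v$, is a well-defined isomorphism of $(k\Sigma_n,k\Sigma_l)$-bimodules. Bimodule linearity is immediate once well-definedness is in place, since $\Phi(\sigma\eta\otimes\tau\pi)=\sigma\eta d\tau\pi=\sigma\,\Phi(\eta\otimes\tau)\,\pi$ for $\sigma\in\Sigma_n$ and $\pi\in\Sigma_l$. So the real content lies in three points: well-definedness over $k\prod_\alpha\times k\prod_\beta$, surjectivity, and injectivity.

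First I would isolate the single diagrammatic identity on which everything rests, namely
\begin{equation*} d\zeta=\Pi(d\zeta)\,d=\rho(\zeta)\,d \qquad\text{for all }\zeta\in\prod\nolimits_\alpha\times\prod\nolimits_\beta. \end{equation*}
Right multiplication by $\zeta$ leaves $\top(d)=\top(e_n)$ untouched and, as $\zeta$ stabilises the labelled set partition $v=\bottom(d)$, leaves $\bottom(d\zeta)=v$ unchanged as a partial diagram; the only net effect is to reshuffle the propagating lines according to the induced permutation $\Pi(d\zeta)=\rho(\zeta)\in\Sigma_\alpha\subseteq\Sigma_n$. Left multiplication of $d$ by $\rho(\zeta)$ produces the very same diagram, because $\Pi(d)=1$. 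Granting this, well-definedness is automatic: for $\zeta\in\prod_\alpha\times\prod_\beta$ the relation to verify is $\Phi(\eta\cdot\zeta\otimes\tau)=\Phi(\eta\otimes\zeta\tau)$, and the two sides are $\eta\rho(\zeta)d\tau$ and $\eta d\zeta\tau$, which coincide exactly by the displayed identity.

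Surjectivity is clear, since $U_v$ is by definition the $(k\Sigma_n,k\Sigma_l)$-bimodule generated by $d$, hence spanned by the diagrams $\eta d\tau=\Phi(\eta\otimes\tau)$ (no closed loop arises when one multiplies a diagram by permutation diagrams, so each $\eta d\tau$ is again a single basis diagram). For injectivity I would match dimensions. Writing $R:=\prod_\alpha\times\prod_\beta$, the module $k\Sigma_l$ is free as a left $kR$-module of rank $[\Sigma_l:R]$, so $k\Sigma_n\otimes_{kR}k\Sigma_l$ has dimension $n!\cdot[\Sigma_l:R]=n!\,l!/|R|$. On the other side, $\dim U_v$ is the number of distinct diagrams $\eta d\tau$, i.e.\ $|\Sigma_n\times\Sigma_l|$ divided by the order of the stabiliser $\{(\eta,\tau):\eta d\tau=d\}$. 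Here the key computation is that $\eta d\tau=d$ forces $\tau\in R$ (inspect the bottom row, whose labelled set partition must be preserved, using $\mathrm{Stab}_{\Sigma_l}(v)=R$) and then, via the identity above, $\eta=\rho(\tau)^{-1}$; the stabiliser is thus $\{(\rho(\tau)^{-1},\tau):\tau\in R\}$, of order $|R|$, giving $\dim U_v=n!\,l!/|R|$. The two dimensions agree, so the surjection $\Phi$ is an isomorphism. Equivalently, for a fixed set of representatives $g$ of the right cosets of $R$ in $\Sigma_l$ one checks that the diagrams $\eta dg$ with $\eta\in\Sigma_n$ are pairwise distinct, so that $\Phi$ carries the basis $\{\eta\otimes g\}$ to linearly independent elements.

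The main obstacle is the bookkeeping around the displayed identity and the stabiliser computation: one must track carefully how the right $\Sigma_l$-action permutes the blocks of $v$, verify that permutations of equal-size \emph{labelled} blocks (the factors $\Sigma_{\alpha_i}$) induce exactly the propagating permutation $\rho(\zeta)$ whereas permutations within blocks and of unlabelled blocks ($\prod_\beta$ together with the base groups) fix $d$, and confirm that the leftmost-dot convention makes $\Pi$ behave multiplicatively under these operations. Everything else is formal, and the equality $\mathrm{Stab}_{\Sigma_l}(v)=R$ is simply the statement that a permutation fixes the partial diagram $v$ iff it preserves both its underlying set partition and its labelling.
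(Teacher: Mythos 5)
Your proof is correct. Note that the paper itself does not reproduce a proof of this lemma --- it is quoted from \cite[Lemma 2]{arxiv} --- so there is no in-text argument to compare against; but your argument is exactly consistent with the one ingredient the paper does record, namely the factorisation $b = \Pi(b)\,\Pi(d_v\pi)^{-1}\,d_v\pi$ for any diagram $b$ with $\top(b)=\top(e_n)$ and $\bottom(b)\sim v$, which is precisely your key identity $d\zeta = \Pi(d\zeta)\,d = \rho(\zeta)\,d$ specialised to $\pi=\zeta\in\prod_\alpha\times\prod_\beta$. That identity does all the work: it makes the balanced-map condition for the twisted right action $\eta\cdot\zeta = \eta\,\Pi(d\zeta)$ tautological, and it shows as a by-product that $\rho=\Pi(d\,\cdot\,)$ is multiplicative on $\prod_\alpha\times\prod_\beta$ (from $d\zeta\zeta' = \rho(\zeta)d\zeta' = \rho(\zeta)\rho(\zeta')d$), which you implicitly need for the stabiliser computation. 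Your dimension count is sound: $k\Sigma_l$ is free as a left $k(\prod_\alpha\times\prod_\beta)$-module of rank $[\Sigma_l:\prod_\alpha\times\prod_\beta]$, the diagrams $\eta d\tau$ are genuine basis diagrams (multiplication by the embedded permutation diagrams creates no inner circles, hence no $\delta$-factors, and preserves the number $n$ of propagating parts, so distinct diagrams stay independent modulo $J_{n-1}$), and $\eta d\tau = d$ forces $\tau\in\mathrm{Stab}_{\Sigma_l}(v)=\prod_\alpha\times\prod_\beta$ and then $\eta=\rho(\tau)^{-1}$. Two cosmetic caveats: for orbit--stabiliser you should phrase the action as $(\eta,\tau)\cdot x = \eta x\tau^{-1}$ so that it is a genuine group action (the count $n!\,l!/\lvert\prod_\alpha\times\prod_\beta\rvert$ is unaffected); and your cleaner alternative at the end --- that $\{\eta d g_i\}$ for right coset representatives $g_i$ are pairwise distinct because distinct cosets give distinct bottom rows $vg_i$ while distinct $\eta$ give distinct propagating permutations --- is arguably the most direct route to injectivity and could replace the counting argument entirely.
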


We want to understand the summands $k\Sigma_n \tensorover{k\prod_\alpha \times k\prod_\beta} k\Sigma_l \tensorover{k\Sigma_\lambda} k$ of $res_nInd_lM^\lambda = e_n(A/J_{n-1})e_l \tensorover{k\Sigma_l} k\Sigma_l \tensorover{k\Sigma_\lambda} k$ for a partition $\lambda$ of $l$. Fix double coset representatives $\pi_1,...,\pi_q$ of $(\prod_\alpha \times \prod_\beta)\backslash \Sigma_l /\Sigma_\lambda$. To each $\pi_i$, we attach a composition $\nu^i$ as follows. Set $\prod_{\nu^i} := (\prod_\alpha \times \prod_\beta) \cap \pi_i\Sigma_\lambda\pi_i^{-1}$. Then $\zeta \in \prod_\alpha \times \prod_\beta$ is in $\prod_{\nu^i}$ if and only if there is a $\vartheta \in \Sigma_\lambda$ such that $\zeta\pi_i = \pi_i\vartheta$. Since $\pi_i\Sigma_\lambda\pi_i^{-1}$ is isomorphic to $\Sigma_\lambda$, it is a Young subgroup of $\Sigma_l$, and $\prod_\alpha \times \prod_\beta$ is a direct product of wreath products of symmetric groups. Then the intersection $(\prod_\alpha \times \prod_\beta) \cap \pi\Sigma_\lambda\pi^{-1}$ is again a product of wreath products. The image of $\prod_{\nu^i}$ under the canonical epimorphism $\rho$ is a Young subgroup of $\Sigma_n$, which we denote by $\Sigma_{\nu^i}$. 

An example for $\Pi_{\nu^i}$, $\Sigma_{\nu^i}$ and a GAP-algorithm to compute them can be found in the appendix. 

\begin{prop}\label{prop: partition algebras} The left $k\Sigma_n$-module $k\Sigma_n \tensorover{k(\prod_\alpha \times \prod_\beta)} k\Sigma_l \tensorover{k\Sigma_\lambda} k$ is isomorphic to the direct sum $\bigoplus\limits_{i=1}^q (k\Sigma_n \tensorover{k\Sigma_{\nu^i}} k)$ of various permutation modules. In particular, it admits a filtration by dual $k\Sigma_n$-Specht modules.
\end{prop}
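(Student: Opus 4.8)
The plan is to recognise the module as a Mackey double-coset decomposition followed by the $\rho$-twisted collapse built into the definition of $\Sigma_{\nu^i}$. Write $G := \prod_\alpha \times \prod_\beta$. The inner factor $k\Sigma_l \tensorover{k\Sigma_\lambda} k$ is simply the permutation module $M^\lambda$ of $\Sigma_l$, viewed as a left $G$-module by restriction along the inclusion $G \hookrightarrow \Sigma_l$. Since the functor $k\Sigma_n \tensorover{kG} -$ is additive, the whole computation reduces to decomposing $\operatorname{Res}^{\Sigma_l}_G M^\lambda = \operatorname{Res}^{\Sigma_l}_G \operatorname{Ind}^{\Sigma_l}_{\Sigma_\lambda} k$ as a left $G$-module and then applying $k\Sigma_n \tensorover{kG} -$ to each summand.

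First I would invoke Mackey's decomposition theorem relative to the double-coset representatives $\pi_1,\dots,\pi_q$ of $G\backslash\Sigma_l/\Sigma_\lambda$ fixed above. As the module being induced is trivial, and both the restriction and the $\pi_i$-conjugation of a trivial module are again trivial, Mackey yields an isomorphism of left $G$-modules
\[
\operatorname{Res}^{\Sigma_l}_G \operatorname{Ind}^{\Sigma_l}_{\Sigma_\lambda} k \;\simeq\; \bigoplus_{i=1}^q \operatorname{Ind}^G_{G \cap \pi_i\Sigma_\lambda\pi_i^{-1}} k \;=\; \bigoplus_{i=1}^q \operatorname{Ind}^G_{\prod_{\nu^i}} k,
\]
using the very definition $\prod_{\nu^i} = G \cap \pi_i\Sigma_\lambda\pi_i^{-1}$. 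Tensoring with $k\Sigma_n$ over $kG$, using associativity of the tensor product together with $k\Sigma_n \tensorover{kG} kG \simeq k\Sigma_n$ as right $\prod_{\nu^i}$-modules, turns each summand into
\[
k\Sigma_n \tensorover{kG}\bigl(kG \tensorover{k\prod_{\nu^i}} k\bigr) \;\simeq\; k\Sigma_n \tensorover{k\prod_{\nu^i}} k,
\]
where $\prod_{\nu^i}$ now acts on $k\Sigma_n$ on the right through the restriction of $\rho$.

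The remaining and only genuinely non-formal step is the collapse $k\Sigma_n \tensorover{k\prod_{\nu^i}} k \simeq M^{\nu^i}$, where the twist by $\rho$ must be tracked carefully; this is the point I expect to be the main obstacle, since one must confirm that it is exactly the $\rho$-twisted right action surviving the Mackey decomposition and that the passage from $\prod_{\nu^i}\subseteq\Sigma_l$ to its image $\Sigma_{\nu^i}\subseteq\Sigma_n$ is forced by the trivial coefficients. Concretely, because the right $\prod_{\nu^i}$-action on $k\Sigma_n$ factors through $\rho$ and $k$ is trivial, in the tensor product one has $\eta\rho(\zeta) \otimes 1 = \eta \otimes 1$ for all $\eta \in \Sigma_n$ and $\zeta \in \prod_{\nu^i}$. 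Since $\rho$ maps $\prod_{\nu^i}$ onto the Young subgroup $\Sigma_{\nu^i}$, these are precisely the relations defining $k\Sigma_n \tensorover{k\Sigma_{\nu^i}} k = M^{\nu^i}$, so the identity on basis elements induces an isomorphism of left $\Sigma_n$-modules $k\Sigma_n \tensorover{k\prod_{\nu^i}} k \simeq M^{\nu^i}$. Reassembling gives $k\Sigma_n \tensorover{kG} k\Sigma_l \tensorover{k\Sigma_\lambda} k \simeq \bigoplus_{i=1}^q M^{\nu^i}$. For the final assertion, each $M^{\nu^i}$ is a permutation module of $\Sigma_n$ and hence admits a filtration by dual Specht modules (standard, e.g.\ by Young's rule combined with the self-duality of permutation modules), and a direct sum of such modules again lies in $\mathcal{F}_n(S)$.
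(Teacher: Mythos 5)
Your proof is correct, and it takes a genuinely different route from the paper's. The paper constructs the isomorphism by bare hands: it defines $\varphi(\eta\otimes\tau\otimes 1)=(\eta\Pi(d\zeta)\otimes 1)^{(i)}$ on generators using the double coset decomposition $\tau=\zeta\pi_i\vartheta$, writes down the explicit inverse $\psi(\sum_i(\eta_i\otimes 1)^{(i)})=\sum_i \eta_i\otimes\pi_i\otimes 1$, and verifies well-definedness of both maps directly, the key point for $\psi$ being that a lift $\hat{\xi}\in\prod_\alpha$ of $\xi\in\Sigma_{\nu^i}$ lies in $\pi_i\Sigma_\lambda\pi_i^{-1}$ and can therefore be absorbed into the trivial $\Sigma_\lambda$-coefficient. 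You instead delegate exactly these double-coset verifications to Mackey's formula, $\operatorname{Res}^{\Sigma_l}_{G}\operatorname{Ind}^{\Sigma_l}_{\Sigma_\lambda}k\simeq\bigoplus_i \operatorname{Ind}^G_{\prod_{\nu^i}}k$ (valid over any field since the modules are trivial), and then reduce the rest to two formal tensor identities: associativity, and the observation that since the right $\prod_{\nu^i}$-action on $k\Sigma_n$ factors through the surjection $\rho\colon\prod_{\nu^i}\twoheadrightarrow\Sigma_{\nu^i}$ with trivial coefficients, the relation subspaces spanned by $\eta\rho(\zeta)-\eta$ and by $\eta\sigma-\eta$, $\sigma\in\Sigma_{\nu^i}$, coincide, so $k\Sigma_n\tensorover{k\prod_{\nu^i}}k\simeq k\Sigma_n\tensorover{k\Sigma_{\nu^i}}k$ as left $k\Sigma_n$-modules. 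Your version is shorter, isolates the only partition-algebra-specific input (the action factoring through $\rho$ with Young-subgroup image), and generalises immediately to any similar twisted setup; the paper's version buys an explicit isomorphism on generators, which matches the concrete computations of $\prod_{\nu^i}$ and $\Sigma_{\nu^i}$ carried out in the appendix and its GAP code. Your justification of the final sentence (Young's rule plus self-duality of permutation modules) is also fine and is in fact slightly more explicit than the paper, which leaves that step implicit.
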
 

\begin{proof}
We define a map $$\varphi: k\Sigma_n \tensorover{k(\prod_\alpha \times \prod_\beta)} k\Sigma_l \tensorover{k\Sigma_\lambda} k \longrightarrow \bigoplus\limits_{i=1}^q (k\Sigma_n \tensorover{k\Sigma_{\nu^i}} k)$$ as follows. Let $\eta \in \Sigma_n$ and $\tau \in \Sigma_l$ with $\tau = \zeta\pi_i \vartheta$ for some $\zeta \in \prod_\alpha\times \prod_\beta$ and $\vartheta\in \Sigma_\lambda$. Set $\varphi(\eta \otimes \tau \otimes 1) = (0,...,0,\eta \Pi(d\zeta) \otimes 1,0,...,0) =: (\eta \Pi(d\zeta) \otimes 1)^{(i)}$ with non-zero entry only in the $i\th$ summand. Extend this $k\Sigma_n$-linearly to get a $k\Sigma_n$-homomorphism. 

We have to show that this map is well-defined, that is we have to show that whenever two elements $\eta \otimes \tau \otimes 1$ and $\eta' \otimes \tau' \otimes 1$ are equivalent in $k\Sigma_n \tensorover{k(\prod_\alpha \times \prod_\beta)} k\Sigma_l \tensorover{k\Sigma_\lambda} k$, then their images are equivalent in $\bigoplus\limits_{i=1}^q (k\Sigma_n \tensorover{k\Sigma_{\nu^i}} k)$. 

Let $\eta \otimes \tau \otimes 1 = \eta' \otimes \tau' \otimes 1$ with $\eta, \eta' \in \Sigma_n$ and $\tau, \tau' \in \Sigma_l$ and let $\tau = \zeta \pi_i \vartheta$ and $\tau'=\zeta' \pi_j \vartheta'$. Since $\eta \otimes \tau \otimes 1 = \eta' \otimes \tau' \otimes 1$, we have $i=j$ and $\eta\Pi(d\zeta) = \eta'\Pi(d\zeta')$. It follows that $\varphi(\eta \otimes \tau \otimes 1) = \varphi(\eta \otimes \zeta \pi_i \vartheta \otimes 1) = (\eta\Pi(d\zeta) \otimes 1)^{(i)} = (\eta'\Pi(d\zeta')\otimes 1)^{(i)} = \varphi(\eta' \otimes \tau' \otimes 1) $, so $\varphi$ is well-defined.

The inverse is given by $$\psi: \bigoplus\limits_{i=1}^q (k\Sigma_n \tensorover{k\Sigma_{\nu^i}}k) \longrightarrow k\Sigma_n \tensorover{k(\prod_\alpha\times\prod_\beta)}k\Sigma_l \tensorover{k\Sigma_\lambda} k$$ with $\psi(\sum\limits_{i=1}^q (\eta_i \otimes 1)^{(i)}) = \sum\limits_{i=1}^q \eta_i \otimes \pi_i \otimes 1$ for $\eta_i \in \Sigma_n$: 

$$(\psi \circ \varphi)(\eta  \otimes \zeta\pi_i \vartheta \otimes 1) = \psi((\eta \Pi(d\zeta) \otimes 1)^{(i)}) = \eta\Pi(d\zeta) \otimes \pi_i \otimes 1 = \eta \otimes \zeta\pi_i\vartheta \otimes 1$$ and $$(\varphi \circ \psi)((\eta \otimes 1)^{(i)}) = \varphi(\eta \otimes \pi_i \otimes 1) = (\eta\otimes 1)^{(i)}$$
for $\eta \in \Sigma_n$, $\zeta \in \prod_\alpha \times \prod_\beta$ and $\vartheta \in \Sigma_\lambda$.  

It remains to show that $\psi$ is well-defined. Let $\eta, \eta' \in \Sigma_n$ such that $\eta \otimes 1$ and $\eta' \otimes 1$ are equivalent in $k\Sigma_n \tensorover{k\Sigma_{\nu^i}} k$ for some $i$. Then there is a $\xi \in \Sigma_{\nu^i}$ such that $\eta' = \eta\xi$. It follows that $\psi((\eta' \otimes 1)^{(i)}) = \eta' \otimes \pi_i \otimes 1 = \eta\xi \otimes \pi_i \otimes 1 = \eta \otimes \hat{\xi}\pi_i \otimes 1$ for some $\hat{\xi} \in \prod_\alpha$ with $\Pi(d\hat{\xi}) = \xi$. By definition of $\Sigma_{\nu^i}$ as the image of the canonical projection $\prod_{\nu^i} \rightarrow \Sigma_n$, we have $\hat{\xi} \in \pi_i\Sigma_\lambda\pi_i^{-1}$. So there is a $\vartheta \in \Sigma_\lambda$ such that $\hat{\xi}\pi_i = \pi_i\vartheta$. Therefore we have $\psi((\eta' \otimes 1)^{(i)}) = \eta \otimes \pi_i\vartheta \otimes 1 = \eta \otimes \pi_i \otimes 1 = \psi((\eta \otimes 1)^{(i)})$ and $\psi=\varphi^{-1}$ is well-defined.    
\end{proof}

\begin{cor} Layer restriction of a permutation module is isomorphic to a direct sum of permutation modules. In particular, layer restriction of a permutation module has a dual Specht filtration.
\end{cor}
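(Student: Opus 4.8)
The plan is to assemble the isomorphisms already established in Lemmas~\ref{lemma: arxiv Uv} and~\ref{lemma: tensor decomposition of Uv} and in Proposition~\ref{prop: partition algebras}; the corollary is essentially the culmination of the verification of assumption~(\ref{assumption: restriction of pm}) carried out in this subsection. First I would unwind the definitions of the two functors from Subsection~\ref{subsec: setup}: since $res_n N \simeq e_n(A/J_{n-1})\tensorover{A} N$ and $Ind_l M^\lambda = Ae_l \tensorover{k\Sigma_l} M^\lambda$, associativity of the tensor product together with the standard identity $e_n(A/J_{n-1})\tensorover{A} Ae_l \simeq e_n(A/J_{n-1})e_l$ gives
$$res_n Ind_l M^\lambda \simeq e_n(A/J_{n-1})e_l \tensorover{k\Sigma_l} M^\lambda,$$
exactly the formula appearing in assumption~(\ref{assumption: restriction of pm}).

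Next I would apply Lemma~\ref{lemma: arxiv Uv} to decompose the $(k\Sigma_n,k\Sigma_l)$-bimodule $e_n(A/J_{n-1})e_l$ as $\bigoplus_{v \in V_n^l/_\sim} U_v$. Because the functor $-\tensorover{k\Sigma_l} M^\lambda$ is additive, this transports to a direct sum decomposition of left $k\Sigma_n$-modules
$$res_n Ind_l M^\lambda \simeq \bigoplus\limits_{v \in V_n^l/_\sim} \big(U_v \tensorover{k\Sigma_l} M^\lambda\big).$$
Then, using Lemma~\ref{lemma: tensor decomposition of Uv} to rewrite each $U_v$ as $k\Sigma_n \tensorover{k(\prod_\alpha \times \prod_\beta)} k\Sigma_l$ and writing $M^\lambda = k\Sigma_l \tensorover{k\Sigma_\lambda} k$, each summand becomes precisely the module $k\Sigma_n \tensorover{k(\prod_\alpha \times \prod_\beta)} k\Sigma_l \tensorover{k\Sigma_\lambda} k$ studied in Proposition~\ref{prop: partition algebras}.

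Finally I would invoke Proposition~\ref{prop: partition algebras} for each fixed class $v$: it identifies the corresponding summand with a direct sum $\bigoplus_{i=1}^{q} (k\Sigma_n \tensorover{k\Sigma_{\nu^i}} k)$ of $k\Sigma_n$-permutation modules and, moreover, exhibits it as an object of $\mathcal{F}_n(S)$. Collecting these over all classes $v \in V_n^l/_\sim$ then displays $res_n Ind_l M^\lambda$ as a direct sum of permutation modules, which is the first claim; and since $\mathcal{F}_n(S)$ is closed under direct sums, the same module lies in $\mathcal{F}_n(S)$, giving the dual Specht filtration.

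I do not expect any genuine obstacle here, as all the technical content — the bimodule decomposition of $e_n(A/J_{n-1})e_l$ and the combinatorial identification of its summands with induced modules over symmetric groups — is already carried out in the cited results. The only point that needs a moment's care is the bookkeeping of the \emph{left} $k\Sigma_n$-module structure: one must check that the bimodule decomposition of Lemma~\ref{lemma: arxiv Uv} is compatible with tensoring by $M^\lambda$ on the \emph{right} over $k\Sigma_l$, but this is immediate from additivity of the tensor functor. Thus the corollary is really a direct assembly of the three preceding results.
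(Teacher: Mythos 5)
Your proposal is correct and takes essentially the same route as the paper, whose proof is precisely the assembly of Lemma \ref{lemma: arxiv Uv}, Lemma \ref{lemma: tensor decomposition of Uv} and Proposition \ref{prop: partition algebras} into the decomposition $res_nInd_lM^\lambda \simeq \bigoplus_{v \in V_n^l/_\sim} \bigoplus_{i=1}^{q(v)} \bigl(k\Sigma_n \tensorover{k\Sigma_{\nu^i(v)}} k\bigr)$. The one small point you omit is the degenerate case $n > l$, which the paper dispatches first by noting $res_nInd_lM^\lambda = 0$; your argument (like the lemmas it cites, which fix $0 \leq n \leq l \leq r$) implicitly assumes $n \leq l$, but the missing case is trivial.
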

 
\begin{proof}
For $n > l$, we have $res_nInd_lM^\lambda = 0$, so the statement is true. For $n \leq l$, we can apply Lemmas \ref{lemma: arxiv Uv}, \ref{lemma: tensor decomposition of Uv} and Proposition \ref{prop: partition algebras} to get a decomposition $$res_nInd_lM^\lambda \simeq \bigoplus\limits_{v \in V_n^l/_\sim} \bigoplus\limits_{i=1}^{q(v)}( k\Sigma_n \tensorover{k\Sigma_{\nu^i(v)}}k) \in \mathcal{F}_n(S).$$
\end{proof}

This shows that assumption (\ref{assumption: restriction of pm}) is satisfied and we can conclude the following theorem. 

\begin{thm}\label{thm: partition algebras}
Let $r \in \mathbb{N}$ and let $k$ be an algebraically closed field and let $\text{\textnormal{char}}k$ be zero or at least $\max\{5,\lfloor \frac{r}{3} \rfloor\}$. Then the partition algebra $P_k(r,\delta)$, with $\delta \neq 0$, has permutation modules $M(l,\lambda)$, which are a direct sum of indecomposable Young modules. The Young modules are relative projective covers of the cell modules $ind_lS_\lambda$. Every module admitting a cell filtration has well-defined filtration multiplicities. 
\end{thm}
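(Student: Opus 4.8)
The plan is to verify that the partition algebra $P_k(r,\delta)$ with $\delta \neq 0$ satisfies the hypotheses of the abstract machinery already developed, namely assumptions (\ref{assumption: J_ne_l}) through (\ref{assumption: restriction of cell modules}) together with the characteristic condition, and then simply quote Theorems \ref{thm: cell filtration}, \ref{thm: relative projective}, \ref{main thm} and Corollary \ref{thm: HHKP 13.1}. Most of the work has in fact been carried out in the preceding discussion of the subsection, so the theorem is essentially an assembly of those verifications. First I would recall that $P_k(r,\delta)$ is cellularly stratified by \cite[Proposition 2.6]{HHKP} with input algebras $k\Sigma_l$, and that the explicit embedding $k\Sigma_l \hookrightarrow e_lAe_l$ described just above (via the diagram $d(\pi)$) exhibits each $B_l=k\Sigma_l$ as a subalgebra of $e_lAe_l$, so the setup of Subsection \ref{subsec: setup} applies.

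Next I would collect the four assumptions. Assumption (\ref{assumption: J_ne_l}) holds because the left and right $k\Sigma_l$-actions permute dots within a row without altering the number of propagating parts, so the vector-space decompositions $e_lAe_l \simeq k\Sigma_l \oplus e_lJ_{l-1}e_l$ and $J_ne_l \simeq J_{n-1}e_l \oplus (J_n/J_{n-1})e_l$ are genuinely decompositions of $k\Sigma_l$-(bi)modules. Assumption (\ref{assumption: aufplustern}) follows from the identification $(J_n/J_{n-1})e_l \simeq k\Sigma_n \otimes_k V_n \otimes_k V_n^l$ as vector spaces, which is condition (II$'$), together with Lemma \ref{lemma: replace assumption}. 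Assumption (\ref{assumption: restriction of cell modules}) is precisely \cite[Theorem 1]{arxiv}, valid once $\text{\textnormal{char}}\,k > \lfloor r/3 \rfloor$ (or characteristic zero). Assumption (\ref{assumption: restriction of pm}) is the content of the Corollary following Proposition \ref{prop: partition algebras}: by Lemmas \ref{lemma: arxiv Uv}, \ref{lemma: tensor decomposition of Uv} and Proposition \ref{prop: partition algebras} the layer restriction $res_nInd_lM^\lambda$ decomposes as a direct sum of permutation modules over $\Sigma_n$, hence lies in $\mathcal{F}_n(S)$.

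With all four assumptions in hand, and imposing $\text{\textnormal{char}}\,k \in \{0\} \cup \{p : p \geq \max\{5, \lfloor r/3 \rfloor\}\}$ so that both the standard-system hypothesis of Lemma \ref{lemma: standard system} (which rules out $p=2,3$) and the restriction-of-cell-modules hypothesis $p > \lfloor r/3 \rfloor$ are simultaneously met, I would invoke the general theorems directly. Theorem \ref{thm: cell filtration} and the closure of $\mathcal{F}(\Theta)$ under summands give that $M(l,\lambda)$ and its summands admit cell filtrations; Theorem \ref{thm: relative projective} gives that $M(l,\lambda)$ and its summands are relative projective in $\mathcal{F}(\Theta)$ and that $Y(l,\lambda)$ is the relative projective cover of $\Theta(l,\lambda)=ind_lS_\lambda$; Theorem \ref{main thm} gives the decomposition of $M(l,\lambda)$ into indecomposable Young modules; and Corollary \ref{thm: HHKP 13.1}(1) gives well-defined filtration multiplicities for every cell-filtered module. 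This yields exactly the three assertions of the statement.

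The only genuinely delicate point, and the one I would flag explicitly, is the bookkeeping on the characteristic. The abstract theorems demand $p \neq 2,3$ through Lemma \ref{lemma: standard system}, while \cite[Theorem 1]{arxiv} demands $p > \lfloor r/3 \rfloor$; the hypothesis $\text{\textnormal{char}}\,k \geq \max\{5,\lfloor r/3 \rfloor\}$ is precisely the smallest bound that enforces both at once (the constant $5$ being needed only to exclude $2$ and $3$ when $r$ is small), so I would make sure the statement's threshold is chosen to cover both constraints rather than just one. Everything else is a routine invocation of results proved earlier in the paper, so no genuinely new argument is required here — the substance lies entirely in the verification of assumption (\ref{assumption: restriction of pm}) carried out in Proposition \ref{prop: partition algebras}.
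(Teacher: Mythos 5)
Your proposal is correct and follows essentially the same route as the paper: the paper's proof of Theorem \ref{thm: partition algebras} consists precisely of the verifications carried out in Subsection \ref{sec: applications partition algebras} — assumption (\ref{assumption: J_ne_l}) via invariance of the number of propagating parts under the $k\Sigma_l$-action, assumption (\ref{assumption: aufplustern}) via (II$'$) and Lemma \ref{lemma: replace assumption}, assumption (\ref{assumption: restriction of cell modules}) via \cite[Theorem 1]{arxiv}, and assumption (\ref{assumption: restriction of pm}) via Lemmas \ref{lemma: arxiv Uv}, \ref{lemma: tensor decomposition of Uv} and Proposition \ref{prop: partition algebras} — followed by the same invocation of the general theorems, with the same reconciliation of the characteristic bounds that you flag. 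Nothing in your argument deviates from or falls short of the paper's own proof.
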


\bibliographystyle{alpha}
\bibliography{referencesFinal}

\appendix
\renewcommand{\thesection}{\Roman{section}}

\section{Example for Proposition \ref{prop: partition algebras}, calculated by hand}\label{appendix: example}

\begin{ex} Let $v= \xymini{\circ & \circ & \circ & \circ \tra[r] & \circ & \circ \tra[r] & \circ & \bullet \tra[r] & \bullet} \in V_5^9$. The summand $U_v$ of $e_5(A/J_4)e_9 \tensorover{k\Sigma_{(7,2)}} k$ is isomorphic to $$(k\Sigma_5 \tensorover{k\Sigma_{(3,2)}} k)^2 \oplus (k\Sigma_5 \tensorover{k\Sigma_{(3,1^2)}} k)^2 \oplus (k\Sigma_5 \tensorover{k\Sigma_{(2^2,1)}} k)^2 \oplus (k\Sigma_5 \tensorover{k\Sigma_{(2,1^3)}}k).$$ 

This can be verified as follows. We have $\prod_\alpha \times \prod_\beta = \Sigma_3 \times (\Sigma_2 \times \Sigma_2) \times \Sigma_2$ and the set of double coset representatives is $$\{id, (7\, 8), (6\, 8)(7\, 9), (5\, 8\, 6)(7\, 9), (3\, 8\, 7\, 6\, 5\, 4), (3\, 8\, 6\,5 \, 4)(7\,9), (2\,8\,6\,4)(3\,9\,7\,5)\}.$$ 

The only transpositions in $\Sigma_\lambda=\Sigma_{(7,2)}$ leaving $v\pi$ invariant are those with both end points belonging to the same part $\lambda_i$. In the partial diagram $v\pi$, mark these dots as $\ast$ if they were labelled. The only products of two disjoint transpositions $(a\, b)(c\, d)$ leaving $v\pi$ invariant are those where $a$ and $c$ (or $a$ and $d$) belong to the same part $\lambda_i$ and $b$ and $d$ (or $b$ and $c$, respectively) belong to the same part $\lambda_j$. Note that here, $\lambda_i =\lambda_j$ is possible\footnote{In this case, the transpositions $(a\, b)$ and $(c\, d)$ belong to the first group of dots ($\ast$ or $\star$) as well.}. Mark these dots as $\diamond$ if they were labelled. Put vertical lines at the end of each part $\lambda_i$. Translate this back to $v=v\pi\pi^{-1}$. We can read off $\Pi_\nu = \prod_\alpha \cap \pi \Sigma_\lambda \pi^{-1}$ from the given information by the labelling of dots: $\ast$s  of the same size become symmetric groups, $\diamond$s become wreath products, if both end points $a,b$ lie in the same part $\lambda_i$ in $v\pi$ and the group generated by $(a\, b)(c\, d)$ otherwise\footnote{It does not make a difference for $\Sigma_\nu$ which of the two cases we have, since the projection onto $\Sigma_n$ is the same.}. We do this for each double coset representative in Table \ref{table}.

\begin{table}[b]\caption{Diagrammatic deduction of Young subgroups $\Sigma_\nu$.}\label{table}
\begin{tabular}{| >{$}c<{$} | >{$}c<{$} | >{$}c<{$} | >{$}c<{$} | >{$}c<{$} |}
\hline
 \multirow{2}{*}{$\pi_i$} & v\pi_i & \multirow{2}{*}{$\Pi_\nu$} & \multirow{2}{*}{$\Sigma_\nu$} \\ & v &&\\ \hline\hline 
 \multirow{2}{*}{1} & \xytiny{\ast & \ast & \ast & \diamond \tra[r] & \diamond & \diamond \tra[r] & \diamond \hspace{1ex}\vert \hspace{-1ex} & \bullet \tra[r] & \bullet}  
 & \multirow{2}{*}{$\Sigma_3 \times (\Sigma_2\wr \Sigma_2) = \prod_\alpha$} & \multirow{2}{*}{$\Sigma_{(3,2)}$} \\ 
 & \xytiny{\ast & \ast & \ast & \diamond \tra[r] & \diamond & \diamond \tra[r] & \diamond \hspace{1ex}\vert \hspace{-1ex} & \bullet \tra[r] & \bullet} && \\\hline
 \multirow{2}{*}{$(7\,8)$} & 
\xytiny{\ast & \ast & \ast & \ast \tra[r] & \ast & \circ \arcu[rr] & \bullet \arcd[rr]\hspace{1ex}\vert \hspace{-1ex} & \circ & \bullet} & 
\multirow{2}{*}{$\Sigma_{(3,2,1^2)}$} & \multirow{2}{*}{$\Sigma_{(3,1^2)}$} \\ 
&\xytiny{\ast & \ast & \ast & \ast \tra[r] & \ast & \circ \tra[r] & \circ \hspace{1ex}\vert \hspace{-1ex} & \bullet \tra[r] & \bullet} & & \\\hline
\multirow{2}{*}{$(6\, 8)(7\,9)$} & 
\xytiny{\ast & \ast & \ast & \ast \tra[r] & \ast & \bullet \tra[r] & \bullet \hspace{1ex}\vert \hspace{-1ex} & \ast \tra[r] & \ast} & 
\multirow{2}{*}{$\Sigma_{(3,2^2)}$} & \multirow{2}{*}{$\Sigma_{(3,1^2)}$} \\ 
&\xytiny{\ast & \ast & \ast & \ast \tra[r] & \ast & \ast \tra[r] & \ast \hspace{1ex}\vert \hspace{-1ex} & \bullet \tra[r] & \bullet} & & \\ \hline
\multirow{2}{*}{$(5\, 8\, 6)(7\,9)$} & 
\xytiny{\ast & \ast & \ast & \diamond \arcd[rrrr] & \diamond \arcu[rrrr] & \bullet \tra[r] & \bullet \hspace{1ex}\vert \hspace{-1ex} & \diamond & \diamond} & 
\multirow{2}{*}{$\Sigma_3 \times \langle (4\, 6)(5\, 7)\rangle$} & \multirow{2}{*}{$\Sigma_{(3,2)}$} \\
&\xytiny{\ast & \ast & \ast & \diamond \tra[r] & \diamond & \diamond \tra[r] & \diamond \hspace{1ex}\vert \hspace{-1ex} & \bullet \tra[r] & \bullet} & & \\  \hline
 \multirow{2}{*}{$(3\, 8\, 7\, 6\, 5\, 4)$} & 
\xytiny{\ast & \ast & \diamond \tra[r] & \diamond & \diamond \tra[r] & \diamond  & \bullet \arcu[rr] \hspace{1ex}\vert \hspace{-1ex} & \circ  & \bullet} & 
\multirow{2}{*}{$\Sigma_{(2,1)} \times (\Sigma_2 \wr \Sigma_2)$} & \multirow{2}{*}{$\Sigma_{(2,1,2)}$} \\ 
&\xytiny{\ast & \ast & \circ & \diamond \tra[r] & \diamond & \diamond \tra[r] & \diamond \hspace{1ex}\vert \hspace{-1ex} & \bullet \tra[r] & \bullet} & &\\ \hline
 \multirow{2}{*}{$(3\, 8 \, 6\, 5\, 4)(7\,9)$} & 
\xytiny{\ast & \ast & \ast \tra[r] & \ast & \circ \arcu[rrrr] & \bullet \tra[r] & \bullet \hspace{1ex}\vert \hspace{-1ex} & \circ \tra[r] & \circ} & 
\multirow{2}{*}{$\Sigma_{(2,1,2,1^2)}$} & \multirow{2}{*}{$\Sigma_{(2,1^3)}$} \\ 
&\xytiny{\ast & \ast & \circ & \ast \tra[r] & \ast & \circ \tra[r] & \circ \hspace{1ex}\vert \hspace{-1ex} & \bullet \tra[r] & \bullet} & &\\ \hline
 \multirow{2}{*}{$(2\, 8\, 6\, 4)(3\, 9\, 7\, 5)$} & 
\xytiny{\circ & \diamond \tra[r] & \diamond & \diamond \tra[r] & \diamond & \bullet \tra[r] & \bullet \hspace{1ex}\vert \hspace{-1ex} & \ast & \ast} & 
\multirow{2}{*}{$\Sigma_{(1,2)}\times (\Sigma_2 \wr \Sigma_2)$} & \multirow{2}{*}{$\Sigma_{(1,2^2)}$} \\ 
&\xytiny{\circ & \ast & \ast & \diamond \tra[r] & \diamond & \diamond \tra[r] & \diamond \hspace{1ex}\vert \hspace{-1ex} & \bullet \tra[r] & \bullet} & &\\ \hline
 \end{tabular}
\end{table}
\end{ex}

\section{GAP code to compute summands of restriction of permutation modules for partition algebras}\label{appendix: code}

For a given summand $U_v$ of $e_n(A/J_{n-1})e_l$, the following GAP code calculates which Young subgroups $\Sigma_{\nu^i}$ appear in the decomposition of ${k\Sigma_n \tensorover{k(\prod_\alpha \times \prod_\beta)} k\Sigma_l \tensorover{k\Sigma_\lambda} k} \simeq {U_v \tensorover{k\Sigma_\lambda} k }$, given in Proposition \ref{prop: partition algebras}.  

As input, we need \texttt{G}$=\Sigma_l$, \texttt{H}$=\prod_\alpha \times \prod_\beta$ and \texttt{K}$=\Sigma_\lambda$, as well as the list \texttt{imgs} of images of the generators of $H$ under the canonical epimorphism $\prod_\alpha \times \prod_\beta \twoheadrightarrow \Sigma_n$, sending $\zeta$ to $\Pi(d\zeta)$. We state the code for the example in Appendix \ref{appendix: example}.\\

\noindent INPUT: \texttt{S2:=SymmetricGroup(2); S3:=SymmetricGroup(3); \newline
 S5:=SymmetricGroup(5); S7:=SymmetricGroup(7);} \qquad\qquad \# abbreviations 

\noindent\texttt{G:= SymmetricGroup(14); H:=DirectProduct(S3,WreathProduct(S2,S2),S2); K:=DirectProduct(S7,S2);} \qquad\qquad\qquad\qquad \# $G=\Sigma_l$, $H=\prod_\alpha \times \prod_\beta$, $K=\Sigma_\lambda$.\\

\noindent\texttt{gens:=GeneratorsOfGroup(H); \newline
imgs:=[(1,2,3),(1,2),(),(),(4,5),()];} \quad 
\begin{minipage}{5.5cm}\# to each generator \texttt{gens[i]}, set \newline \# \texttt{imgs[i]}:=the image of \texttt{gens[i]} \# under the canonical epimorphism \# $\prod_\alpha\times\prod_\beta \twoheadrightarrow \Sigma_5$. \end{minipage}\newline
\texttt{hom:=GroupHomomorphismByImages(H,S5,gens,imgs); }\\

\noindent\texttt{iso=function(G,H,K)} \newline
\texttt{local L, r, R, Pinu, Snu;} \newline
\texttt{L:=[]; R:=List(DoubleCosets(G,H,K),Representative);}\newline
\texttt{for r in R do}\newline
\texttt{Pinu$:=$Intersection(H,ConjugateSubgroup(K,r\textasciicircum-1));}\newline
\texttt{Snu$:=$Image(hom,Pinu);}\newline
\texttt{Add(L,Snu);}\newline
\texttt{od;}\newline
\texttt{return L;}\newline
\texttt{end;}\newline

\noindent OUTPUT: list \texttt{L} of all appearing Young subgroups $\Sigma_{\nu^i}=$\texttt{Snu} of $\Sigma_5=$\texttt{S5}.

\end{document}